\numberwithin{equation}{section}
\theoremstyle{plain}
\newtheorem{theorem}{Theorem}[section]
\newtheorem{proposition}[theorem]{Proposition}
\newtheorem{lemma}[theorem]{Lemma}
\newtheorem{corollary}[theorem]{Corollary}
\theoremstyle{definition}
\newtheorem{definition}[theorem]{Definition}
\theoremstyle{remark}
\newtheorem*{remark*}{Remark}
\renewcommand{\emptyset}{\varnothing}
\newcommand{\N}{\mathbb{N}}
\newcommand{\Z}{\mathbb{Z}}
\newcommand{\R}{\mathbb{R}}
\newcommand{\CC}{\mathbb{C}}
\newcommand{\EE}{\mathbb{E}}
\newcommand{\PP}{\mathbb{P}}
\newcommand{\CalO}{\mathcal{O}}
\newcommand{\eps}{\varepsilon}
\newcommand{\FirstN}[1]{\underline{#1}}
\newcommand{\indicator}{\mathds{1}}
\newcommand{\supp}{\operatorname{supp}}
\newcommand{\Fourier}{\mathcal{F}}
\newcommand{\Schwartz}{\mathcal{S}}
\newcommand{\linspan}{\operatorname{span}}
\newcommand{\Alg}{\mathrm{Alg}}
\newcommand{\lebesgue}{\boldsymbol{\lambda}}
\newcommand{\n}{\mathbf{n}}
\newcommand{\m}{\mathbf{m}}
\newcommand{\X}{\mathbf{X}}
\newcommand{\avsum}{\mathop{\mathpalette\avsuminner\relax}\displaylimits}
\newcommand\avsuminner[2]{%
  {\sbox0{$\m@th#1\sum$}%
   \vphantom{\usebox0}%
   \ooalign{%
     \hidewidth
     \smash{\vrule height\dimexpr\ht0+1pt\relax depth\dimexpr\dp0+1pt\relax}%
     \hidewidth\cr
     $\m@th#1\sum$\cr
   }%
  }%
}
\newcommand{\footremember}[2]{%
    \footnote{#2}
    \newcounter{#1}
    \setcounter{#1}{\value{footnote}}%
}
\newcommand{\subjclass}[2][2020]{%
  \let\@oldtitle\@title%
  \gdef\@title{\@oldtitle\footnotetext{#1 \emph{Mathematics subject classification.} #2}}%
}
\newcommand{\keywords}[1]{%
  \let\@@oldtitle\@title%
  \gdef\@title{\@@oldtitle\footnotetext{\emph{Key words and phrases.} #1.}}%
}
\begin{document}
\title{$L^p$ sampling numbers\\ for the Fourier-analytic Barron space}
\author{
Felix Voigtlaender\footremember{KURML}{
KU Eichstätt--Ingolstadt, Mathematisch--Geographische Fakultät,
Lehrstuhl Reliable Machine Learning,
Ostenstraße 26,
85072 Eichstätt,
Germany.
Email: \href{mailto:felix.voigtlaender@ku.de}{\texttt{felix.voigtlaender@ku.de}}
}%
}
\keywords{Sampling;
Sampling numbers;
Rate of convergence;
Barron functions;
Curse of dimensionality;
Information based complexity}
\subjclass{94A20, 41A46, 46E15, 42B35, 41A25, 65D15, 41A63}

\maketitle

\begin{abstract}
  In this paper, we consider Barron functions $f : [0,1]^d \to \R$ of smoothness $\sigma > 0$,
  which are functions that can be written as
  \[
    f(x) = \int_{\R^d} F(\xi) \, e^{2 \pi i \langle x, \xi \rangle} \, d \xi
    \quad \text{with} \quad
    \int_{\R^d} |F(\xi)| \cdot (1 + |\xi|)^{\sigma} \, d \xi < \infty.
  \]
  For $\sigma = 1$, these functions play a prominent role in machine learning,
  since they can be efficiently approximated by (shallow) neural networks
  without suffering from the curse of dimensionality.

  For these functions, we study the following question:
  Given $m$ point samples $f(x_1),\dots,f(x_m)$ of an unknown Barron function $f : [0,1]^d \to \R$
  of smoothness $\sigma$, how well can $f$ be recovered from these samples,
  for an optimal choice of the sampling points and the reconstruction procedure?
  Denoting the optimal reconstruction error measured in $L^p$ by $s_m (\sigma; L^p)$,
  we show that
  \[
    m^{- \frac{1}{\max \{ p,2 \}} - \frac{\sigma}{d}}
    \lesssim s_m(\sigma;L^p)
    \lesssim (\ln (e + m))^{\alpha(\sigma,d) / p}
             \cdot m^{- \frac{1}{\max \{ p,2 \}} - \frac{\sigma}{d}}
    ,
  \]
  where the implied constants only depend on $\sigma$ and $d$ and where
  $\alpha(\sigma,d)$ stays bounded as $d \to \infty$.
\end{abstract}

\section{Introduction}%
\label{sec:Introduction}

In \cite{BarronUniversalApproximationBoundsForSigmoidal,BarronApproximationAndEstimationBoundsForNN},
Andrew Barron introduced a certain function class---nowadays called the class of Barron functions---%
which has the fascinating property that Barron functions can be well approximated
(without suffering from the curse of dimensionality) by \emph{non-linear} function classes
like neural networks, whereas any linear method of approximating Barron functions
will be subject to the curse of dimensionality \cite{BarronUniversalApproximationBoundsForSigmoidal}.

Specifically, a function $f : [0,1]^d \to \R$ is called a Barron function if there exists
$F : \R^d \to \CC$ satisfying
\begin{equation}
  f(x) = \int_{\R^d} F(\xi) \, e^{2 \pi i \langle x, \xi \rangle} \, d \xi
  \qquad \forall \, x \in [0,1]^d
  \label{eq:IntroductionBarronRepresentation}
\end{equation}
and $C_f := \int_{\R^d} |F(\xi)| \cdot (1 + |\xi|) \, d \xi < \infty$.
For such a function, Barron showed in \cite{BarronUniversalApproximationBoundsForSigmoidal}
that for each $n \in \N$, there exists a shallow neural network $\Phi_n$
with a sigmoidal activation function and $n$ neurons satisfying
$\| f - \Phi_n \|_{L^2} \lesssim C_f \cdot n^{-1/2}$, where it is noteworthy that the
rate of decay of the approximation error is independent of the input dimension $d$.

As a consequence, in \cite{BarronApproximationAndEstimationBoundsForNN} Barron established
a learning bound for Barron functions.
More precisely, he showed that there is a reconstruction procedure based on neural networks,
such that given $m$ \emph{random} point samples $f(x_1),\dots,f(x_m)$ where $x_1,\dots,x_m$
are independent and uniformly distributed in $[0,1]^d$, then the resulting reconstruction
$\widetilde{f} = \widetilde{f}_{x_1,\dots,x_m}$ satisfies
$\EE \| f - \widetilde{f} \|_{L^2} \lesssim \bigl( \ln(m) / m \bigr)^{1/4}$, where the expectation is
with respect to the choice of the sampling points $x_1,\dots,x_m$.
Similar results with a worse asymptotic scaling for $m \to \infty$ but with
better dependence of the implied constant on $d$ were recently derived
in \cite{KlusowskiBarronHighDimensionalRidgeFunctionCombinations}.

As for the approximation result, an important property of these learning bounds is
that the resulting asymptotic decay rate of the reconstruction error
is \emph{independent of the ambient dimension}.
Nevertheless, as recently noted in \cite[Page~23]{DeVoreOptimalLearning},
the result in \cite{BarronApproximationAndEstimationBoundsForNN} leaves open
the following question, which---once resolve---has important implications for the
optimal learning of Barron functions:
\begin{equation}
  \tag{$\ast$}\label{eq:IntroductionMainQuestion}
  \parbox{\dimexpr\linewidth-4em}{%
    \strut
    If one is free to choose the sampling points $x_1,\dots,x_m \in [0,1]^d$
    and the reconstruction method, what is the optimal behavior of the worst case
    reconstruction error as $m \to \infty$?
    \strut
  }
\end{equation}

In this paper, we resolve this question, by deriving upper and lower
bounds for the so-called \emph{sampling numbers} of the Barron space.
The derived bounds are sharp up to logarithmic factors.

Before we dive into the precise results, let us define the precise notion
of Barron functions and sampling numbers that we will use.

\subsection{Definition of the Barron space and sampling numbers}%
\label{sub:IntroductionDefinitions}

In this paper, we will study a certain generalization of the Barron spaces
that was recently introduced in \cite{KlusowskiBarronHighDimensionalRidgeFunctionCombinations}.
The original Barron space studied in
\cite{BarronUniversalApproximationBoundsForSigmoidal,BarronApproximationAndEstimationBoundsForNN}
corresponds to choosing $\sigma = 1$.

\begin{definition}\label{def:BarronFunctions}
  Let $\sigma > 0$ and let $\Omega \subset \R^d$ be arbitrary.
  We say that $f : \Omega \to \R$ is a \emph{Barron function} with smoothness parameter $\sigma$
  (written $f \in B^\sigma (\Omega)$), if there exists a measurable function $F : \R^d \to \CC$
  satisfying
  \(
    \int_{\R^d}
      (1 + |\xi|)^\sigma \, |F(\xi)|
    \, d \xi
    < \infty
  \)
  and
  \begin{equation}
    \forall \, x \in \Omega: \qquad
      f(x) = \int_{\R^d}
               e^{2 \pi i \langle x, \xi \rangle} \, F(\xi)
             \, d \xi
    .
    \label{eq:BarronRepresentation}
  \end{equation}
  We then define the \emph{Barron norm} of $f$ as
  \[
    \| f \|_{B^\sigma (\Omega)}
    := \inf
       \bigg\{
         \int_{\R^d} (1 + |\xi|)^\sigma |F(\xi)| \, d \xi
         \,\,\colon\,\,
         \text{\Cref{eq:BarronRepresentation} holds}
       \bigg\}
    .
  \]
\end{definition}

\begin{remark*}
  To simplify some notations, in the following we will work on the unit cube
  $[-\frac{1}{2}, \frac{1}{2}]^d$ instead of on $[0,1]^d$.
  Since the Barron space is translation invariant (see e.g.\ \Cref{lem:TranslationDilation})
  this does not change the results.
\end{remark*}

The question \eqref{eq:IntroductionMainQuestion} from above
is formalized by the following definition of \emph{sampling numbers}.
For more general properties of such sampling numbers,
we refer to \cite[Sections~2 and 3]{HeinrichRandomApproximation}.

\begin{definition}\label{def:SamplingNumbers}
  Let $\Omega \subset \R^d$ be arbitrary and let $V \hookrightarrow C(\Omega)$ be a normed vector space.
  Furthermore, let $R$ (the ``reconstruction space'') be a normed vector space of measurable
  functions on $\Omega$ for which $V \hookrightarrow R$.

  For $m \in \N$, the $m$-th \emph{(non-linear) sampling number} of $V$ is defined as
  \[
    s_m (V; R)
    := \inf_{x_1,\dots,x_m \in \Omega} \,\,
         \inf_{T : \R^m \to R} \,\,
           \sup_{f \in V, \| f \|_V \leq 1} \,\,
             \| f - T(f(x_1),\dots,f(x_m)) \|_{R}
    ,
  \]
  where the function $T : \R^m \to R$ can be non-linear.
  The \emph{linear sampling number} $s_m^{\mathrm{lin}}(V; R)$ is defined similarly,
  but assuming $T$ to be linear.
\end{definition}

Strictly by definition, having a small sampling number just means that functions
in the unit ball of $V$ can be well recovered via samples.
The following result shows that then actually \emph{any} function $f \in V$
can be well reconstructed (compared to its norm).
This is probably well-known in the information-based complexity literature,
but since we were not able to locate a convenient reference,
we provide the proof in \Cref{sub:ReconstructionOnWholeSpaceProof}.

\begin{lemma}\label{lem:ReconstructionOnWholeSpace}
  For any $m \!\in\! \N$ and $\delta \!>\! 0$, there exist sampling points $x_1,...,x_m \!\in\! \Omega$
  and a map $A : \R^m \to V$ satisfying
  \[
    \big\| f - A(f(x_1),\dots,f(x_m)) \big\|_R
    \leq (4 + \delta) \cdot s_m(V; R) \cdot \| f \|_{V}
    \qquad \forall \, f \in V.
  \]
\end{lemma}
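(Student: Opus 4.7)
The plan is to leverage the near-optimal reconstruction $T:\R^m\to R$ guaranteed by the definition of $s_m(V;R)$ and post-process it into a map $A:\R^m\to V$. Since we are free to choose any (not necessarily measurable) map, a natural candidate is to send a sample vector $y\in\R^m$ to an approximate norm-minimizer among all $V$-interpolants $h\in V$ with $h(x_i)=y_i$. This extra step will cost only a factor of $2$ in the final estimate, so combined with the factor of $2$ used to bound the sample-free part we arrive at the constant $4+\delta$.

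Concretely, write $s:=s_m(V;R)$. Given $\delta>0$, I would first pick auxiliary $\eta,\eta'>0$ (to be chosen small) and, using \Cref{def:SamplingNumbers}, select sampling points $x_1,\dots,x_m\in\Omega$ together with a map $T:\R^m\to R$ satisfying $\sup_{\|f\|_V\le 1}\|f-T(f(x_1),\dots,f(x_m))\|_R\le s+\eta'$. Next, for $y\in\R^m$ set $c(y):=\inf\{\|h\|_V:h\in V,\ h(x_i)=y_i\text{ for all }i\}\in[0,+\infty]$ and define $A(y)\in V$ by invoking the axiom of choice to pick any element with $A(y)(x_i)=y_i$ and $\|A(y)\|_V\le(1+\eta)\,c(y)$ whenever $c(y)\in(0,\infty)$. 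The case $c(y)=0$ forces $y=0$ via the embedding $V\hookrightarrow C(\Omega)$, so I set $A(0):=0$; for $c(y)=\infty$ the value of $A(y)$ is irrelevant for the bound and may be chosen arbitrarily.

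For $f\in V$ with samples $y_i=f(x_i)$, one has $c(y)\le\|f\|_V$, so $h:=f-A(y)$ lies in $V$ with vanishing samples and $\|h\|_V\le(2+\eta)\|f\|_V$. The core estimate is that any $h\in V$ with $h(x_i)=0$ for all $i$ satisfies $\|h\|_R\le 2(s+\eta')\|h\|_V$: when $\|h\|_V>0$, normalize to $\tilde h:=h/\|h\|_V$, note that $\tilde h$ lies in the unit ball of $V$ and has vanishing samples, and apply the triangle inequality to $\tilde h=(\tilde h-T(0))+T(0)$, using that both $\|\tilde h-T(0)\|_R\le s+\eta'$ and $\|T(0)\|_R=\|0-T(0)\|_R\le s+\eta'$ (the latter from applying the guarantee of $T$ to $f\equiv 0$). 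Combining, $\|f-A(y)\|_R\le 2(2+\eta)(s+\eta')\|f\|_V$, which is at most $(4+\delta)\,s\,\|f\|_V$ once $\eta,\eta'$ are chosen small enough in terms of $\delta$ and $s$.

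The argument is essentially algebraic; I expect the main source of friction to be the bookkeeping around degenerate cases. Specifically, the multiplicative slack $(1+\eta)$ is needed because $c(y)$ need not be attained, the separate treatment of $c(y)=0$ relies on the continuous-function embedding, and the degenerate case $s=0$ (where one cannot absorb $\eta'$ into a multiple of $s$) requires a minor separate argument or a slight reformulation. The key structural idea---using $T(0)$ as a pivot to eliminate the unknown $\|h\|_V$ from the estimate by the triangle inequality---is elementary, but it is what gives the factor $4$ rather than a much worse constant.
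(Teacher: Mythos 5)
Your proposal is correct and takes essentially the same approach as the paper: both select an approximate $V$-norm minimizer among interpolants of the sample vector, then bound the remainder (which has vanishing samples) by normalizing and applying the triangle inequality through $T(0)$. The paper also handles the $c(y)=0$ case via the embedding $V\hookrightarrow C(\Omega)$ exactly as you do, and its closing step ``since $\delta>0$ is arbitrary'' glosses over the same $s_m(V;R)=0$ degeneracy you flag.
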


\subsection{Main result}%
\label{sub:IntroductionMainResult}

Our main result is to provide (essentially) sharp bounds for the sampling numbers of the Barron space
$B^{\sigma}(\Omega)$, as stated in the following theorem.

\begin{theorem}\label{thm:IntroductionMainResult}
  Let $d \in \N$ and $\sigma \in (0,\infty)$, and define $\Omega := [-\frac{1}{2}, \frac{1}{2}]^d$.
  Then there exist constants $C_1 = C_1 (d,\sigma) > 0$ and $C_2 = C_2(d,\sigma) > 0$
  satisfying for any $p \in [1,\infty]$ and $m \in \N$ that
  \[
    s_m \bigl( B^\sigma(\Omega); L^p(\Omega) \bigr)
    \leq \begin{cases}
           C_1 \cdot \big( [\ln(e + m)]^4 \big/ m \big)^{\frac{1}{2} + \frac{\sigma}{d}} ,
           & \text{if } p \in [1,2], \\[0.2cm]
           C_1 \cdot [\ln(e + m)]^{(4 + 8 \frac{\sigma}{d}) / p} \cdot m^{-(\frac{1}{p} + \frac{\sigma}{d})} ,
           & \text{if } p \in [2,\infty]
         \end{cases}
  \]
  and
  \[
    s_m \bigl( B^\sigma(\Omega); L^p(\Omega) \bigr)
    \geq \begin{cases}
           C_2 \cdot m^{-(\frac{1}{2} + \frac{\sigma}{d})},
           & \text{if } p \in [1,2], \\[0.2cm]
           C_2 \cdot m^{-(\frac{1}{p} + \frac{\sigma}{d})},
           & \text{if } p \in [2,\infty] .
         \end{cases}
  \]
\end{theorem}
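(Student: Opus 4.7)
The theorem consists of matching upper and lower bounds, which I treat separately, with different arguments needed for $p \le 2$ and $p \ge 2$.

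For the \textbf{lower bound}, the plan is a disjoint-bumps pigeonhole construction, sharpened by a Khintchine random-sign argument when $p \le 2$. Fix a nonnegative bump $\phi \in C_c^\infty([-\tfrac12,\tfrac12]^d)$; for $M \in \N$ put $N := M^d$ and place scaled translates $\phi_j(x) := M^{-\sigma}\phi(M(x - c_j))$ on a $1/M$-grid $\{c_j\}$ in $\Omega$, so that the supports are pairwise disjoint. The translation and dilation scaling of the Barron norm (\Cref{lem:TranslationDilation}) gives $\|\phi_j\|_{B^\sigma(\Omega)} \lesssim 1$ uniformly in $j,M$, while disjointness yields $\|\phi_j\|_{L^p(\Omega)} \asymp N^{-\sigma/d - 1/p}$. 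For $p \in [2,\infty]$, take $N = m+1$: any $m$ sampling points miss at least one $\phi_{j^*}$, so $\pm \phi_{j^*}$ produce identical (zero) samples, forcing reconstruction error $\gtrsim \|\phi_{j^*}\|_{L^p} \asymp m^{-1/p - \sigma/d}$ on one of them. For $p \in [1,2]$, take $N \asymp m$ and form $g_\epsilon := N^{-1/2}\sum_j \epsilon_j \phi_j$; since $\|\cdot\|_{B^\sigma}$ is a weighted $L^1$-norm of the Fourier transform, Khintchine applied pointwise in $\xi$ gives $\EE_\epsilon \|g_\epsilon\|_{B^\sigma} \lesssim 1$, while disjoint support yields $\|g_\epsilon\|_{L^p} \asymp N^{-1/2 - \sigma/d}$ deterministically. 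A Markov plus cell-pigeonhole step then extracts two admissible sign patterns with matching samples but $L^p$-distance $\gtrsim m^{-1/2 - \sigma/d}$, which any reconstructor must incur on at least one.

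For the \textbf{upper bound}, my plan reduces $p \in [1,2]$ to the $p=2$ bound via the embedding $L^2 \hookrightarrow L^p$ (available because $|\Omega|=1$), and handles $p \in (2,\infty)$ by combining the $p=2$ and $p=\infty$ bounds on a single sampling set (interpolation-style). For the $p=\infty$ endpoint, Fourier-truncate $f = f_N + r_N$ with $\|r_N\|_{L^\infty} \le N^{-\sigma}\|f\|_{B^\sigma}$; the low-frequency piece $f_N$ lies in an $\sim\!N^d$-dimensional space of Fourier-supported functions, to which recent weighted least-squares sampling results (in the spirit of Krieg--Ullrich) apply, recovering $f_N$ from $m \gtrsim N^d \log^c N$ well-chosen random samples up to logarithmic factors; taking $N^d \sim m/\log^c m$ yields $\lesssim m^{-\sigma/d}$. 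For the $p=2$ endpoint, the extra $m^{-1/2}$ factor comes from a Monte Carlo discretization of the Barron representation: every $f \in B^\sigma$ is an expectation over a probability measure $\mu$ (derived from $F$) of Fourier atoms $(1+|\xi|)^{-\sigma} e^{2\pi i\langle \cdot,\xi\rangle}$, each bounded by $1$, whose $n$-atom Monte Carlo average approximates $f$ in $L^2$ at rate $n^{-1/2}$ by a standard variance bound. Balancing the Fourier truncation at $N^d \sim m/\log^c m$ with the Monte Carlo rate at $n \sim m$ and realizing the empirical mean via least-squares on the point samples of $f$ yields the target $m^{-1/2 - \sigma/d}$.

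The principal obstacle I anticipate is the $L^2$ upper bound. The Monte Carlo argument naturally draws atoms from the unknown measure $\mu$, whereas the reconstruction has access only to $m$ point values of $f$. Turning the approximation into a sampling bound requires constructing a sampling-independent dictionary that is small enough to be identifiable from $m$ point samples yet expressive enough to realize the $n^{-1/2}$ gain in $L^2$, followed by a de-randomization step to pass from probabilistic reconstruction guarantees to deterministic sampling points; reconciling these with the log exponent $4$ predicted by the theorem is, I expect, the place where the proof becomes technically delicate.
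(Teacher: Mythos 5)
Your lower bound plan is essentially the paper's: disjoint bumps on a grid, a sign-flipping argument on unsampled cells, and a $|\Lambda|^{1/2}$ gain for $p\le 2$. Two comments. First, you do not need Khintchine: since the translates $\phi_j$ have Fourier transforms of equal modulus, the $|\Lambda|^{1/2}$ factor already comes \emph{deterministically} from $\|\sum_j \epsilon_j e^{-2\pi i\langle c_j,\cdot\rangle}\|_{L^1([0,1]^d)}\le \|\cdot\|_{L^2}$; the paper exploits exactly this (\Cref{lem:BumpSumNormEstimate}). Second, your in-expectation bound is actually weaker than what you need: after Markov you control $\|g_\epsilon\|_{B^\sigma}$ for ``most'' $\epsilon$, but you then have to flip signs on the unsampled cells and need the \emph{flipped} pattern to also have small Barron norm, which does not follow from the expectation bound alone. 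The deterministic bound removes this difficulty and also lets the argument pass to randomized algorithms (as the paper does in \Cref{thm:LowerBound}); with the Khintchine route you would need additional work.

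The upper bound is where your proposal has a genuine gap, and you have correctly identified its location. Your Monte Carlo/Maurey step shows that $f$ is well \emph{approximated} by an $n$-atom empirical mean drawn from the Barron measure, but this gives no way to \emph{recover} those atoms from $m$ point values of $f$; you flag this but do not fill it. The paper's solution is qualitatively different and relies on three ingredients your sketch does not contain: (i) multiplying by a smooth cutoff $\varphi$ supported inside $\Omega$ so that $\varphi f$ is genuinely periodic on $\Omega$ and has an \emph{absolutely summable, weighted-$\ell^1$} discrete Fourier series, (ii) DeVore's result on nonlinear $n$-term trigonometric approximation to convert the weighted-$\ell^1$ coefficient bound into approximation by an $s$-sparse trigonometric polynomial at rate $s^{-(1/2+\sigma/d)}$, and (iii) compressive sensing ($\ell^1$ minimization with bounded orthonormal systems) to identify that sparse polynomial from $O(s\log^4)$ samples. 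The localization in (i) is essential and also missing from your $p=\infty$ plan: Fourier-truncating $f$ itself leaves a band-limited but still \emph{infinite-dimensional} residual, so the claim that $f_N$ lies in an $\sim N^d$-dimensional space is false; the paper instead uses the embedding $B^\sigma(\Omega)\hookrightarrow C^\sigma(\Omega)$ (\Cref{lem:BarronIntoHoelderEmbedding}) and cites known $C^\sigma$-sampling results, which crucially gives the log-free rate $m^{-\sigma/d}$ that is needed to avoid a log penalty at $p=\infty$. Finally, after obtaining the local $L^2$ bound the paper still needs a non-trivial patching argument (\Cref{thm:UpperBoundL2}) — decomposing $\Omega$ into dyadically shrinking boxes toward the boundary and allocating samples across them — because the local lemma only reconstructs $\varphi f$ on $\Omega_0\subsetneq\Omega$; this step is also absent from your sketch.

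In short: the lower-bound idea is essentially the paper's and is repairable; the upper bound, especially $p=2$, is not proved by your plan, and the missing ingredients (cutoff localization, sparse trigonometric approximation, compressive sensing recovery, and global patching) are precisely where the paper's technical content lies.
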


\begin{remark*}
  1) Up to log factors, the preceding bound shows that
  \[
    s_m \bigl( B^\sigma(\Omega); L^p(\Omega) \bigr)
    \asymp m^{-(\frac{1}{ \max \{ p, 2 \} } + \frac{\sigma}{d})}
    .
  \]
  Thus, we see for any fixed $p \in [1,\infty)$ that---at least regarding the asymptotic rate---%
  \emph{Barron functions can be reconstructed from point samples
  without incurring the curse of dimensionality}.
  Note, however, that this only holds if $p \in [1,\infty)$ is fixed \emph{independent of $d$}.
  On the contrary, this is not at all true if one measures the reconstruction error in $L^\infty$
  (unless $\sigma \gtrsim d$).

  \medskip{}

  2) We emphasize that \emph{the exponent of the logarithmic factor stays bounded as $d \to \infty$}.

  \medskip{}

  3) In this paper we do \emph{not} study the asymptotic behavior
  of the constants $C_i = C_i (d,\sigma)$ with respect to $d$.
  Thus, it could be that the (essentially dimension-free) asymptotic rate only takes
  effect once $m \gg m_0 = m_0(d)$ with $m_0(d)$ potentially growing quickly with $d$.

  \medskip{}

  4) As stated above, the lower bound only applies to \emph{deterministic} methods of reconstructing
  Barron functions from point samples.
  In fact, the same bound also holds for \emph{randomized} algorithms;
  see \Cref{sec:LowerBounds} and in particular \Cref{thm:LowerBound} for more details.

  \medskip{}

  5) The reconstruction algorithm that we use to prove the upper bound is \emph{non-linear},
  since it is based on ($\ell^1$-minimization) methods from compressive sensing
  \cite{RauhutFoucartIntroToCS}.
  In fact, we show that---at least if one measures the reconstruction error in $L^p$ with $p \geq 2$---%
  any linear method of reconstruction will necessarily suffer from the curse of dimensionality;
  see \Cref{thm:LinearLowerBound} for more details.
\end{remark*}

\begin{proof}
  The upper bound will be shown in \Cref{thm:GeneralUpperBound}.
  The lower bound will be shown in \Cref{thm:LowerBound}.
\end{proof}

\subsection{Discussion of related work}%
\label{sub:RelatedWork}

\subsubsection{Results related to Barron spaces}

After their inception in
\cite{BarronUniversalApproximationBoundsForSigmoidal,BarronApproximationAndEstimationBoundsForNN},
the Barron space and its relation to neural networks has recently received much interest due to the
increased interest in rigorously understanding the astonishing practical success
of \emph{deep learning} methods:
In \cite{KlusowskiBarronApproximation}, it was shown that functions belonging to the Barron spaces
$B^2(\Omega)$ can be approximated up to $L^\infty$ error $\CalO(m^{-\frac{1}{2} - \frac{1}{d}})$
by shallow ReLU neural networks satisfying an $\ell^1$ bound on the network weights;
similar results were derived for the approximation of functions in $B^3 (\Omega)$
by neural networks with the squared ReLU activation function.
In \cite{KlusowskiBarronHighDimensionalRidgeFunctionCombinations}, Klusowski and Barron
proved learning bounds roughly similar to those in \cite{BarronApproximationAndEstimationBoundsForNN},
with a worse asymptotic rate of decay but much milder dependence of the implied constants
on the input dimension.
Pointwise smoothness properties of Barron functions were studied in
\cite{WeinanERepresentationFormulasForBarronFunctions}.

In \cite{CarageaPetersenVoigtlaenderNNApproximationBarronBoundary,PetersenVoigtlaenderOptimalLearningBarronBoundary},
the authors studied (optimal) learning bounds for classification functions
with decision boundary in the Barron space.
These learning bounds are intimately related to certain functional-analytic properties
of the Barron space, namely the behavior of the \emph{entropy numbers} of this space.
In \cite[Propositions~4.4 and 4.6]{PetersenVoigtlaenderOptimalLearningBarronBoundary},
it was shown that the $L^p$ entropy numbers of $B^1([0,1]^d)$ for arbitrary $p \in [1,\infty]$ satisfy
\begin{equation}
  \eps^{- 1 / (\frac{1}{2} + \frac{1}{d})}
  \lesssim M_{B^1, L^p} (\eps)
  \lesssim \eps^{- 1 / (\frac{1}{2} + \frac{1}{d})} \cdot \bigl(1 + \ln(1 / \eps)\bigr)
  \qquad \forall \, \eps \in (0,1)
  .
  \label{eq:IntroductionBarronEntropyNumbers}
\end{equation}
A similar bound (valid for general $\sigma > 0$)
appeared almost concurrently in \cite{SiegelUniformApproximationRatesAndNWidthsOfShallowNN};
see also \cite{SiegelEntropyAndWidthOfShallowNN} for related bounds
for a somewhat different (but related) function class of neural networks.

A surprising property of our results regarding the sampling numbers of the Barron spaces
is that while the asymptotic behavior of the \emph{entropy numbers} of $B^\sigma$
are independent (up to a log factor) of the choice of $p$, the sampling numbers vary greatly
with $p$; as noted above, the $L^\infty$ sampling numbers of $B^\sigma$ are subject to the curse
of dimensionality, while this is not true of the entropy numbers.
We remark that there is a general lower bound for the so-called \emph{Gelfand numbers}
(which are a lower bound for the sampling numbers) in terms of the entropy numbers,
known as \emph{Carl's inequality}; see e.g.\ \cite{CarlsInequality}.
Thus, our bounds show that Carl's inequality is not sharp in this case.
We also remark that in several other settings (see for instance the papers
\cite{UllrichNewUpperBoundForSamplingNumbers,UllrichRKHSSamplingRecoveryLInfty},
which study the sampling numbers for certain reproducing kernel Hilbert spaces)
there indeed is a very close relationship between the behavior of the entropy numbers
and the sampling numbers.

\subsubsection{Results related to sampling numbers}

The relation between sampling numbers and optimal learning bounds
was recently emphasized in \cite{DeVoreOptimalLearning}, in which also the
question \eqref{eq:IntroductionMainQuestion} studied in this paper was raised.
The main purpose of the article \cite{DeVoreOptimalLearning} is to devise
optimization problems which (at least in principle) can be numerically solved
and which attain the theoretically optimal recovery rate.
We remark that the proof of \Cref{thm:IntroductionMainResult} actually indicates
a recovery algorithm for the case of Barron spaces: one locally reconstructs
the function of interest via solving certain $\ell^1$ minimization problems
and then ``patches together'' these local reconstructions.

For the classical Sobolev spaces $W^{k,p}([0,1]^d)$, the asymptotic behavior
of the sampling numbers is well-studied.
First, note (in the case of $d > 1$) that $W^{k,p}([0,1]^d) \hookrightarrow C([0,1]^d)$
if and only if $k/d > 1/p$.
In this case, \cite[Theorem~6.1]{HeinrichRandomApproximation} shows that
\begin{equation}
  s_m \bigl(W^{k,p}([0,1]^d); L^q(\Omega)\bigr)
  \asymp \begin{cases}
           m^{-k/d},             & \text{if } q \leq p, \\
           m^{-k/d + 1/p - 1/q}, & \text{if } q > p,
         \end{cases}
  \label{eq:SobolevSamplingNumbers}
\end{equation}
and the same bound also holds for randomized recovery algorithms,
meaning that allowing potentially randomized algorithms does not improve the recovery.

The question answered by the sampling numbers is, how well an unknown function
from a given class can be recovered, if one has free choice over the sampling points
and over the recovery algorithm.
However, in many scenarios (for example, in many settings studied in machine learning),
it is unrealistic to assume that the samples can be chosen freely;
rather, they are often sampled from a given probability distribution.
For the setting of Sobolev functions,
it was recently shown in \cite{KriegSonnleitnerRandomPointsForApproximatingSobolevFunctions}
that in fact, \emph{randomly chosen points (with respect to the uniform distribution)
perform as good as the optimal points}
(at least regarding the asymptotic behavior of the error for $m \to \infty$).
It would be interesting to study if this remarkable property also holds for the Barron spaces.

One limitation of the rates in \Cref{eq:SobolevSamplingNumbers} is that they are subject
to the \emph{curse of dimensionality}: Unless the smoothness $k$ is very large
(meaning $k \gtrsim d$), the decay of the recovery error with the number of samples
is extremely slow for large input dimension $d$.
For this reason, much research has been concerned with finding function spaces that
do \emph{not} suffer from this curse of dimensionality but which are nevertheless
sufficiently rich that functions of interest in practice belong to such spaces.
In addition to the Barron spaces, one further class of spaces with this property
are the \emph{spaces of dominating mixed smoothness};
see \cite{DungTemlyakovUllrichHyperbolicCrossApproximation} for a good overview.
The main issue with these spaces is that while the ``main rate of decay''
for quantities like the entropy numbers or (linear) sampling numbers is dimension-free,
one gets a further power of a logarithmic term, and the power grows unboundedly as $d \to \infty$.
As a case in point, \cite[Theorem~5.12]{DungTemlyakovUllrichHyperbolicCrossApproximation} shows
for $r > \frac{1}{2}$ that
\begin{equation}
  s_m^{\mathrm{lin}} (\mathbf{W}_2^r; L^\infty)
  \asymp m^{-(r - 1/2)} \cdot (\ln m)^{(d-1) \cdot r}
  ,
  \label{eq:DominatingMixedSmoothnessSamplingNumber}
\end{equation}
where $\mathbf{W}_2^r$ is a Sobolev space of dominating mixed smoothness.
It should be remarked that this is only for \emph{linear} recovery algorithms,
which in the case of the Barron space are fully subject to the curse of dimensionality
(see \Cref{sec:LinearSamplingNumbersLowerBound}).
I do not know whether similar results hold for the \emph{non-linear} sampling numbers,
but I deem it unlikely, since also the entropy numbers of the spaces of dominating mixed smoothness
are subject to such a logarithmic factor;
see \cite[Theorem~6.13]{DungTemlyakovUllrichHyperbolicCrossApproximation}.
We emphasize that while our upper bounds for the sampling numbers for the Barron space
involve a logarithmic factor as well, the power of the logarithmic factor
stays bounded as $d \to \infty$, in stark contrast to \Cref{eq:DominatingMixedSmoothnessSamplingNumber}.

Finally, we would also like to mention the recent work \cite{GrohsVoigtlaenderNNSamplingNumbers}
which studies the sampling numbers of certain function spaces related to neural networks,
namely so-called neural network approximation spaces.

\subsection{Proof techniques and structure of the paper}%
\label{sub:ProofTechniques}

In \Cref{sec:UpperBounds}, we prove the upper bound in \Cref{thm:IntroductionMainResult}.
The case $p = \infty$ is handled simply by noting that $B^\sigma (\Omega) \hookrightarrow C^s(\Omega)$
and then using known results for the sampling numbers of $C^s(\Omega)$.
Then, once the case $p = 2$ is settled as well, the case $p \in [2,\infty]$ follows by
a form of interpolation (see \Cref{lem:SpecialInterpolationLemma}), and the case $p \in [1,2]$
follows from the embedding
$L^2([-\frac{1}{2},\frac{1}{2}]^d) \hookrightarrow L^p([-\frac{1}{2}, \frac{1}{2}]^d)$
which holds for $p \in [1,2]$.
Thus, the main difficulty is the case $p = 2$.
The proof of this case proceeds as follows:
\begin{itemize}
  \item We first prove a \emph{local} reconstruction result, where we show that if
        we can sample $f \in B^\sigma([-\frac{1}{2}, \frac{1}{2}]^d)$ on all of
        $\Omega = [-\frac{1}{2},\frac{1}{2}]^d$, then we can reconstruct $f$
        on the \emph{smaller} domain $\Omega_0 = [-\frac{1}{4},\frac{1}{4}]^d$
        with an error bound similar to the one stated in \Cref{thm:IntroductionMainResult}.

        This is shown by first proving that if $f \in B^{\sigma}(\Omega)$ and if $\varphi$
        is a smooth cutoff function satisfying $\supp \varphi \subset (-\frac{1}{2}, \frac{1}{2})^d$
        and $\varphi \equiv 1$ on $\Omega_0$, then the localized function $\varphi \cdot f$
        admits a Fourier expansion with coefficients in a weighted $\ell^1$ space.
        This then implies (by a deep result from \cite{DeVoreNonLinearApproximationByTrigonometricSum})
        that $\varphi \cdot f$ can be well approximated by a \emph{sparse} trigonometric polynomial
        $g$.
        By interpreting the samples of $f \cdot \varphi$ as noisy samples of $g$,
        we then use results from compressive sensing to recover an approximation $h$ of $g$,
        which is then a good approximation of $f \cdot \varphi$.

        Note that considering the localized function $f \cdot \varphi$ is really necessary:
        Since $f$ need not be periodic, $f$ itself will not have a Fourier expansion
        with well-controlled coefficients.

  \item We extend the \emph{local} reconstruction result to a global result, by using
        a suitable partition $\Omega = \biguplus_{n \in \N_0^d, \theta \in \{ \pm \}^d} Q_{\n}^{\theta}$
        of $\Omega$ into $d$-dimensional rectangles $Q_{\n}^\theta$ with the property that
        there is a larger rectangle $P_{\n}^\theta \supset Q_{\n}^\theta$
        that is still contained in $\Omega$ and such that (after a suitable affine change of variables),
        $Q_{\n}^\theta$ lies with respect to $P_{\n}^\theta$ as $\Omega_0$ lies with respect to $\Omega$.
        After a change of coordinates, this essentially allows one to sample $f|_{Q_{\n}^\theta}$
        on the bigger set $P_{\n}^\theta \subset \Omega$ and then to employ
        the local reconstruction result to reconstruct an approximation to $f|_{Q_{\n}^\theta}$.
        By ``patching together'' these local results and by suitably choosing
        the number of samples used on each $Q_{\n}^\theta$, one then obtains the global result.
\end{itemize}

\Cref{sec:LowerBounds} proves the lower bound in \Cref{thm:IntroductionMainResult}.
This is based on showing that there is a large family of sums of bump functions
contained in the Barron space $B^{\sigma}(\Omega)$.
Then, given the position of the sampling points, one can choose the sum of bump functions $f$
such that it avoids all the sampling points (and is thus indistinguishable from the zero function),
but such that $f$ still has comparatively big $L^p$ norm.
This proves the lower bound for deterministic recovery procedures.
The extension to possibly randomized algorithms is based on proving a lower bound
for the \emph{average case} performance of any deterministic algorithm
over a suitably chosen class of sums of bump functions.

Finally, in \Cref{sec:LinearSamplingNumbersLowerBound}, we show that \emph{linear} sampling
recovery procedures for the Barron space are necessarily subject to the curse of dimensionality.
This is based on observations by Barron in \cite[Section~6]{BarronUniversalApproximationBoundsForSigmoidal},
which we slightly adapt in order to also cover the case of \emph{randomized} linear recovery procedures.

Some more technical proofs are postponed to \Cref{sec:Appendix}.

\subsection{Notation}%
\label{sub:Notation}

We denote the standard inner product of $x,y \in \R^d$
by $\langle x,y \rangle = \sum_{j=1}^{d} x_j y_j$.
The Euclidean norm $\| x \|_{\ell^2} = \sqrt{\langle x,x \rangle}$
of $x \in \R^d$ is denoted by $|x| = \| x \|_{\ell^2}$.
We denote the Lebesgue measure of a (measurable) set $M \subset \R^d$ by $\lebesgue(M)$.
Given $a = (a_1,\dots,a_d) \in \R^d$, we denote by $D_a = \mathrm{diag}(a_1,\dots,a_d)$
the diagonal matrix with diagonal entries equal to $a_1,\dots,a_d$.

For the Fourier transform, we use the normalization
\[
  \Fourier f (\xi)
  = \widehat{f}(\xi)
  = \int_{\R^d}
      f(x) \cdot e^{-2 \pi i \langle x, \xi \rangle}
    \, d x
\]
for $f \in L^1(\R^d)$.
It is well-known that $\Fourier$ extends to a unitary operator on $L^2 (\R^d)$
and that the inverse Fourier transform is given by $\Fourier^{-1} f (x) = \Fourier f(-x)$.

Given a finite set $\Theta \neq \emptyset$ and any sequence $(a_\theta)_{\theta \in \Theta} \subset \R$,
we write
\[
  \avsum_{\theta \in \Theta} a_\theta
  := \frac{1}{|\Theta|}
     \sum_{\theta \in \Theta}
       a_\theta
\]
for the average of the $a_\theta$.

\section{Upper bounds}%
\label{sec:UpperBounds}

In this section, we derive (almost) optimal upper bounds for the sampling numbers
of the Barron space $B^{\sigma}(\Omega)$ for $\Omega = [-\frac{1}{2}, \frac{1}{2}]^d$,
where the reconstruction error is measured in $L^p$.
We start with the case $p = 2$ (\Cref{sub:L2UpperBounds}),
then consider the case $p = \infty$ (\Cref{sub:LInftyUpperBounds}),
and then use these two cases to obtain bounds for the general case $p \in [1,\infty]$.

\subsection{Upper bounds for \texorpdfstring{$L^2$}{L²} sampling numbers}%
\label{sub:L2UpperBounds}

We first note the following elementary embedding of the Barron space into $L^\infty$.

\begin{lemma}\label{lem:BarronEmbedsIntoLInfty}
  Let $\Omega \subset \R^d$ and $\sigma \in (0,\infty)$ be arbitrary.
  Then every $f \in B^\sigma(\Omega)$ satisfies
  \[
    \sup_{x \in \Omega} |f(x)|
    \leq \| f \|_{B^\sigma(\Omega)}
    .
  \]
\end{lemma}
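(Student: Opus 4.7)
The plan is to estimate $|f(x)|$ pointwise directly from the Fourier-analytic representation \eqref{eq:BarronRepresentation} and then optimize over all admissible $F$.

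First, fix any $f \in B^\sigma(\Omega)$ and pick an arbitrary measurable $F : \R^d \to \CC$ such that \eqref{eq:BarronRepresentation} holds and $\int_{\R^d} (1+|\xi|)^\sigma |F(\xi)| \, d\xi < \infty$. For any $x \in \Omega$, since the complex exponential has modulus one, I would apply the trivial absolute-value estimate under the integral to obtain
\[
  |f(x)|
  \leq \int_{\R^d} |F(\xi)| \, d\xi .
\]
Because $\sigma > 0$, the weight $(1+|\xi|)^\sigma$ is bounded below by $1$ everywhere, so
\[
  \int_{\R^d} |F(\xi)| \, d\xi
  \leq \int_{\R^d} (1+|\xi|)^\sigma \, |F(\xi)| \, d\xi .
\]
Combining these two bounds and taking the supremum over $x \in \Omega$ on the left yields a bound independent of the choice of $F$.

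Finally, I would take the infimum on the right-hand side over all admissible representers $F$ of $f$; by \Cref{def:BarronFunctions}, this infimum is exactly $\|f\|_{B^\sigma(\Omega)}$, giving the claimed inequality. There is no real obstacle here: the only subtlety is the use of $\sigma > 0$ to get $(1+|\xi|)^\sigma \geq 1$, which is what makes the unweighted $L^1$ norm of $F$ controlled by its weighted one.
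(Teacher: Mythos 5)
Your proposal is correct and takes essentially the same route as the paper: estimate $|f(x)|$ by the unweighted $L^1$ norm of a representer $F$, use $(1+|\xi|)^\sigma \geq 1$ to pass to the weighted norm, and then take the infimum over admissible $F$. The paper phrases the last step via an $\eps$-approximate minimizer ($\delta>0$ and let $\delta\to 0$), but that is just a cosmetic variant of taking the infimum.
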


\begin{proof}
  Let $f \in B^\sigma(\Omega)$.
  By definition of the norm $\| \bullet \|_{B^\sigma(\Omega)}$, given any $\delta > 0$,
  there exists $F : \R^d \to \CC$ satisfying
  $\int_{\R^d} (1 + |\xi|)^\sigma |F(\xi)| \, d \xi \leq \| f \|_{B^\sigma(\Omega)} + \delta$
  and such that $f(x) = \int_{\R^d} F(\xi) e^{2 \pi i \langle x, \xi \rangle}\, d \xi$
  for all $x \in \Omega$, which then easily implies $|f(x)| \leq \| f \|_{B^\sigma(\Omega)} + \delta$
  for all $x \in \Omega$.
  Since $\delta > 0$ was arbitrary, this implies the stated estimate.
\end{proof}

The following is the main technical lemma on which our proof relies.
It is based on proof techniques in
\cite[Lemmas~4.3 and 4.5]{PetersenVoigtlaenderOptimalLearningBarronBoundary}.

\begin{lemma}\label{lem:ReconstructionOnSmallerSet}
  Let $d \in \N$ and $\sigma > 0$ be given
  and let $\Omega := [-\frac{1}{2}, \frac{1}{2}]^d$
  and $\Omega_0 := [-\frac{1}{4}, \frac{1}{4}]^d$.
  There exists a constant $\kappa = \kappa(d,\sigma) \geq 1$
  and an absolute constant $C \geq 1$ such that if $\eps > 0$ and if
  \begin{equation}
    m
    \geq \kappa
         \cdot \eps^{- 1 / (\frac{1}{2} + \frac{\sigma}{d})}
         \cdot \ln^4 \bigl( e + \eps^{-1} \bigr)
    ,
    \label{eq:AssumptionNumberOfSamples}
  \end{equation}
  then there exist
  \[
    \text{sampling points } x_1,\dots,x_m \in \Omega
    \text{ and }
    \text{a reconstruction map } T : \R^m \to L^2(\Omega)
  \]
  such that the following holds:
  \[
    \forall \, f \in B^\sigma (\Omega) \text{ with } \| f \|_{B^\sigma(\Omega)} \leq 1: \quad
      \| f - T(f(x_1),\dots,f(x_m)) \|_{L^2(\Omega_0)}
    \leq C \cdot \eps .
  \]
\end{lemma}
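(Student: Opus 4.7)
The plan is to localize $f$ by a smooth cutoff (so it becomes a compactly supported function with a usable Fourier series), approximate that localization by a sparse trigonometric polynomial via the nonlinear trigonometric approximation theorem of \cite{DeVoreNonLinearApproximationByTrigonometricSum}, and finally recover the sparse polynomial from the point samples by $\ell^1$-minimization from compressive sensing. Fix once and for all a cutoff $\varphi \in C_c^\infty(\R^d)$ with $\supp \varphi \subset (-\tfrac12,\tfrac12)^d$, $0 \le \varphi \le 1$, and $\varphi \equiv 1$ on $\Omega_0$. Since $\varphi f$ is compactly supported in the open cube, its $1$-periodic extension has Fourier coefficients $c_k = \widehat{\varphi f}(k) = (\widehat{\varphi} * F)(k)$ (using a near-optimal Barron representation $f = \Fourier^{-1} F$). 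Splitting $(1+|k|)^\sigma \le (1+|k-\eta|)^\sigma (1+|\eta|)^\sigma$ and using that $\widehat{\varphi}$ is Schwartz, I would obtain
\[
  \sum_{k \in \Z^d} (1 + |k|)^\sigma \, |c_k|
  \leq C_{\varphi,\sigma,d} \cdot \| f \|_{B^\sigma(\Omega)} .
\]

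This weighted $\ell^1$ bound is precisely the hypothesis of DeVore's theorem: for every $N \in \N$ there exists an $N$-term trigonometric polynomial $g_N = \sum_{k \in \Lambda(f)} d_k \, e^{2 \pi i \langle k, \cdot \rangle}$ with $|\Lambda(f)| \le N$ such that $\| \varphi f - g_N \|_{L^2(\Omega)} \lesssim N^{-(1/2 + \sigma/d)} \| f \|_{B^\sigma(\Omega)}$. Moreover, the same weighted bound controls the Fourier tail, so after truncating to the frequency box $\Lambda_K := \{-K,\dots,K\}^d$ with $K \asymp N^{1/d}$ (so $|\Lambda_K| \asymp N$), one may assume $\Lambda(f) \subset \Lambda_K$ without worsening the rate. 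Choosing $N \asymp \eps^{-1/(1/2 + \sigma/d)}$ makes the approximation error $\lesssim \eps$.

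The family $(e^{2\pi i \langle k,\cdot\rangle})_{k \in \Lambda_K}$ is a bounded orthonormal system on $\Omega$ with sup-norm $1$, so classical compressive-sensing results for such systems (see \cite[Chap.~12]{RauhutFoucartIntroToCS}) apply: drawing $x_1,\dots,x_m$ i.i.d.\ uniformly on $\Omega$ with
\[
  m
  \gtrsim N \cdot \log^4(|\Lambda_K|)
  \asymp \eps^{-1/(1/2 + \sigma/d)} \cdot \log^4(\eps^{-1})
\]
yields, with high probability and uniformly over all $N$-sparse vectors supported in $\Lambda_K$, a sampling matrix satisfying the restricted isometry property at sparsity $\asymp N$. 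For an unknown $f$ with $\|f\|_{B^\sigma} \le 1$, I would compute $y_j := \varphi(x_j) f(x_j)$ from the given samples $f(x_j)$, run a noise-aware $\ell^1$ program $c^\sharp \in \arg\min \|c\|_{\ell^1}$ subject to $\|\Phi c - m^{-1/2} y\|_{\ell^2} \le \eta$, and let $h := \sum_{k \in \Lambda_K} c^\sharp_k \, e^{2\pi i \langle k,\cdot\rangle}$. The robust CS recovery bound combined with Parseval yields $\| \varphi f - h \|_{L^2(\Omega)} \lesssim \eps$, uniformly over the unit ball. Since an event of positive probability is non-empty, deterministic sampling points with this property exist; setting $T(f(x_1),\dots,f(x_m)) := h$ and using $\varphi \equiv 1$ on $\Omega_0$ gives the stated $L^2(\Omega_0)$ bound.

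The principal obstacle is the bookkeeping that matches three distinct error contributions---the Fourier tail outside $\Lambda_K$, the best $N$-term sparse approximation of the truncated coefficients, and the CS recovery noise---while keeping the power of the logarithm independent of $d$. The crucial point is that $\varphi(x_j) f(x_j)$ is not an exact sample of a sparse polynomial supported in $\Lambda_K$, so one must invoke the stronger form of $\ell^1$-recovery that simultaneously tolerates measurement noise and non-sparsity, with the compressibility measured in $\ell^1$ (not $\ell^2$). This is exactly the regime where the weighted $\ell^1$ summability of $(c_k)$ is decisive, and where the condition $m \gtrsim N \log^4 N$ translates into the stated threshold $m \gtrsim \eps^{-1/(1/2+\sigma/d)} \log^4(e + \eps^{-1})$ without any $d$-dependence in the exponent of the logarithm.
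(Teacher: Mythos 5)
Your overall strategy---localize by a smooth cutoff, establish a weighted $\ell^1$ bound on the Fourier coefficients of $\varphi f$, approximate by a sparse trigonometric polynomial via DeVore's theorem, and recover via noisy $\ell^1$-minimization from compressive sensing---is exactly the paper's approach. However, there are two concrete gaps that would prevent the argument from closing as written.

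First, you invoke DeVore's theorem for an $L^2$ estimate, $\| \varphi f - g_N \|_{L^2(\Omega)} \lesssim N^{-(1/2+\sigma/d)}$. But the noisy $\ell^1$ program requires a bound $\|e\|_{\ell^2} \le \eta \sqrt{m}$ on the noise vector $e_j = \varphi(x_j) f(x_j) - g_N(x_j)$, which are the \emph{pointwise} deviations at the sample points. An $L^2$ bound on $\varphi f - g_N$ gives no deterministic control on these values (and you cannot invoke an expectation bound because the sample points are fixed \emph{before} the adversary chooses $f$). The argument requires the $L^\infty$ estimate $\| \varphi f - g_N \|_{L^\infty} \lesssim \eps$, which immediately gives $\|e\|_{\ell^\infty} \lesssim \eps$ and hence $\|e\|_{\ell^2} \le \eps \sqrt{m}$ uniformly over the unit ball. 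DeVore's theorem does deliver the $L^\infty$ version, and this is what the paper uses, so this is fixable---but as written the CS step is unjustified.

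Second, your truncation radius $K \asymp N^{1/d}$, chosen so that $|\Lambda_K| \asymp N$, is too small. With $N \asymp \eps^{-1/(1/2 + \sigma/d)}$, the Fourier tail bound from weighted $\ell^1$ summability gives $\sum_{\|k\|_\infty > K} |c_k| \lesssim K^{-\sigma} \asymp N^{-\sigma/d}$, which is polynomially larger than the target $\eps \asymp N^{-(1/2+\sigma/d)}$; the inequality $N^{-\sigma/d} \lesssim N^{-(1/2+\sigma/d)}$ is simply false. One needs $K \gtrsim \eps^{-1/\sigma}$, in which case $|\Lambda_K| \asymp K^d$ is polynomially larger than $N$. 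The concern that apparently motivated your small $K$---keeping $\log^4|\Lambda_K|$ from picking up a $d$-dependent power---is unfounded: with the larger $K$ one has $\log|\Lambda_K| \asymp (d/\sigma) \log(\eps^{-1})$, and the $(d/\sigma)^4$ prefactor is absorbed into $\kappa(d,\sigma)$, while the power of the log stays at $4$. Finally, your order of operations (DeVore, then truncate to $\Lambda_K$) is delicate because DeVore's theorem does not constrain the support of the sparse approximant to lie in a box. The paper avoids this by truncating the Fourier series first, applying DeVore to the truncated function, and then convolving the resulting sparse polynomial with a de la Vall\'ee Poussin kernel: this pushes the sparse approximant into a fixed frequency box without increasing the number of nonzero terms and without sacrificing the $L^\infty$ error.
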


\begin{remark*}
  Note that samples $x_i$ come from all of $\Omega$
  but that the reconstruction is only on $\Omega_0$, not on all of $\Omega$.
\end{remark*}

\begin{proof}
  For $f \in B^\sigma(\Omega)$ with $\| f \|_{B^\sigma(\Omega)} \leq 1$,
  \Cref{lem:BarronEmbedsIntoLInfty} shows that $\| f \|_{L^\infty(\Omega)} \leq 1$.
  Thus, in case that $\eps \geq 1$, we can even choose $m = 0$ and $T : \R^m \to L^2(\Omega)$
  such that $T \equiv 0$.
  Therefore, in the following we need only consider the case $\eps < 1$.

  \medskip{}

  We denote the Fourier basis for $L^2(\Omega)$ by
  $(e_k)_{k \in \Z} := (e^{2 \pi i \langle k,\bullet \rangle})_{k \in \Z}$,
  and for any $g \in L^1(\Omega)$, we denote the associated discrete Fourier coefficients by
  $\widehat{f}(k) = \int_{\Omega} f(x) \, \overline{e_k(x)} \, d x$.
  Furthermore, we identify $L^p (\Omega) \cong L^p (\R^d / \Z^d)$ for any $p \in [1,\infty]$,
  meaning that we identify a function $f \in L^p(\Omega)$ with its $\Z^d$-periodic extension
  to $\R^d$, so that we can sensibly talk about the convolution $f \ast g$ of two functions
  $f,g \in L^1(\Omega)$.
  It will be clear from context in each instance whether we consider
  the Fourier transform or convolution on $\Omega$ or on $\R^d$.

  \medskip{}

  \textbf{Step~1 (Preparations):}
  We fix a function $\varphi \!\in\! C_c^\infty (\R^d)$ satisfying $0 \leq \varphi \leq 1$
  and $\supp \varphi \subset \Omega^{\circ}$ and such that $\varphi \equiv 1$ on $\Omega_0$.
  For brevity, let us define $w : \R^d \to (0,\infty), \xi \mapsto (1 + |\xi|)^\sigma$.
  We define the weighted $L^1$ space $L_w^1$ as the set of all measurable $G : \R^d \to \CC$
  for which the norm $\| G \|_{L_w^1} := \| G_w \|_{L^1}$ is finite, where we use the notation
  \[
    G_w : \quad
    \R^d \to \CC, \quad
    G_w (\xi) = w(\xi) \cdot G(\xi)
    .
  \]
  We note for later use for $\xi, \eta \in \R^d$ that
  \(
    1 + |\xi + \eta|
    \leq 1 + |\xi| + |\eta|
    \leq (1 + |\xi|) \cdot (1 + |\eta|) ,
  \)
  and raising this to the power $\sigma$ shows that $w$ is submultiplicative, that is,
  \begin{equation}
    w(\xi + \eta)
    \leq w(\xi) \cdot w(\eta)
    \qquad \forall \, \xi, \eta \in \R^d .
    \label{eq:WeightSubmultiplicative}
  \end{equation}

  Now, given any continuous function $G : \R^d \to \CC$,
  we define the (Wiener) maximal function of $G$ as
  \[
    M G : \quad
    \R^d \to [0,\infty), \quad
    \xi \mapsto \sup_{\eta \in \xi + [-1/2, 1/2]^d} |G(\eta)|
    .
  \]
  It is easy to see%
  \footnote{Indeed, if $\lambda \in [0,\infty)$ and $M G (\xi) > \lambda$, then there exists
  $\eta \in \xi + [-1/2, 1/2]^d$ such that $|G(\eta)| > \lambda$.
  By continuity of $G$, there then exists $\eps > 0$ such that $|G(\eta + v)| > \lambda$
  for all $v$ with $\| v \| < \eps$.
  Now, if $\widetilde{\xi} = \xi + w$ with $\| w \| < \eps$, then $\eta + w \in (\xi + w) + [-1/2,1/2]^d$
  and thus $M G (\widetilde{\xi}) \geq |G(\eta + w)| > \lambda$.
  This shows that $\{ \xi \in \R^d \colon M G (\xi) > \lambda \}$ is open for every $\lambda \in [0,\infty)$,
  and this implies that $M G$ is measurable.
  }
  that $M G$ is measurable.
  Further, note that if $\xi \!\in\! n + [-\frac{1}{2,} \frac{1}{2}]^d$,
  then $n \!\in\! \xi + [-\frac{1}{2,} \frac{1}{2}]^d$ and thus $M G (\xi) \geq |G(n)|$.
  Therefore, if $M G \in L^1$, then
  \begin{equation}
    \begin{split}
      \sum_{n \in \Z^d}
        |G(n)|
      & = \sum_{n \in \Z^d}
            \int_{n + [-1/2, 1/2]^d}
              |G (n)|
            \, d \xi \\
      & \leq \sum_{n \in \Z^d}
               \int_{n + [-1/2, 1/2]^d}
                 M G (\xi)
               \, d \xi \\
      & = \int_{\R^d}
            M G (\xi)
          \, d \xi
        = \| M G \|_{L^1}
      .
    \end{split}
    \label{eq:SamplingUsingMaximalFunction}
  \end{equation}

  Finally, we note that $\widehat{\varphi} \in \Schwartz(\R^d)$, so that there exists
  a certain constant $\kappa_1 = \kappa_1 (\varphi, \sigma) = \kappa_1(d,\sigma) \geq 1$ satisfying
  $|\widehat{\varphi}(\xi)| \leq \kappa_1 \cdot (1 + |\xi|)^{-(d+1+\sigma)}$ for all $\xi \in \R^d$.
  This implies for any $\eta \in \R^d$ and $v \in [-1/2, 1/2]^d$ that
  \begin{equation}
    w(\xi + v) \cdot |\widehat{\varphi}(\xi + v)|
    \leq \kappa_1 \cdot (1 + |\xi + v|)^{-(d+1)}
    \leq \kappa_2 \cdot (1 + |\xi|)^{-(d+1)}
    \label{eq:CutoffFunctionFourierDecay}
  \end{equation}
  for a suitable constant $\kappa_2 = \kappa_2 (d,\sigma) \geq 1$.
  Here, the last step used that
  \(
    1 + |\xi|
    \leq 1 + |\xi + v| + |-v|
    \leq 1 + \sqrt{d} + |\xi + v|
    \leq (1 + \sqrt{d}) \cdot (1 + |\xi + v|)
  \)
  and thus $(1 + |\xi + v|)^{-(d+1)} \leq (1 + \sqrt{d})^{d+1} \cdot (1 + |\xi|)^{-(d+1)}$.

  \medskip{}

  \noindent
  \textbf{Step~2 (A Fourier expansion for $\varphi \cdot f$):}
  Let $f \in B^\sigma(\Omega)$ with $\| f \|_{B^\sigma(\Omega)} \leq 1$ be arbitrary.
  By definition of this norm, this means that there exists $F \in L_w^1(\R^d)$
  with $\| F \|_{L_w^1} \leq 2$ and such that
  $f(x) = \int_{\R^d} e^{2 \pi i \langle x, \xi \rangle} F(\xi)\, d \xi$
  for all $x \in \Omega$.
  We can thus extend $f$ to a function defined on all of $\R^d$ by setting
  $f := \Fourier^{-1} F$.
  Note that $f$ is then continuous and bounded with $\| f \|_{L^\infty} \leq \| F \|_{L^1} \leq 2$.

  Now, define
  \(
    H
    := F \ast \widehat{\varphi}
    \in L_w^1 \ast L^\infty
    \subset L^1 \ast L^\infty
    \subset C_b
    ,
  \)
  where $C_b$ is the space of all bounded continuous functions on $\R^d$.
  Using \Cref{eq:WeightSubmultiplicative,eq:CutoffFunctionFourierDecay},
  we then see for arbitrary $\xi \in \R^d$ and $v \in [-1/2, 1/2]^d$ that
  \begin{align*}
    |H_w (\xi + v)|
    & \leq \int_{\R^d}
             w(\xi + v) \cdot |F(\eta)| \cdot |\widehat{\varphi}(\xi + v - \eta)|
           \, d \eta \\
    & \leq \int_{\R^d}
             w(\eta) \cdot w(\xi + v - \eta) \cdot |F(\eta)| \cdot |\widehat{\varphi}(\xi + v - \eta)|
           \, d \eta \\
    & \leq \kappa_2
           \int_{\R^d}
             |F_w (\eta)| \cdot (1 + |\xi - \eta|)^{-(d+1)}
           \, d \eta ,
  \end{align*}
  which means that $M H_w (\xi) \leq \kappa_2 \cdot |F_w| \ast (1 + |\bullet|)^{-(d+1)}$.
  Using the well-known estimate $\| f \ast g \|_{L^1} \leq \| f \|_{L^1} \cdot \| g \|_{L^1}$
  for convolutions, we thus see
  \[
    \| M H_w \|_{L^1}
    \leq \kappa_2 \cdot \| F_w \|_{L^1} \cdot \| (1 + |\bullet|)^{-(d+1)} \|_{L^1}
    \leq \kappa_3
  \]
  for a suitable constant $\kappa_3 = \kappa_3(d,\sigma) \geq 1$.
  In view of \Cref{eq:SamplingUsingMaximalFunction}, this implies
  \[
    \sum_{n \in \Z^d}
      (1 + |n|)^\sigma |H(n)|
    = \sum_{n \in \Z^d}
        |H_w(n)|
    \leq \| M H_w \|_{L^1}
    \leq \kappa_3 .
  \]
  Now, finally note because of $F, \widehat{\varphi} \in L^1(\R^d)$ and by the convolution theorem
  that
  \[
    \Fourier^{-1} H
    = \Fourier^{-1} \bigl[ \, F \ast \widehat{\varphi} \, \bigr]
    = (\Fourier^{-1} F) \cdot \varphi
    = f \cdot \varphi
    ,
  \]
  so that the Fourier coefficients of $f \cdot \varphi$
  (considered as a function on $\Omega = [-\frac{1}{2}, \frac{1}{2}]^d$) are given by
  \begin{align*}
    c_n
    & := \widehat{f \cdot \varphi} (n)
    = \int_{[-1/2, 1/2]^d}
        f(x) \, \varphi(x) \, e^{-2 \pi i \langle n, x \rangle}
      \, d x \\
    & = \int_{\R^d}
          f(x) \, \varphi(x) \, e^{- 2 \pi i \langle n, x \rangle}
        \, d x \\
    & = \Fourier [f \cdot \varphi] (n)
      = \Fourier \Fourier^{-1} H (n)
      = H (n)
    .
  \end{align*}
  Here, the last step is justified by Fourier inversion, since $H \in L^1$ is continuous
  and $\Fourier^{-1} H = f \cdot \varphi \in L^1$ as well.

  Overall, since $(e^{- 2 \pi i \langle n, \bullet \rangle})_{n \in \Z^d}$ is an orthonormal basis
  for $L^2(\Omega)$, we see that
  \begin{equation}
    f \cdot \varphi
    = \sum_{n \in \Z^d}
        c_n \, e^{2 \pi i \langle n ,\bullet \rangle}
    \qquad \text{with} \qquad
    \sum_{n \in \Z^d}
      (1 + |n|)^\sigma \cdot |c_n|
    \leq \kappa_3
    .
    \label{eq:LocalizedFFourierExpansion}
  \end{equation}
  The convergence of the series is a priori in $L^2(\Omega)$, but since $f \cdot \varphi$
  is continuous and the Fourier series converges absolutely (and hence uniformly),
  the equality in fact holds pointwise.

  \medskip{}

  \noindent
  \textbf{Step~3 (Approximating $\varphi \cdot f$ by a sparse trigonometric polynomial):}
  For a suitable constant $\kappa_4 = \kappa_4 (d,\sigma) \geq 1$ to be specified below, define
  \begin{equation}
    \lambda := \frac{1}{2} + \frac{\sigma}{d},
    \quad
    N := \big\lfloor \big( 2 \kappa_3 \big/ \eps \big)^{1/\sigma} \big\rfloor,
    \quad \text{and} \quad
    s := \bigg\lceil \left(\frac{3^{d+1} \kappa_3 \kappa_4}{\eps}\right)^{\!\! 1/\lambda} \bigg\rceil
    .
    \label{eq:ParameterChoices}
  \end{equation}
  Furthermore, set
  \begin{equation}
    I := \{ k \in \Z^d \colon \| k \|_{\ell^\infty} \leq 2N \}
    .
    \label{eq:IndexSet}
  \end{equation}

  For brevity, define $f_0 := f \cdot \varphi$.
  \Cref{eq:LocalizedFFourierExpansion} shows for $c_k := \widehat{f_0}(k)$
  that $f_0 = \sum_{k \in \Z^d} c_k \, e_k$ with $\sum_{k \in \Z^d} (1 + |k|)^\sigma |c_k| \leq \kappa_3$.
  Therefore, setting $g := \sum_{\| k \|_{\ell^\infty} \leq N} c_k \, e_k$, we see
  \begin{equation}
    \begin{split}
      \| f_0 - g \|_{L^\infty}
      & \leq \sum_{\| k \|_{\ell^\infty} > N} |c_k| \\
      & \leq (1 + N)^{-\sigma}
             \sum_{\| k \|_{\ell^\infty} \geq N}
               (1 + \| k \|_{\ell^\infty})^\sigma |c_k| \\
      & \leq (1 + N)^{-\sigma} \sum_{\| k \|_{\ell^\infty} \geq N} (1 + | k |)^\sigma |c_k| \\
      & \leq (1 + N)^{-\sigma} \, \kappa_3
        \leq \frac{\eps}{2}
        ,
    \end{split}
    \label{eq:CutoffEstimate}
  \end{equation}
  where the last step is justified by our specific choice of $N$.
  Now, \cite[Theorem~6.1]{DeVoreNonLinearApproximationByTrigonometricSum} shows that there
  exists a constant $\kappa_4 = \kappa_4(d,\sigma) \geq 1$ (already mentioned above) and a trigonometric
  polynomial $h_0$ with at most $s$ nonzero terms (i.e., a linear-combination of at most $s$
  elements of the family $(e_k)_{k \in \Z}$) such that
  $\| \frac{g}{\kappa_3} - h_0 \|_{L^\infty} \leq \kappa_4 \cdot s^{-\lambda}$,
  and thus
  \[
    \| g - h \|_{L^\infty}
    \leq \kappa_3 \kappa_4 \cdot s^{-\lambda}
    \leq \frac{\eps}{3^{d+1}}
  \]
  for $h := \kappa_3 \, h_0$, which is also a trigonometric polynomial with at most $s$ non-zero terms.

  Next, for $k \in \N$, let
  \[
    D_k(x)
    := \sum_{\ell=-k}^k
         e^{2 \pi i \ell x},
    \qquad
    F_k
    := \frac{1}{k}
       \sum_{\ell=0}^{k-1}
         D_\ell,
    \qquad \text{and} \qquad
    V_k
    := (1 + e_k + e_{-k}) \cdot F_k
  \]
  denote the (one-dimensional) Dirichlet kernel, Fejér kernel, and de la Vallée Poussin kernel,
  respectively.
  We extend these one-dimensional kernels to the $d$-dimensional kernels $D_k^d, F_k^d, V_k^d$
  where $D_k^d (x_1,\dots,x_d) := D_k(x_1) \cdots D_k(x_d)$ and similarly for the other kernels.
  It is well-known (see e.g.\ \cite[Pages~9 and 15]{MuscaluSchlagHA})
  that $\| F_k \|_{L^1([-1/2,1/2])} = 1$ and that $\widehat{V_k}(j) = 1$ for $|j| \leq k$.
  Therefore, $\| V_k \|_{L^1([-1/2,1/2])} \leq 3$ and $\| V_k^d \|_{L^1(\Omega)} \leq 3^d$
  and furthermore $\widehat{V_k^d}(j) = 1$ if $\| j \|_{\ell^\infty} \leq k$.

  This easily implies that
  \(
    \widehat{g}
    = \widehat{g} \cdot \widehat{\vphantom{V}\smash{V_N^d}}
    = \widehat{g \ast \vphantom{V}\smash{V_N^d}}
  \)
  and hence $g = g \ast V_N^d$.
  Furthermore, $V_N^d \in \linspan \{ e_\ell \colon \ell \in I \}$,
  and this implies (e.g.\ by considering the Fourier coefficients)
  that $f^\ast := h \ast V_N^d \in \linspan \{ e_\ell \colon \ell \in I \}$
  is a trigonometric polynomial with at most $s$ non-zero terms.
  Moreover,
  \[
    \| g - f^\ast \|_{L^\infty}
    = \| (g - h) \ast V_N^d \|_{L^\infty}
    \leq \| V_N^d \|_{L^1} \cdot \| g - h \|_{L^\infty}
    \leq \frac{\eps}{3}
    .
  \]
  In view of \Cref{eq:CutoffEstimate} and because of $f_0 = f \cdot \varphi$, this implies
  \begin{equation}
    \begin{split}
      & \| f \cdot \varphi - f^\ast \|_{L^\infty}
      \leq \eps \\
      \text{where} \,\,
      & f^\ast \!\in\! \linspan \{ e_\ell \colon \ell \!\in\! I \}
        \text{ is a trig.\ pol.\ with at most $s$ nonzero terms}
      .
    \end{split}
    \label{eq:ApproximationBySparsePolynomial}
  \end{equation}

  \medskip{}

  \noindent
  \textbf{Step~4 (Applying bounds from compressive sensing):}
  By \cite[Corollary~12.34 (b)]{RauhutFoucartIntroToCS}, there exist universal constants
  $\kappa_5,\kappa_6 \geq 1$ such that if
  \begin{equation}
    m \geq \kappa_5 \cdot s \cdot \ln^4 (|I|)
    ,
    \label{eq:SamplePointCondition}
  \end{equation}
  then there exist sampling points $x_1,\dots,x_m \in \Omega$ such that the following holds:
  If $f^\ast = \sum_{\ell \in I} \zeta_\ell^\ast \, e_\ell \in \linspan \{ e_\ell \colon \ell \in I \}$
  is a trigonometric polynomial with at most $s$ non-zero terms (i.e., $\zeta_\ell^\ast \neq 0$ for
  at most $s$ different choice of the index $\ell$), if $\eta > 0$,
  and if $y = \bigl(f^\ast (x_i)\bigr)_{i=1}^m + e$ with $\| e \|_{\ell^2} \leq \eta \sqrt{m}$,
  then any solution $\zeta^{\sharp} = \zeta^{\sharp} (y;\eta) \in \CC^{I}$ of the optimization problem
  \begin{equation}
    \operatornamewithlimits{minimize}_{\zeta \in \CC^I} \, \| \zeta \|_{\ell^1}
    \quad \text{subject to} \quad
    \bigg\| y - \bigg(\sum_{\ell \in I} \zeta_\ell \, e_\ell (x_i) \bigg)_{i=1}^m \bigg\|_{\ell^2}
    \leq \eta \sqrt{m}
    \label{eq:CSOptimizationProblem}
  \end{equation}
  satisfies
  \[
    \| \zeta^\ast - \zeta^\sharp \|_{\ell^2}
    \leq \kappa_6 \cdot \eta .
  \]
  Because of $\eps < 1$, it is easy to see that one can ensure
  by proper choice of the constant $\kappa = \kappa(d,\sigma)$
  in our assumption regarding $m$ (\Cref{eq:AssumptionNumberOfSamples})
  that \Cref{eq:SamplePointCondition} is satisfied.

  Now, we define the map $T : \R^m \to L^2(\Omega)$ as follows:
  Given any vector $y \in \R^m$, we set $z(y) := \bigl(y_i \cdot \varphi(x_i)\bigr)_{i=1}^m$
  and we choose
  \[
    T (y)
    := \sum_{\ell \in I}
         \zeta^{\sharp}_\ell \bigl(z(y); \eps\bigr) \, e_\ell \in L^2(\Omega)
    ,
  \]
  where $\zeta^{\sharp} \bigl(z(y);\eps\bigr) \in \CC^I$ is any (fixed) solution
  of the optimization problem in \Cref{eq:CSOptimizationProblem}
  (with $\eta = \eps$ and $z(y)$ instead of $y$).
  It remains to prove that this function satisfies the required estimate.
  To this end, let $f \in B^\sigma(\Omega)$ with $\| f \|_{B^\sigma} \leq 1$ be arbitrary,
  set $f_0 := f \cdot \varphi$ for brevity, and choose
  $f^\ast = \sum_{\ell \in I} \zeta_\ell^\ast \, e_\ell$
  as in Step~3 (see \Cref{eq:ApproximationBySparsePolynomial}).
  Furthermore, let $y = \bigl(f(x_i)\bigr)_{i=1}^m$, noting that $z(y) = \bigl(f_0(x_i)\bigr)_{i=1}^m$.
  Finally, let $e := \big( f_0(x_i) - f^\ast(x_i) \big)_{i=1}^m$, noting that
  $\| e \|_{\ell^\infty} \leq \eps$ and hence $\| e \|_{\ell^2} \leq \eps \sqrt{m}$
  and that $z(y) = \bigl(f^\ast (x_i)\bigr)_{i=1}^m + e$.
  Now, the error estimate from above shows that
  $\| \zeta^\ast - \zeta^{\sharp}(z(y);\eps) \|_{\ell^2} \leq \kappa_6 \cdot \eps$
  and thus
  \begin{align*}
    \| f \cdot \varphi - T(f(x_1),\dots,f(x_m)) \|_{L^2}
    & \leq \| f_0 - f^\ast \|_{L^2} + \| f^\ast - T(y) \|_{L^2} \\
    & \leq \eps + \| \zeta^\ast - \zeta^{\sharp}(z(y);\eps) \|_{\ell^2} \\
    & \leq \eps + \kappa_6 \cdot \eps
      =    C \cdot \eps
  \end{align*}
  for $C := 1 + \kappa_6$.
  Note that $C$ is an absolute constant, independent of $d,\sigma$.
  Since $\varphi \equiv 1$ on $\Omega_0$, this proves the desired estimate.
\end{proof}

Our goal is to ``amplify'' the above lemma to yield
reconstruction on all of $\Omega$, not just on $\Omega_0$.
As a technical ingredient for the proof, we will need the following general result regarding
dilating and translating Barron functions.

\begin{lemma}\label{lem:TranslationDilation}
  Let $s \in (0,\infty)$ and $\Omega, \Theta \subset \R^d$ be arbitrary,
  and let $t_0 \in \R^d$ and $a = (a_1,\dots,a_d) \in ([-1,1] \setminus \{ 0 \})^d$ such that
  \[
    \varphi : \quad
    \R^d \to \R^d, \quad
    t \mapsto D_a \, t + t_0
  \]
  satisfies $\varphi(\Theta) \subset \Omega$.

  Then, it holds for any $f \in B^\sigma(\Omega)$ that $f \circ \varphi \in B^\sigma(\Theta)$ with
  \[
    \| f \circ \varphi \|_{B^\sigma(\Theta)}
    \leq \| f \|_{B^\sigma(\Omega)}
    .
  \]
\end{lemma}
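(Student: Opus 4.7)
The plan is to write down an explicit Fourier representation for $f \circ \varphi$ by transforming the given representation of $f$, and then to estimate the weighted $L^1$-norm of the new Fourier density directly.

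First I would fix $\delta > 0$ and pick a measurable $F : \R^d \to \CC$ representing $f$ in the sense of \Cref{eq:BarronRepresentation} with $\int_{\R^d} (1+|\xi|)^\sigma |F(\xi)| \, d\xi \leq \|f\|_{B^\sigma(\Omega)} + \delta$. For any $t \in \Theta$, plugging $\varphi(t) = D_a t + t_0 \in \Omega$ into \Cref{eq:BarronRepresentation} gives
\[
  (f\circ\varphi)(t)
  = \int_{\R^d}
      F(\xi)\, e^{2\pi i \langle t_0,\xi\rangle}\, e^{2\pi i \langle t, D_a \xi\rangle}
    \, d\xi ,
\]
using the symmetry of $D_a$. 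Then I would change variables $\eta = D_a \xi$ (legitimate since all $a_j \neq 0$), which yields
\[
  (f\circ\varphi)(t)
  = \int_{\R^d}
      G(\eta)\, e^{2\pi i \langle t,\eta\rangle}
    \, d\eta ,
  \quad
  G(\eta) := \frac{1}{|\det D_a|}\, e^{2\pi i \langle t_0, D_a^{-1} \eta\rangle}\, F(D_a^{-1}\eta).
\]

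Next I would verify that $G$ is an admissible Fourier density for $f \circ \varphi$ on $\Theta$ with controlled weighted norm. Undoing the substitution gives
\[
  \int_{\R^d} (1+|\eta|)^\sigma |G(\eta)| \, d\eta
  = \int_{\R^d} (1 + |D_a \xi|)^\sigma |F(\xi)| \, d\xi .
\]
Since $|a_j| \leq 1$ for every $j$, we have $|D_a \xi| \leq |\xi|$, so $(1+|D_a\xi|)^\sigma \leq (1+|\xi|)^\sigma$, and the right-hand side is bounded by $\|f\|_{B^\sigma(\Omega)} + \delta$. Hence $f\circ\varphi \in B^\sigma(\Theta)$ with $\|f\circ\varphi\|_{B^\sigma(\Theta)} \leq \|f\|_{B^\sigma(\Omega)} + \delta$, and letting $\delta \downarrow 0$ gives the claim.

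There is essentially no obstacle here; the only points requiring mild care are (i) keeping the measurability of $G$ (immediate since $\eta \mapsto D_a^{-1}\eta$ is a linear isomorphism), (ii) invoking the monotonicity $|D_a \xi| \leq |\xi|$ which is exactly where the hypothesis $a \in [-1,1]^d$ is used, and (iii) taking the infimum over representations of $f$ at the very end rather than fixing $F$ upfront.
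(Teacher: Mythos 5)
Your proof is correct and takes essentially the same route as the paper's: choose a near-optimal Fourier density $F$ for $f$, push it forward under the linear change of variables $\eta = D_a\xi$ (absorbing the translation into a phase factor), and use $|D_a\xi| \leq |\xi|$ to bound the weighted $L^1$-norm of the new density before letting $\delta \downarrow 0$.
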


\begin{proof}
  Let $f \in B^\sigma(\Omega)$ be arbitrary.
  For any $\delta > 0$, there then exists a measurable $F : \R^d \to \CC$ satisfying
  $f(x) = \int_{\R^d} e^{2 \pi i \langle x,\xi \rangle} \, F(\xi) \, d \xi$ for all $x \in \Omega$,
  and $\int_{\R^d} (1 + |\xi|)^\sigma \cdot |F(\xi)|\, d \xi \leq \delta + \| f \|_{B^\sigma(\Omega)}$.

  Now, define $G : \R^d \to \CC$ by
  $G(\eta) := e^{2 \pi i \langle D_a^{-1} t_0, \eta \rangle} \cdot |\det D_a|^{-1} \cdot F(D_a^{-1} \eta)$.
  On the one hand, we then see for arbitrary $t \in \Theta$
  (for which then $x := D_a \, t + t_0 \in \Omega$ by assumption) that
  \begin{align*}
    (f \circ \varphi) (t)
    & = f(D_a \, t + t_0)
      = \int_{\R^d}
          e^{2 \pi i \langle D_a \, t + t_0, \xi \rangle} \, F(\xi)
        \, d \xi \\
    & = \int_{\R^d}
          |\det D_a|
          \cdot e^{2 \pi i \langle t + D_a^{-1} t_0 , D_a \xi \rangle}
          \cdot |\det D_a|^{-1} \cdot F(D_a^{-1} D_a \, \xi)
        \, d \xi \\
    & = \int_{\R^d}
          e^{2 \pi i \langle t + D_a^{-1} t_0, \eta \rangle}
          \cdot |\det D_a|^{-1} \cdot F(D_a^{-1} \eta)
        \, d \eta \\
    & = \int_{\R^d}
          e^{2 \pi i \langle t, \eta \rangle} \cdot G(\eta)
        \, d \eta
    .
  \end{align*}
  On the other hand, we see because of $|a_j| \leq 1$ for all $j \in \FirstN{d}$
  that $|D_a \xi| \leq |\xi|$ for all $\xi \in \R^d$, and thus
  \begin{align*}
    \int_{\R^d}
      (1 + |\eta|)^\sigma \cdot |G(\eta)|
    \, d \eta
    & = \int_{\R^d}
          (1 + |D_a \, D_a^{-1} \eta|)^\sigma \cdot |\det D_a|^{-1} \cdot |F(D_a^{-1} \eta)|
        \, d \eta \\
    & = \int_{\R^d}
          (1 + |D_a \, \xi|)^\sigma \cdot |F(\xi)|
        \, d \xi \\
    & \leq \int_{\R^d}
             (1 + |\xi|)^\sigma \cdot |F(\xi)|
           \, d \xi
      \leq \delta + \| f \|_{B^\sigma(\Omega)}
    .
  \end{align*}
  This shows that $f \circ \varphi \in B^\sigma(\Theta)$ with
  $\| f \circ \varphi \|_{B^\sigma(\Theta)} \leq \delta + \| f \|_{B^\sigma(\Omega)}$.
  Since this holds for every $\delta > 0$, we are done.
\end{proof}

As announced above, we now show how \Cref{lem:ReconstructionOnSmallerSet} from above
can be ``amplified'' to yield reconstruction on all of $\Omega$, not just on $\Omega_0$.
The proof idea is roughly as follows:
We decompose the unit cube $[-\frac{1}{2}, \frac{1}{2}]^d \strut$ into countably many disjoint
$d$-dimensional rectangles $Q_{\n}^\theta$, such that each
$\strut Q_{\n}^\theta \subset P_{\n}^\theta \subset [-\frac{1}{2}, \frac{1}{2}]^d$, where
the rectangles $Q_{\n}^\theta, P_{\n}^{\theta}$ can be transformed by an affine-linear transformation
$\Phi_{\n}^\theta$ into the ``standard setting'' of the cubes
$[-\frac{1}{4}, \frac{1}{4}]^d$ and $[-\frac{1}{2}, \frac{1}{2}]^d$
considered in \Cref{lem:ReconstructionOnSmallerSet}.
This necessitates that the size of each $Q_{\n}^\theta$ is comparable
to the distance of $Q_{\n}^\theta$ to the boundary of $[-\frac{1}{2}, \frac{1}{2}]^d$.
After performing a suitable coordinate change, one can then use \Cref{lem:ReconstructionOnSmallerSet}
to obtain a reconstruction of $f|_{Q_{\n}^\theta}$ for sufficiently many $\n \in I \subset \N_0^d$
such that the measure of the remaining set
$[-\frac{1}{2}, \frac{1}{2}]^d \setminus \bigcup_{\n \in I} Q_{\n}^{\theta}$
is sufficiently small.
By ``gluing together'' all the reconstructions of all the $f_{Q_{\n}^\theta}$ for $\n \in I$,
one then obtains the overall reconstruction.

\begin{theorem}\label{thm:UpperBoundL2}
  Let $d \in \N$ and $\sigma \in (0,\infty)$ be arbitrary and set
  \[
    \lambda
    := \frac{1}{2} + \frac{\sigma}{d}
    .
  \]
  Then there exist $\kappa = \kappa(d,\sigma) \geq 1$ and $C = C(d) \geq 1$
  with the following property:
  If $\eps \in (0,1)$ and $m \in \N$ are such that
  \[
    m \geq \kappa \cdot \eps^{-1 / \lambda} \cdot \ln^4 (e + \eps^{-1}) ,
  \]
  then there exist sampling points $x_1,\dots,x_m \in \Omega := [-\frac{1}{2}, \frac{1}{2}]^d$
  and a reconstruction map $T : \R^m \to L^2(\Omega)$ satisfying
  \[
    \forall \, f \in B^\sigma (\Omega) \text{ with } \| f \|_{B^\sigma(\Omega)} \leq 1:
    \quad \| f - T(f(x_1),\dots,f(x_m)) \|_{L^2(\Omega)} \leq C \cdot \eps
    .
  \]
\end{theorem}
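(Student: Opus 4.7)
My plan is to follow the Whitney-type amplification sketched immediately before the theorem. First, I fix a decomposition $\Omega = \biguplus_{\theta \in \{\pm\}^d,\, \n \in \N^d} Q_\n^\theta$ (up to a Lebesgue-null set), where each $Q_\n^\theta$ is a product of dyadic intervals shrinking toward the faces of $\Omega$. A concrete choice is, in one dimension, $Q_n^- := [-\tfrac{1}{2} + 2^{-n-1}, -\tfrac{1}{2} + 2^{-n}]$ for $n \geq 1$ (and the mirror image for $\theta = +$), and in $d$ dimensions $Q_\n^\theta := \prod_{j=1}^d Q_{n_j}^{\theta_j}$, with volume $|Q_\n^\theta| = 2^{-|\n|-d}$ where $|\n| := n_1 + \dots + n_d$. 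The Whitney property is that the centered enlargement $P_\n^\theta$ with twice the side length in each direction still satisfies $P_\n^\theta \subset \Omega$, so the affine bijection $\Phi_\n^\theta : [-\tfrac{1}{2}, \tfrac{1}{2}]^d \to P_\n^\theta$ sending $[-\tfrac{1}{4}, \tfrac{1}{4}]^d$ onto $Q_\n^\theta$ has diagonal linear part with entries in $(0,1]$; by \Cref{lem:TranslationDilation}, $\| f \circ \Phi_\n^\theta \|_{B^\sigma([-\frac{1}{2}, \frac{1}{2}]^d)} \leq \| f \|_{B^\sigma(\Omega)} \leq 1$.

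Next, for each $(\n, \theta)$ I apply \Cref{lem:ReconstructionOnSmallerSet} to $f \circ \Phi_\n^\theta$ with a local tolerance $\eps_\n^\theta$ to be chosen below, using $m_\n^\theta \lesssim \kappa \, (\eps_\n^\theta)^{-1/\lambda} \ln^4(e + 1/\eps_\n^\theta)$ sample points in $[-\tfrac{1}{2},\tfrac{1}{2}]^d$. Their images under $\Phi_\n^\theta$ are samples of $f$ lying in $P_\n^\theta \subset \Omega$. Transplanting the local reconstruction back through $(\Phi_\n^\theta)^{-1}$ produces $T_\n^\theta : Q_\n^\theta \to \R$ with
\[
  \| f - T_\n^\theta \|_{L^2(Q_\n^\theta)}^2
  = |\det D \Phi_\n^\theta| \cdot \bigl\| f \circ \Phi_\n^\theta - T_\n^\theta \circ \Phi_\n^\theta \bigr\|_{L^2([-1/4, 1/4]^d)}^2
  \leq 2^d \, |Q_\n^\theta| \cdot (C \eps_\n^\theta)^2
\]
by change of variables (since $|\det D \Phi_\n^\theta| = |P_\n^\theta| = 2^d |Q_\n^\theta|$). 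Pieces with $\eps_\n^\theta \geq 1$ simply take $T_\n^\theta \equiv 0$ and zero samples, contributing at most $|Q_\n^\theta|$ to the squared error via \Cref{lem:BarronEmbedsIntoLInfty}. The global reconstruction is the disjoint assembly $T := \sum_{\n, \theta} \indicator_{Q_\n^\theta} \, T_\n^\theta$.

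The only nontrivial step is the budget allocation. A Lagrange computation on the constrained minimization of $\sum_{\n,\theta} |Q_\n^\theta|^{1/(2\lambda)} u_\n^{-1/(2\lambda)}$ subject to $\sum u_\n \leq \eps^2$ (with $u_\n \sim |Q_\n^\theta| (\eps_\n^\theta)^2$) suggests the geometrically graded choice $\eps_\n^\theta := A \cdot |Q_\n^\theta|^{-\lambda/(2\lambda+1)}$ with $A := \eps / \sqrt{2^d S}$, where $S := \sum_{\n, \theta} |Q_\n^\theta|^{1/(2\lambda+1)}$ is a finite constant depending only on $d$ and $\sigma$ (a convergent product of $d$ geometric series in $|\n|$). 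Then $|Q_\n^\theta| (\eps_\n^\theta)^2 = A^2 |Q_\n^\theta|^{1/(2\lambda+1)}$, so the summed squared error from sampled pieces is exactly $2^d A^2 S C^2 = C^2 \eps^2$, while the skipped-pieces tail $\sum_{|Q_\n^\theta| < A^{(2\lambda+1)/\lambda}} |Q_\n^\theta|$ is $\lesssim_d (\ln(1/\eps))^{d-1} \eps^{(2\lambda+1)/\lambda} = o(\eps^2)$ because $(2\lambda+1)/\lambda > 2$. Using $\eps_\n^\theta \geq A$ to pull the logarithm out of the sum gives
\[
  \sum_{\n, \theta} m_\n^\theta
  \lesssim \kappa \, \ln^4(e + 1/A) \cdot A^{-1/\lambda} \cdot S
  \lesssim_{d, \sigma} \eps^{-1/\lambda} \ln^4(e + 1/\eps),
\]
as required.

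The main obstacle is precisely this logarithmic bookkeeping. A naive uniform choice $\eps_\n^\theta \equiv \eps$ would force attention to $\sim \ln^d(1/\eps)$ active pieces (to keep the skipped-piece tail small), producing a spurious $\ln^{d+4}$ factor in the total sample count and violating the dimension-free log exponent demanded by \Cref{thm:IntroductionMainResult}. The graded allocation $\eps_\n^\theta \propto 2^{|\n|\lambda/(2\lambda+1)}$ spends samples generously on large pieces and frugally on small ones, collapsing the effective number of active pieces into the convergent geometric sum $S$; only the $\ln^4$ factor inherited from \Cref{lem:ReconstructionOnSmallerSet} survives.
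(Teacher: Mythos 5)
Your proposal is correct and follows essentially the same route as the paper: a Whitney-type dyadic decomposition of $\Omega$ into rectangles $Q_\n^\theta$ shrinking toward $\partial\Omega$ with enlargements $P_\n^\theta \subset \Omega$, rescaling via \Cref{lem:TranslationDilation} to invoke the local reconstruction \Cref{lem:ReconstructionOnSmallerSet}, a geometrically graded per-piece tolerance, and disjoint patching. The only (inessential) deviation is the allocation exponent --- you derive the Lagrange-optimal $\eps_\n^\theta \propto |Q_\n^\theta|^{-\lambda/(2\lambda+1)}$, whereas the paper simply fixes $\eps_\n \propto |Q_\n^\theta|^{-1/4}$; any exponent in $(0,\tfrac12)$ yields the same $O(\eps^{-1/\lambda}\ln^4(e+1/\eps))$ bound since the $\ln^4$ comes from the local lemma rather than the allocation, and your observation that a \emph{uniform} tolerance would instead incur a $\ln^{d+4}$ factor correctly identifies why a graded allocation is needed.
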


\begin{proof}
  \textbf{Step~1 (Constructing a suitable partition of $\Omega$ and associated affine-linear maps):}
  For $n \in \N_0$, define
  \[
    a_n := \frac{1}{4} \sum_{j=0}^{n-1} 2^{-j}
    \quad \text{and} \quad
    I_n^+ := [a_n, a_{n+1}),
    \quad \text{as well as} \quad
    I_n^- := - I_n^+
    .
  \]
  For later use, we observe that $a_{n+1} - a_n = \frac{1}{4} 2^{-n} = 2^{-(n+2)} \strut$
  and therefore $a_{n+1} = a_n + 2^{-(n+2)}$.
  Furthermore, we note that $\strut [0,\frac{1}{2}) = \biguplus_{n=0}^\infty I_n^+$,
  where the union is disjoint, and hence
  $(-\frac{1}{2}, \frac{1}{2}) = \bigcup_{n=0}^\infty I_n^+ \cup I_n^-$,
  where the union is disjoint up to null-sets.

  Next, define $\varphi_n^+, \psi_n^+, \varphi_n^-, \psi_n^- : \R \to \R$ by
  \[
    \varphi_n^+(x) := 2^{n+1} \cdot \left(x - \frac{a_{n+1} + a_n}{2}\right),
    \qquad
    \psi_n^+ (t) := 2^{-(n+1)} t + \frac{a_{n+1} + a_n}{2}
    ,
  \]
  as well as $\varphi_n^- (x) := \varphi_n^+ (-x)$ and $\psi_n^- (t) := -\psi_n^+ (t)$.
  It is straightforward to check that $\varphi_n^+(\psi_n^+ (t)) = t$
  and $\psi_n^+ (\varphi_n^+(x)) = x$ for all $t,x \in \R$.
  We now claim that
  \begin{equation}
    \psi_n^+ ([-\tfrac{1}{2}, \tfrac{1}{2}]) \subset [-\tfrac{1}{2}, \tfrac{1}{2}]
    \qquad \text{and} \qquad
    \varphi_n^+ (I_n^+) = [-\tfrac{1}{4}, \tfrac{1}{4} )
    .
    \label{eq:AffineMapsProperties1}
  \end{equation}
  To see that the first of these properties is true, note
  that if $t \in [-\frac{1}{2}, \frac{1}{2}]$, then we have because of
  $\frac{1}{2} (a_{n+1} + a_n) = \frac{1}{2} (2 a_n + 2^{-(n+2)}) = a_n + 2^{-(n+3)}$ that
  \[
    \psi_n^+ (t)
    \geq - 2^{-(n+2)} + a_n + 2^{-(n+3)}
    \geq - 2^{-(n+3)}
    \geq - 2^{-3}
    \geq -\frac{1}{4}
  \]
  and
  \[
    \psi_n^+ (t)
    \leq 2^{-(n+2)} + a_n + 2^{-(n+3)}
    =    \frac{1}{4} \cdot \bigg(2^{-n} + \sum_{j=0}^{n-1} 2^{-j} + 2^{-(n+1)} \bigg)
    <    \frac{1}{4} \sum_{j=0}^{\infty} 2^{-j}
    =    \frac{1}{2}
    .
  \]
  To see that the second property in \Cref{eq:AffineMapsProperties1} is true, note for
  any half-open interval $I = [a,b)$ with $a < b$ that $x \in I$ if and only if
  $- \frac{b-a}{2} \leq x - \frac{a+b}{2} < \frac{b-a}{2}$.
  Therefore, we have
  \begin{align*}
    x \in I_n^{+} \quad
    & \Longleftrightarrow \quad
    - \frac{a_{n+1} - a_n}{2} \leq x - \frac{a_n + a_{n+1}}{2} < \frac{a_{n+1} - a_n}{2} \\
    & \Longleftrightarrow \quad
    - 2^{-(n+3)} \leq x - \frac{a_n + a_{n+1}}{2} < 2^{-(n+3)} \\
    & \Longleftrightarrow \quad
    - \frac{1}{4} \leq 2^{n+1} \cdot \left(x - \frac{a_{n+1} + a_n}{2}\right) < \frac{1}{4} \\[0.2cm]
    & \Longleftrightarrow \quad
    \varphi_n^+ (x) \in [-\tfrac{1}{4}, \tfrac{1}{4} )
    .
  \end{align*}
  Finally, note that \Cref{eq:AffineMapsProperties1} also implies that
  \begin{equation}
    \psi_n^{-} ([-\tfrac{1}{2}, \tfrac{1}{2}]) \subset [-\tfrac{1}{2}, \tfrac{1}{2}]
    \qquad \text{and} \qquad
    \varphi_n^{-}(I_n^-) = [-\tfrac{1}{4}, \tfrac{1}{4} )
    .
    \label{eq:AffineMapsProperties2}
  \end{equation}

  Now, we ``lift'' the constructed partition of $(-\frac{1}{2}, \frac{1}{2})$ to a partition
  of $\Omega$ (up to a null-set).
  To this end, define $\Theta := \{ +, - \}^d$.
  Next, for $\n = (n_1,\dots,n_d) \in \N_0^d$ and $\theta = (\theta_1,\dots,\theta_d) \in \Theta$,
  define
  \[
    Q_{\n}^\theta
    := \prod_{j=1}^d
         I_{n_j}^{\theta_j}
    \quad \text{and} \quad
    \Phi_{\n}^\theta := \varphi_{n_1}^{\theta_1} \times \cdots \times \varphi_{n_d}^{\theta_d}
    \quad \text{as well as} \quad
    \Psi_{\n}^{\theta} := \psi_{n_1}^{\theta_1} \times \cdots \times \psi_{n_d}^{\theta_d}
    ,
  \]
  where we use the notation
  \[
    \gamma_1 \times \cdots \times \gamma_d : \quad
    \R^d \to \R^d, \quad
    x \mapsto \big( \gamma_1(x_1), \dots, \gamma_d(x_d) \big)
  \]
  for functions $\gamma_1,\dots,\gamma_d : \R \to \R$.
  From the preceding observations, we easily obtain the following properties:
  \begin{enumerate}
    \item $\Omega = \bigcup_{\n \in \N_0^d, \theta \in \Theta} Q_{\n}^{\theta}$,
          where the union is disjoint up to null-sets;

    \item $\Phi_{\n}^\theta, \Psi_{\n}^\theta : \R^d \to \R^d$ are bijective and inverse to each other;

    \item $\Phi_{\n}^\theta (Q_{\n}^\theta) = [-\frac{1}{4}, \frac{1}{4})^d$;

    \item $\Psi_{\n}^\theta (\Omega) \subset \Omega$.
  \end{enumerate}

  \medskip{}

  \noindent
  \textbf{Step~2 (Constructing the sampling points and the reconstruction map):}
  Let $\kappa_0 = \kappa_0(d,\sigma) \geq 1$ and $C_1 \geq 1$
  be the constants from \Cref{lem:ReconstructionOnSmallerSet}.
  Let $\eps \in (0,1)$ be arbitrary and set
  \[
    I := \big\{
           \n \in \N_0^d
           \colon
           \eps \cdot 2^{\| \n \|_{\ell^1} / 4} < 1
         \big\}
    .
  \]
  For $\n \in I$, define
  \[
    m(\n)
    := \bigg\lceil
         \kappa_0
         \cdot \Bigl( \eps \cdot 2^{\| \n \|_{\ell^1} / 4} \Bigr)^{-1/\lambda}
         \cdot \ln^4 \bigg( e + \frac{1}{\eps \cdot 2^{\| \n \|_{\ell^1} / 4}} \bigg)
       \bigg\rceil
    .
  \]
  Then, for each $\n \in I$, \Cref{lem:ReconstructionOnSmallerSet} shows that there exist
  a (\emph{not} necessarily linear) operator $T_{\n} : \R^{m(\n)} \to L^2(\Omega)$
  and points $z_1^{(\n)},\dots,z_{m(\n)}^{(\n)} \in \Omega$ satisfying
  \begin{equation}
    \big\| f - T_{\n} \bigl( f(z_1^{(\n)}), \dots, f(z_{m(\n)}^{(\n)}) \bigr) \big\|_{L^2(\Omega_0)}
    \leq C_1 \cdot \eps \cdot 2^{\| \n \|_{\ell^1} / 4}
    \label{eq:ReconstructionBoundVariablePointNumber}
  \end{equation}
  for all $f \in B^{\sigma}(\Omega)$ with $\| f \|_{B^\sigma (\Omega)} \leq 1$;
  here $\Omega_0 = [-\frac{1}{4}, \frac{1}{4}]^d$.
  Now, given $\n \in I$ and $\theta \in \Theta$, define
  \[
    z_{i}^{\n,\theta}
    := \Psi_{\n}^{\theta} (z_i^{(\n)})
    \quad \text{for} \quad
    i \in \FirstN{m(\n)}
    ,
  \]
  noting that $z_i^{\n,\theta} \in \Omega$, since $\Psi_{\n}^\theta (\Omega) \subset \Omega$.

  Next, note that if $\n \in I$, then
  \(
    \kappa_0
    \cdot \Bigl( \eps \cdot 2^{\| \n \|_{\ell^1} / 4} \Bigr)^{-1/\lambda}
    \cdot \ln^4 \bigg( e + \frac{1}{\eps \cdot 2^{\| \n \|_{\ell^1} / 4}} \bigg)
    \geq 1
  \)
  and thus
  \begin{align*}
    m(\n)
    & \leq 1 + \kappa_0
               \cdot \Bigl( \eps \cdot 2^{\| \n \|_{\ell^1} / 4} \Bigr)^{-1/\lambda}
               \cdot \ln^4 \bigg( e + \frac{1}{\eps \cdot 2^{\| \n \|_{\ell^1} / 4}} \bigg) \\
    & \leq 2 \, \kappa_0
           \cdot \Bigl( \eps \cdot 2^{\| \n \|_{\ell^1} / 4} \Bigr)^{-1/\lambda}
           \cdot \ln^4 \bigg( e + \frac{1}{\eps \cdot 2^{\| \n \|_{\ell^1} / 4}} \bigg) \\[0.2cm]
    & \leq 2 \kappa_0
           \cdot \eps^{-1/\lambda}
           \cdot \ln^4 (e + \eps^{-1})
           \cdot 2^{- \| \n \|_{\ell^1} / (4 \lambda)}
    .
  \end{align*}
  Therefore, we see that if the constant $\kappa = \kappa(d,\sigma) \geq 1$
  from the statement of the theorem is chosen sufficiently large, then
  \begin{align*}
    \sum_{\n \in I, \theta \in \Theta}
      m(\n)
    & \leq 2^{d+1} \kappa_0
           \cdot \eps^{-1/\lambda}
           \cdot \ln^4 (e + \eps^{-1})
           \cdot \sum_{\n \in \N_0^d}
                   2^{- \| \n \|_{\ell^1} / (4 \lambda)} \\
    & \leq \kappa
           \cdot \eps^{-1/\lambda}
           \cdot \ln^4 (e + \eps^{-1})
    \leq m .
  \end{align*}

  Therefore, we can choose $x_1,\dots,x_m \in \Omega$ such that
  \[
    \bigcup_{\n \in I, \theta \in \Theta}
      \{ z_1^{\n,\theta}, \dots, z_{m(\n)}^{\n,\theta} \}
    \subset \{ x_1,\dots,x_m \}
    .
  \]
  As a consequence of this choice, for each $\n \in I$ and $\theta \in \Theta$ we can choose a map
  $\pi_{\n}^\theta : \R^m \to \R^{m(\n)}$ (a projection onto some of the coordinates) such that
  \[
    \pi_{\n}^\theta \Big( \big( f(x_i) \big)_{i=1}^m \Big)
    = \big( f(z_\ell^{\n,\theta}) \big)_{\ell=1}^{m(\n)}
    = \big( (f \circ \Psi_{\n}^\theta) (z_\ell^{(\n)}) \big)_{\ell=1}^{m(\n)}
    .
  \]
  Now, we define the reconstruction operator $T : \R^m \to L^2(\Omega)$ as follows:
  Given $y \in \R^m$, we set
  \[
    F_{\n,\theta}^{(y)}
    := T_{\n} (\pi_{\n}^\theta (y))
    \in L^2(\Omega)
  \]
  and
  \[
    T(y)
    := \sum_{\n \in I, \theta \in \Theta}
       \Big[
         \indicator_{Q_{\n}^\theta}
         \cdot \Big(
                 F_{\n,\theta}^{(y)}
                 \circ \Phi_{\n}^{\theta}
               \Big)
       \Big]
    \in L^2(\Omega)
    .
  \]

  \medskip{}

  \noindent
  \textbf{Step~3 (Completing the proof):}
  Let $f \in B^{\sigma}(\Omega)$ with $\| f \|_{B^\sigma (\Omega)} \leq 1$ be arbitrary
  and set
  \[
    y
    := \big( f(x_i) \big)_{i=1}^m
    \in \R^m.
  \]
  For each $\n \in I$ and $\theta \in \Theta$, let
  \[
    g_{\n}^\theta
    := f \circ \Psi_{\n}^\theta,
  \]
  considered as a function on $\Omega$; this is well-defined,
  since $\Psi_{\n}^\theta (\Omega) \subset \Omega$.
  Furthermore, since $\Psi_{\n}^\theta (\Omega) \subset \Omega$ and since the affine-linear map
  $\Psi_{\n}^\theta$ is of the form $\Psi_{\n}^\theta (x) = D_a \, x + b$ for a certain
  $b = b(\n,\theta) \in \R^d$ and $a_j = 2^{-(n_j + 1)} \in (0,1]$ if $\theta_j \in \{ + \}$
  and $a_j = - 2^{-(n_j + 1)} [-1,0)$ if $\theta_j \in \{ - \}$, we see by \Cref{lem:TranslationDilation}
  that $g_{\n}^\theta \in B^{\sigma}(\Omega)$ with $\| g_{\n}^\theta \|_{B^\sigma(\Omega)} \leq 1$.
  Therefore, \Cref{eq:ReconstructionBoundVariablePointNumber} shows that
  \[
    h_{\n}^\theta
    := T_{\n} \big( g_{\n}^\theta (z_\ell^{(\n)}) \big)_{\ell=1}^{m(\n)}
    \in L^2(\Omega)
    \quad \text{satisfies} \quad
    \| g_{\n}^\theta - h_{\n}^\theta \|_{L^2(\Omega_0)}
    \leq C_1 \cdot \eps \cdot 2^{\| \n \|_{\ell^1} / 4}
    .
  \]

  By choice of $\pi_{\n}^\theta$, we have
  \[
    \pi_{\n}^\theta (y)
    = \pi_{\n}^\theta \Big( \big( f(x_i) \big)_{i=1}^m \Big)
    = \Big( (f \circ \Psi_{\n}^\theta) (z_\ell^{(\n)}) \Big)_{\ell=1}^{m(\n)}
    = \big( g_{\n}^\theta (z_\ell^{(\n)}) \big)_{\ell=1}^{m(\n)}
  \]
  and thus $F_{\n,\theta}^{(y)} \!=\! T_{\n} (\pi_{\n}^\theta (y)) \!=\! h_{\n}^\theta$.
  Since $|\det D \Phi_{\n}^\theta (x)| \!=\! \prod_{j=1}^d 2^{n_j + 1} \!=\! 2^d \, 2^{\| \n \|_{\ell^1}}$
  and $\Phi_{\n}^\theta (Q_{\n}^\theta) \subset \Omega_0$ and since
  $f = f \circ \Psi_{\n}^\theta \circ \Phi_{\n}^\theta = g_{\n}^\theta \circ \Phi_{\n}^\theta$,
  we see using a change of variables that
  \begin{equation}
    \begin{split}
      \big\| f - (h_{\n}^\theta \circ \Phi_{\n}^\theta) \big\|_{L^2(Q_{\n}^\theta)}^2
      & = \big\| (g_{\n}^\theta - h_{\n}^\theta) \circ \Phi_{\n}^\theta \big\|_{L^2(Q_{\n}^\theta)}^2 \\
      & = 2^{-d} \,
          2^{- \| \n \|_{\ell^1}}
          \cdot \big\| g_{\n}^\theta - h_{\n}^\theta \big\|_{L^2(\Phi_{\n}^\theta (Q_{\n}^\theta))}^2 \\
      & \leq 2^{-d} \,
             2^{- \| \n \|_{\ell^1}}
             \cdot \big\| g_{\n}^\theta - h_{\n}^\theta \big\|_{L^2(\Phi_{\n}^\theta (\Omega_0))}^2 \\
      & \leq \frac{C_1^2}{2^d} \cdot \eps^2 \cdot 2^{- \| \n \|_{\ell^1} / 2}
      .
    \end{split}
    \label{eq:ErrorEstimateOnEachRectangle}
  \end{equation}

  Now, since $\Omega = \bigcup_{\n \in \N_0^d, \theta \in \Theta} Q_{\n}^\theta$ up to a null-set,
  where the $Q_{\n}^\theta$ are disjoint up to null-sets, and since we have
  \(
    T (y)
    = F_{\n,\theta}^{(y)} \circ \Phi_{\n}^\theta
    = h_{\n}^\theta \circ \Phi_{\n}^\theta
  \)
  on $Q_{\n}^\theta$ for $\n \in I$, whereas $T(y) \equiv 0$ on $Q_{\n}^\theta$ for $\n \notin I$,
  we see
  \begin{align*}
    & \big\| f - T \big( f(x_1),\dots,f(x_m) \big) \big\|_{L^2(\Omega)}^2
      = \| f - T(y) \|_{L^2(\Omega)}^2 \\
    & = \sum_{\n \in \N_0^d, \theta \in \Theta}
        \| f - T(y) \|_{L^2(Q_{\n}^\theta)}^2 \\
    & = \sum_{\n \in I, \theta \in \Theta}
          \| f - (h_{\n}^\theta \circ \Phi_{\n}^\theta) \|_{L^2(Q_{\n}^\theta)}^2
        + \sum_{\n \in \N_0^d \setminus I, \theta \in \Theta}
            \| f \|_{L^2(Q_{\n}^\theta)}^2 \\
    & \overset{(\dagger)}{\leq}
        \sum_{\n \in I, \theta \in \Theta}
          \frac{C_1^2}{2^d} \eps^2 2^{-\| \n \|_{\ell^1} / 2}
        + \sum_{\n \in \N_0^d \setminus I, \theta \in \Theta}
            \lebesgue (Q_{\n}^\theta) \\
    & \overset{(\ddagger)}{\leq}
        C_1^2 \, \eps^2
        \sum_{\n \in I, \theta \in \Theta}
          2^{-\frac{1}{2} (n_1 + \cdots + n_d)}
        + \eps^2
          \sum_{\n \in \N_0^d \setminus I, \theta \in \Theta}
            2^{-\frac{1}{2} (n_1 + \cdots + n_d)} \\
    & \leq C_1^2 \, \eps^2 \sum_{\n \in \N_0^d} 2^{-\frac{1}{2} (n_1 + \cdots + n_d)}
      =:   C_2^2 \, \eps^2
  \end{align*}
  for a suitable $C_2 = C_2(d, C_1) = C_2(d) \geq 1$.
  Here, the step marked with $(\dagger)$ is justified by \Cref{eq:ErrorEstimateOnEachRectangle}
  and since $\| f \|_{L^\infty(\Omega)} \leq \| f \|_{B^\sigma (\Omega)} \leq 1$
  by \Cref{lem:BarronEmbedsIntoLInfty}.
  The step marked with $(\ddagger)$ is justified by noting that if $\n \notin I$,
  then $\eps \cdot 2^{\| \n \|_{\ell^1} / 4} \geq 1$ and hence $2^{-\| \n \|_{\ell^1} / 4} \leq \eps$,
  so that
  \begin{align*}
    \lebesgue (Q_{\n}^\theta)
    & = \prod_{j=1}^d
          2^{-(n_j + 2)}
      = 2^{-2 d} 2^{-\| \n \|_{\ell^1}} \\
    & \leq 2^{-d} 2^{-\| \n \|_{\ell^1} / 2} \cdot \big( 2^{- \| \n \|_{\ell^1} / 4} \big)^2
      \leq 2^{-d} \, \eps^2 \, 2^{-\| \n \|_{\ell^1} / 2}
    .
  \end{align*}
  Overall, we have thus shown as claimed that
  \[
    \big\| f - T \big( f(x_1),\dots,f(x_m) \big) \big\|_{L^2(\Omega)}
    \leq C_2 \, \eps
  \]
  for all $f \in B^\sigma (\Omega)$ with $\| f \|_{B^\sigma (\Omega)} \leq 1$.
\end{proof}

As a corollary of \Cref{thm:UpperBoundL2}, we get the following upper bound
on the $L^2$-sampling numbers for the Barron spaces.

\begin{corollary}\label{cor:L2SamplingNumberUpperBound}
  Let $d \in \N$ and $\sigma \in (0,\infty)$ be arbitrary and set
  $\Omega := [-\frac{1}{2}, \frac{1}{2}]^d$ and
  \[
    \lambda := \frac{1}{2} + \frac{\sigma}{d} 
    .
  \]
  Then there exists a constant $C = C(d,\sigma) > 0$ such that
  \[
    s_m \bigl(B^\sigma(\Omega); L^2(\Omega)\bigr)
    \leq C \cdot \Big( \ln^4 (e + m) \Big/ m \Big)^{\lambda}
    \qquad \forall \, m \in \N .
  \]
\end{corollary}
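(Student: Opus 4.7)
The plan is to deduce the corollary from \Cref{thm:UpperBoundL2} by solving the implicit relation between $\eps$ and $m$. Given $m \in \N$, I would set
\[
  \eps_m := C_0 \cdot \bigl( \ln^4(e + m) \big/ m \bigr)^{\lambda}
\]
for a constant $C_0 = C_0(d,\sigma) \geq 1$ to be chosen, and then either invoke \Cref{thm:UpperBoundL2} to obtain the claim with $C = C_0 \cdot C_{\mathrm{thm}}$, or handle the regime where the bound is trivial.

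First, I would dispose of the easy case $\eps_m \geq 1$: by \Cref{lem:BarronEmbedsIntoLInfty}, for every $f \in B^\sigma(\Omega)$ we have $\| f \|_{L^2(\Omega)} \leq \lebesgue(\Omega)^{1/2} \| f \|_{L^\infty(\Omega)} \leq \| f \|_{B^\sigma(\Omega)}$, so the choice $T \equiv 0$ and $m$ arbitrary sampling points gives $s_m(B^\sigma(\Omega); L^2(\Omega)) \leq 1 \leq \eps_m \leq C_0 \cdot (\ln^4(e+m)/m)^\lambda$.

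In the regime $\eps_m < 1$, I need to verify that the hypothesis of \Cref{thm:UpperBoundL2} is met, namely $m \geq \kappa \cdot \eps_m^{-1/\lambda} \cdot \ln^4(e + \eps_m^{-1})$. By construction, $\eps_m^{-1/\lambda} = C_0^{-1/\lambda} \cdot m / \ln^4(e+m)$, and $\eps_m^{-1} \leq C_0^{-1} \cdot m^\lambda$, which yields a bound of the form $\ln(e + \eps_m^{-1}) \leq C_1(d,\sigma) \cdot \ln(e+m)$ and hence $\ln^4(e + \eps_m^{-1}) \leq C_1^4 \cdot \ln^4(e+m)$. Substituting,
\[
  \kappa \cdot \eps_m^{-1/\lambda} \cdot \ln^4(e + \eps_m^{-1})
  \leq \kappa \cdot C_1^4 \cdot C_0^{-1/\lambda} \cdot m ,
\]
so choosing $C_0$ so large that $\kappa \cdot C_1^4 \cdot C_0^{-1/\lambda} \leq 1$ secures the hypothesis. \Cref{thm:UpperBoundL2} then supplies sampling points $x_1,\dots,x_m$ and a reconstruction $T$ with error at most $C_{\mathrm{thm}} \cdot \eps_m$ uniformly on the unit ball of $B^\sigma(\Omega)$, which is precisely the desired bound.

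The only nontrivial point is the logarithm-chasing in the last step, i.e.\ checking that the ``inner'' logarithm $\ln(e+\eps_m^{-1})$ can be controlled by $\ln(e+m)$ up to a dimension-dependent constant; everything else is a direct substitution. Since $\lambda$ is bounded away from $0$ and $\eps_m^{-1}$ grows only polynomially in $m$, this absorbs cleanly into the constant $C = C(d,\sigma)$.
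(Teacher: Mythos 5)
Your proof is correct and follows essentially the same route as the paper: set $\eps_m$ proportional to $(\ln^4(e+m)/m)^\lambda$, dispatch the regime $\eps_m \geq 1$ via the trivial reconstruction $T \equiv 0$ and \Cref{lem:BarronEmbedsIntoLInfty}, and in the regime $\eps_m < 1$ verify the hypothesis of \Cref{thm:UpperBoundL2} by bounding $\ln(e + \eps_m^{-1}) \leq C_1 \ln(e+m)$ using that $\eps_m^{-1}$ grows only polynomially in $m$. The only cosmetic difference is that the paper pins down the constant $C = (C_3 \kappa)^\lambda$ up front whereas you defer its choice, but the calculation is the same.
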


\begin{proof}
  It is easy to see that there exist constants $C_ 1 = C_1(\lambda) = C_1(d,\sigma) \geq 1$
  and $C_ 2 = C_2(C_1,\lambda) = C_2(d,\sigma) \geq 1$ satisfying for any $m \in \N$ that
  \[
    \ln (e + m^\lambda)
    \leq \ln \bigl( C_1 \cdot (e + m)^\lambda \bigr)
    \leq \ln (C_1) + \lambda \, \ln(e + m)
    \leq C_2 \cdot \ln(e + m)
    .
  \]
  Now, set $C_3 := C_2^4$ and $C := (C_3 \kappa)^{\lambda}$,
  where $\kappa = \kappa (d,\sigma) \geq 1$ is as in \Cref{thm:UpperBoundL2}.
  Note that $C \geq 1$.
  Finally, let $m \in \N$ be arbitrary and set
  \[
    \eps := \eps(m) := C \cdot \Big( \ln^4 (e + m) \Big/ m \Big)^{\lambda}
    .
  \]
  To prove the corollary, it is enough to show that
  $s_m (B^\sigma(\Omega); L^2(\Omega)) \leq C_4 \cdot \eps$
  for a constant $C_4 = C_4(d,\sigma) \geq 1$.
  To do so, we distinguish two cases.

  \smallskip{}

  \noindent
  \textbf{Case~1 ($\eps \geq 1$):} In this case, we choose $x_1,\dots,x_m \in \Omega$ arbitrary
  and choose $T : \R^m \to L^2(\Omega), y \mapsto 0$.
  Then \Cref{lem:BarronEmbedsIntoLInfty} shows for every $f \in B^\sigma(\Omega)$
  with $\| f \|_{B^\sigma(\Omega)} \leq 1$ that
  \[
    \| f - T(f(x_1),\dots,f(x_m)) \|_{L^2(\Omega)}
    \leq \| f \|_{L^\infty(\Omega)}
    \leq 1
    \leq \eps .
  \]
  This easily implies $s_m (B^\sigma(\Omega); L^2(\Omega)) \leq \eps$.

  \medskip{}

  \noindent
  \textbf{Case~2 ($\eps < 1$):} In this case, we first note because of
  $\eps \geq C \cdot m^{-\lambda} \geq m^{-\lambda}$ that
  \(
    \ln^4(e + \eps^{-1})
    \leq \ln^4(e + m^{\lambda})
    \leq C_2^4 \cdot \ln^4(e + m)
    = C_3 \cdot \ln^4(e + m)
  \)
  and thus
  \[
    \kappa \cdot \eps^{-1/\lambda} \cdot \ln^4(e + \eps^{-1})
    \leq C_3 \kappa \cdot \ln^4(e + m) \cdot C^{-1/\lambda} \cdot \frac{m}{\ln^4(e + m)}
    = m .
  \]
  Therefore, \Cref{thm:UpperBoundL2} yields a constant $C_4 = C_4(d) \geq 1$ and sampling points
  $x_1,\dots,x_m \in \Omega$ and a map $T : \R^m \to L^2(\Omega)$ satisfying for all
  $f \in B^\sigma(\Omega)$ with $\| f \|_{B^\sigma(\Omega)} \leq 1$ that
  \[
    \| f - T(f(x_1),\dots,f(x_m)) \|_{L^2(\Omega)}
    \leq C_4 \cdot \eps
    .
  \]
  As seen above, this completes the proof.
\end{proof}

\subsection{Upper bounds for \texorpdfstring{$L^\infty$}{L ᪲} sampling numbers}%
\label{sub:LInftyUpperBounds}

Our proof of the upper bounds for the $L^\infty$ sampling numbers of the Barron spaces
will simply be based on noting that $B^\sigma(\Omega) \hookrightarrow C^\sigma(\Omega)$.
Here, for $\sigma \in \N$, we define $C^\sigma(\R^d)$ as the space of all $\sigma$-times continuously
differentiable functions with all partial derivatives up to order $\sigma$ being bounded;
the associated norm is given by
\[
  \| f \|_{C^\sigma(\R^d)}
  := \max_{\alpha \in \N_0^d, |\alpha| \leq \sigma}
       \sup_{x \in \R^d}
         |\partial^\alpha f (x)|
  .
\]
Moreover, for $\sigma \in (0,\infty) \setminus \N$, we write $\sigma = k + r$
with $k \in \N_0$ and $r \in [0,1)$ and then define
$C^\sigma (\R^d) := \{ f \in C^k (\R^d) \colon |f|_{C^\sigma(\R^d)} < \infty \}$, where
\[
  |f|_{C^\sigma(\R^d)}
  := \max_{\alpha \in \N_0^d, |\alpha| = k} \,\,
       \sup_{x,y \in \R^d, x \neq y} \,\,
         \frac{|\partial^\alpha f (x) - \partial^\alpha f(y)|}{|x-y|^{r}}
  .
\]
We then define $\| f \|_{C^\sigma(\R^d)} := \| f \|_{C^k (\R^d)} + |f|_{C^\sigma(\R^d)}$.

Finally, given any non-empty set $\emptyset \neq \Omega \subset \R^d$, we define
\begin{equation}
  \begin{split}
    & C^\sigma (\Omega)
      := \big\{ f|_{\Omega} \colon f \in C^\sigma(\R^d) \big\} \\
    \text{with norm} \quad
    & \| f \|_{C^\sigma(\Omega)}
      := \inf \big\{ \| g \|_{C^\sigma(\R^d)} \colon g \in C^\sigma(\R^d) \text{ and } g|_\Omega = f \big\}
    .
  \end{split}
  \label{eq:HoelderSpacesOnDomains}
\end{equation}
The following lemma establishes the embedding $B^\sigma(\Omega) \hookrightarrow C^\sigma(\Omega)$
on which we will rely.

\begin{lemma}\label{lem:BarronIntoHoelderEmbedding}
  For every $d \!\in\! \N$ and $\sigma \in (0,\infty)$ there is a constant $C = C(\sigma) > 0$
  such that for every $\emptyset \neq \Omega \subset \R^d$ and every $f \in B^\sigma(\Omega)$,
  it holds that $f \in C^\sigma(\Omega)$ with
  $\| f \|_{C^\sigma(\Omega)} \leq C \cdot \| f \|_{B^\sigma(\Omega)}$.
\end{lemma}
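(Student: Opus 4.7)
My plan is to construct an explicit extension of $f$ to all of $\R^d$ using its Barron representation and then bound the $C^\sigma(\R^d)$ norm of this extension directly via estimates on its Fourier data.

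Fix $\delta > 0$ and choose $F \in L^1_w(\R^d)$ with $\int (1+|\xi|)^\sigma |F(\xi)|\, d\xi \leq \|f\|_{B^\sigma(\Omega)} + \delta$ and $f(x) = \int F(\xi) e^{2\pi i \langle x,\xi\rangle}\, d\xi$ for $x \in \Omega$. Define $\widetilde{f}: \R^d \to \CC$ by the same formula for all $x \in \R^d$; this is well-defined and agrees with $f$ on $\Omega$, so any $C^\sigma(\R^d)$ bound for $\widetilde f$ gives the corresponding $C^\sigma(\Omega)$ bound for $f$. Write $\sigma = k + r$ with $k = \lfloor \sigma \rfloor \in \N_0$ and $r \in [0,1)$ (with $r = 0$ iff $\sigma \in \N$, in which case the Hölder seminorm estimate below is not needed).

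For any multi-index $\alpha \in \N_0^d$ with $|\alpha| \leq k$, the integrand satisfies
\[
\bigl| (2\pi i \xi)^\alpha F(\xi) e^{2\pi i \langle x,\xi\rangle} \bigr|
\leq (2\pi)^{|\alpha|}\, |\xi|^{|\alpha|}\, |F(\xi)|
\leq (2\pi)^{\sigma}\, (1+|\xi|)^{\sigma}\, |F(\xi)|,
\]
which is integrable. Hence $\widetilde f \in C^k(\R^d)$ by differentiation under the integral sign, and
\[
\sup_{x\in\R^d} |\partial^\alpha \widetilde f(x)|
\leq (2\pi)^\sigma \bigl(\|f\|_{B^\sigma(\Omega)} + \delta\bigr).
\]

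For the Hölder seminorm when $r \in (0,1)$, I use the elementary inequality $|e^{it}-e^{is}| \leq \min(2, |t-s|) \leq 2^{1-r} |t-s|^r$ for all $s,t \in \R$. Applied to the derivative under the integral sign, this gives for $|\alpha|=k$ that
\[
|\partial^\alpha \widetilde f(x) - \partial^\alpha \widetilde f(y)|
\leq \int_{\R^d} (2\pi)^k |\xi|^k |F(\xi)| \cdot 2^{1-r} (2\pi)^r |\xi|^r |x-y|^r \, d\xi
\leq 2 (2\pi)^\sigma |x-y|^r \bigl(\|f\|_{B^\sigma(\Omega)} + \delta\bigr),
\]
where the last step uses $|\xi|^{k+r} \leq (1+|\xi|)^\sigma$. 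Combining the two bounds yields $\|\widetilde f\|_{C^\sigma(\R^d)} \leq C(\sigma) (\|f\|_{B^\sigma(\Omega)} + \delta)$, and letting $\delta \downarrow 0$ finishes the proof.

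There is essentially no obstacle here: the argument is a routine differentiation-under-the-integral-sign calculation, with the only mild subtlety being the passage from the pointwise bound $|e^{it}-e^{is}|\leq \min(2,|t-s|)$ to the Hölder bound with exponent $r$, which is handled by interpolation of $\min$. The constant obtained depends only on $\sigma$ (through $(2\pi)^\sigma$ and the factor $2$), so it is independent of $d$ and $\Omega$, as claimed.
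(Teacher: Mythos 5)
Your proposal is correct and is essentially the same proof as the paper's: both extend $f$ to all of $\R^d$ via the Fourier representation $x \mapsto \int F(\xi)e^{2\pi i\langle x,\xi\rangle}\,d\xi$, differentiate under the integral sign, and handle the H\"older seminorm via $|e^{it}-e^{is}|\le 2^{1-r}|t-s|^r$. The one small detail you elide is that this extension $\widetilde f$ is in general complex-valued while $f$ is real-valued; the paper accordingly restricts $\operatorname{Re}(\widetilde f)$ rather than $\widetilde f$ itself, which costs nothing since $\|\operatorname{Re}(\widetilde f)\|_{C^\sigma(\R^d)}\le\|\widetilde f\|_{C^\sigma(\R^d)}$ and $\operatorname{Re}(\widetilde f)|_\Omega = f$ --- a one-line fix, but worth stating.
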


\begin{proof}
  Let $f \in B^\sigma(\Omega)$ be arbitrary.
  Then, given any $\delta > 0$, there exists a measurable function $F : \R^d \to \CC$
  satisfying $\int_{\R^d} (1 + |\xi|)^\sigma |F(\xi)| \, d \xi \leq \delta + \| f \|_{B^\sigma(\Omega)}$
  and $f(x) = \int_{\R^d} F(\xi) e^{2 \pi i \langle x, \xi \rangle}\, d \xi$ for all $x \in \Omega$.
  Thus, if we define $g : \R^d \to \CC$ by $g := \Fourier^{-1} F$,
  then $f \!=\! \operatorname{Re}(g)|_{\Omega}$.
  We claim that $\| g \|_{C^\sigma(\R^d)} \!\leq\! (2 \pi)^{\sigma+2} (\delta + \| f \|_{B^\sigma(\Omega)})$;
  once this is shown, it follows that the same holds for $\operatorname{Re}(g)$, and this then
  implies $\| f \|_{C^\sigma(\Omega)} \leq (2 \pi)^{\sigma + 2} (\delta + \| f \|_{B^\sigma(\Omega)})$
  for arbitrary $\delta > 0$, from which the claim of the lemma follows.

  We first consider the case where $\sigma \in \N$.
  Note that we can ``differentiate under the integral'' in the formula
  $g(x) = \int_{\R^d} F(\xi) \, e^{2 \pi i \langle x,\xi \rangle} \, d \xi$
  to see for any multiindex $\alpha \in \N_0^d$ with $|\alpha| \leq \sigma$ that
  \begin{equation}
    \partial^\alpha g(x)
    = \int_{\R^d}
        F(\xi) \, (2 \pi i \xi)^\alpha \, e^{2 \pi i \langle x,\xi \rangle}
      \, d \xi
    \label{eq:DerivativeFourierRepresentation}
  \end{equation}
  and thus
  \begin{align*}
    |\partial^\alpha g(x)|
    & \leq (2 \pi)^{|\alpha|}
           \int_{\R^d}
             |F(\xi)| \, |\xi^\alpha|
           \, d \xi \\
    & \leq (2 \pi)^{\sigma}
           \int_{\R^d}
             |F(\xi)| \, (1 + |\xi|)^{\sigma}
           \, d \xi \\
    & \leq (2 \pi)^{\sigma} (\delta + \| f \|_{B^\sigma(\Omega)})
  \end{align*}
  for all $x \in \R^d$.
  This implies that the stated estimate holds for $\sigma \in \N$.

  \smallskip{}

  Finally, we consider the case where $\sigma \in (0,\infty) \setminus \N$.
  Write $\sigma = k + r$ with $k \in \N_0$ and $r \in [0,1)$.
  By ``differentiating under the integral'' as above, we see for $\alpha \in \N_0^d$
  with $|\alpha| \leq k$ that \Cref{eq:DerivativeFourierRepresentation} holds
  and that
  \begin{equation}
    \| g \|_{C^k(\R^d)}
    \leq (2 \pi)^k \cdot (\delta + \| f \|_{B^\sigma(\Omega)}).
    \label{eq:BarronCSigmaNormNonHoelderPart}
  \end{equation}
  Next, note that $|e^{i t} - e^{i s}| \leq 2$ and $|e^{i t} - e^{i s}| \leq |t - s|$,
  which implies that $|e^{i t} - e^{i s}| \leq 2^{1 - r} |t-s|^r$ for $t,s \in \R$.
  Therefore,
  \begin{align*}
    |\partial^\alpha g(x) - \partial^\alpha g(y)|
    & \leq (2 \pi)^{|\alpha|}
           \int_{\R^d}
             |F(\xi)|
             \cdot |\xi^\alpha|
             \cdot |e^{2 \pi i \langle x, \xi \rangle} - e^{2 \pi i \langle y, \xi \rangle}|
           \, d \xi \\
    & \leq (2 \pi)^{\sigma} 2^{1 - r}
           \int_{\R^d}
             |F(\xi)|
             \cdot |\xi^\alpha|
             \cdot (2 \pi \cdot |\langle x-y, \xi \rangle|)^{r}
           \, d \xi \\
    & \leq (2 \pi)^{\sigma} (2 \pi)^{1 - r} (2 \pi)^r
           \int_{\R^d}
             |F(\xi)| \cdot (1 + |\xi|)^{k} |x-y|^r |\xi|^r
           \, d \xi \\
    & \leq (2 \pi)^{\sigma + 1}
           |x-y|^r
           \int_{\R^d}
             |F(\xi)| \cdot (1 + |\xi|)^{\sigma}
           \, d \xi \\
    & \leq (2 \pi)^{\sigma + 1} (\delta + \| f \|_{B^\sigma(\Omega)}) \cdot |x-y|^r
  \end{align*}
  for all $x,y \in \R^d$, meaning
  $|g|_{C^\sigma(\R^d)} \leq (2 \pi)^{\sigma + 1} (\delta + \| f \|_{B^\sigma(\Omega)})$.
  Combined with \Cref{eq:BarronCSigmaNormNonHoelderPart}, this implies that
  $\|g\|_{C^\sigma(\R^d)} \leq (2 \pi)^{\sigma + 2} \cdot (\delta + \| f \|_{B^\sigma(\Omega)})$,
  as claimed.
\end{proof}

We now deduce the upper bound for the $L^\infty$ sampling number for the Barron space
$B^{\sigma}(\Omega)$ from analogous bounds for $C^\sigma(\Omega)$.

\begin{proposition}\label{prop:LInftyUpperBound}
  Let $d \in \N$ and $\sigma \in (0,\infty)$, and let $\Omega := [-\frac{1}{2}, \frac{1}{2}]^d$.
  There then exists a constant $C = C(d,\sigma) > 0$ such that for every $m \in \N$, we have
  \[
    s_m \bigl(B^\sigma(\Omega); L^\infty(\Omega)\bigr)
    \leq C \cdot m^{-\sigma/d}
    .
  \]
\end{proposition}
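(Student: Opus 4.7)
The plan is to reduce the claim to a classical sampling-number bound for Hölder spaces by invoking the embedding just established in \Cref{lem:BarronIntoHoelderEmbedding}. That lemma furnishes a constant $C_0 = C_0(\sigma) \geq 1$ such that $\| f \|_{C^\sigma(\Omega)} \leq C_0 \cdot \| f \|_{B^\sigma(\Omega)}$ for every $f \in B^\sigma(\Omega)$, so the unit ball of $B^\sigma(\Omega)$ is contained in $C_0$ times the unit ball of $C^\sigma(\Omega)$. Taking suprema over unit balls and infima over sampling/reconstruction schemes in the definition of $s_m$ yields
\[
  s_m \bigl( B^\sigma(\Omega); L^\infty(\Omega) \bigr)
  \leq C_0 \cdot s_m \bigl( C^\sigma(\Omega); L^\infty(\Omega) \bigr) ,
\]
so it suffices to prove the classical bound $s_m \bigl( C^\sigma(\Omega); L^\infty(\Omega) \bigr) \leq C_1 \cdot m^{-\sigma/d}$.

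For the Hölder-space bound I would give the standard piecewise-polynomial construction. Write $\sigma = k + r$ with $k \in \N_0$ and $r \in [0,1)$, set $n := \lfloor (m / (k+1)^d)^{1/d} \rfloor$, so that $n \asymp m^{1/d}$, and decompose $\Omega$ into $n^d$ closed subcubes $Q_1,\dots,Q_{n^d}$ of side length $1/n$. On each subcube $Q_j$ place a fixed polynomial-unisolvent set of $(k+1)^d$ sample nodes (e.g.\ the tensor-product grid of Chebyshev or equispaced nodes), which altogether uses at most $n^d (k+1)^d \leq m$ samples. The reconstruction $T(y) \in L^\infty(\Omega)$ is then defined as the piecewise polynomial whose restriction to $Q_j$ is the unique polynomial $P_j$ of coordinate-wise degree at most $k$ interpolating the corresponding entries of $y$.

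A routine Bramble--Hilbert / Taylor-remainder argument on the unit cube, rescaled to each $Q_j$ of diameter $\sqrt{d}/n$, shows that
\[
  \| f - P_j \|_{L^\infty(Q_j)}
  \leq C_2 \cdot n^{-\sigma} \cdot \| f \|_{C^\sigma(\Omega)}
  \qquad \forall \, f \in C^\sigma(\Omega),
\]
with $C_2 = C_2(d,\sigma)$. Patching across the finitely many cubes then delivers $\| f - T(y) \|_{L^\infty(\Omega)} \leq C_2 \cdot n^{-\sigma} \cdot \| f \|_{C^\sigma(\Omega)} \leq C_3 \cdot m^{-\sigma/d} \cdot \| f \|_{C^\sigma(\Omega)}$, completing the argument. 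One could alternatively skip the construction by citing a textbook reference for the $C^\sigma$-sampling rate (which matches the Sobolev rate \Cref{eq:SobolevSamplingNumbers} for $p = q = \infty$). No genuine obstacle is expected: \Cref{lem:BarronIntoHoelderEmbedding} absorbs all the Barron-specific content, and what remains is purely bookkeeping around the constant $(k+1)^d$ and the standard local polynomial approximation estimate.
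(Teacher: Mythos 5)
Your proposal takes essentially the same route as the paper: reduce via \Cref{lem:BarronIntoHoelderEmbedding} to the $L^\infty$ sampling rate of the Hölder space $C^\sigma(\Omega)$, then rescale. The only difference is that the paper cites a reference (Krieg--Sonnleitner) for the Hölder-space bound, whereas you sketch the standard piecewise-polynomial interpolation argument explicitly — a point you yourself note as an alternative — so the underlying argument and the role played by the embedding lemma coincide.
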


\begin{proof}
  \cite[Theorem~2]{KriegSonnleitnerRandomPointsForApproximatingSobolevFunctions} shows
  that there exists a constant $C_1 = C_1(d,\sigma) > 0$ such that for every $m \in \N$,
  there exist points $x_1,\dots,x_m \in \Omega$ and an operator $\varphi : \R^m \to L^\infty(\Omega)$
  such that for every $f \in C^s(\Omega)$ with $\| f \|_{C^s(\Omega)} \leq 1$, we have
  $\| f - \varphi(f(x_1),\dots,f(x_m)) \|_{L^\infty(\Omega)} \leq C_1 \cdot m^{-\sigma/d}$.
  Now, \Cref{lem:BarronIntoHoelderEmbedding} yields a constant $C_2 = C_2(d,\sigma) > 0$
  such that every $f \in B^{\sigma}(\Omega)$ with $\| f \|_{B^\sigma(\Omega)} \leq 1$
  satisfies $\| f \|_{C^s(\Omega)} \leq C_2$.
  Now, define
  \[
    T : \quad
    \R^m \to L^\infty(\Omega), \quad
    y \mapsto C_2 \cdot \varphi(y / C_2)
    .
  \]
  Then, if $\| f \|_{B^\sigma(\Omega)} \leq 1$, we know that $g := f / C_2$
  satisfies $\| g \|_{C^s (\Omega)} \leq 1$ and thus
  \[
    \| f - T(f(x_1),\dots,f(x_m)) \|_{L^\infty}
    = C_2 \, \| g - \varphi(g(x_1),\dots,g(x_m)) \|_{L^\infty}
    \leq C_2 C_1 \, m^{-\sigma / d}
    .
  \]
  This easily implies the claim.
\end{proof}

\subsection{Upper bounds for \texorpdfstring{$L^p$}{Lᷮ} sampling numbers}
\label{sub:LpUpperBounds}

To get upper bounds for the $L^p$ sampling numbers,
we will use the following general technical result.

\begin{lemma}\label{lem:SpecialInterpolationLemma}
  Let $(\Omega,\mu)$ be a measure space, let $\eps, \delta \geq 0$,
  and let $f,g,h : \Omega \to \R$ be measurable with
  \[
    \| f - g \|_{L^2(\mu)} \leq \eps
    \qquad \text{and} \qquad
    \| f - h \|_{L^\infty(\mu)} \leq \delta
    .
  \]
  Then the function
  \[
    F = F_{g,h,\delta} : \quad
    \Omega \to \R, \quad
    F(x) := \begin{cases}
              h(x) + \delta, & \text{if } g(x) \geq h(x) + \delta, \\
              h(x) - \delta, & \text{if } g(x) \leq h(x) - \delta, \\
              g(x), & \text{otherwise}
            \end{cases}
  \]
  satisfies for any $p \in [2,\infty]$ the estimate
  \[
    \| f - F \|_{L^p(\mu)}
    \leq 2 \, \eps^{\frac{2}{p}} \, \delta^{1 - \frac{2}{p}}
    .
  \]
\end{lemma}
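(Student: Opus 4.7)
The proof plan is to establish two pointwise inequalities for $|f-F|$ and then do an elementary $L^2$-$L^\infty$ interpolation on the $p$-th power.

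First, I would observe that the assumption $\|f-h\|_{L^\infty(\mu)} \leq \delta$ means $f(x) \in [h(x)-\delta, h(x)+\delta]$ for $\mu$-a.e.\ $x \in \Omega$. By construction, $F(x)$ is precisely the orthogonal projection of $g(x)$ onto the closed interval $[h(x)-\delta, h(x)+\delta]$ (that is, $F$ clips $g$ to this interval around $h$). Since projection onto a closed convex set in $\mathbb{R}$ is $1$-Lipschitz and fixes points in the set, and since $f(x)$ lies in the set, I get the pointwise bound
\[
  |f(x) - F(x)| \leq |f(x) - g(x)| \qquad \text{for $\mu$-a.e.\ } x \in \Omega.
\]
Integrating and using the $L^2$ hypothesis then yields $\|f - F\|_{L^2(\mu)} \leq \|f - g\|_{L^2(\mu)} \leq \eps$.

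Second, I would observe directly from the definition that $|F(x) - h(x)| \leq \delta$ for every $x$, so by the triangle inequality
\[
  |f(x) - F(x)| \leq |f(x) - h(x)| + |h(x) - F(x)| \leq 2\delta
\]
for $\mu$-a.e.\ $x$. Hence $\|f - F\|_{L^\infty(\mu)} \leq 2\delta$.

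The final step is the standard log-convexity of $L^p$ norms: for $p \in [2, \infty)$,
\[
  \int_\Omega |f-F|^p \, d\mu
  = \int_\Omega |f-F|^2 \cdot |f-F|^{p-2} \, d\mu
  \leq \|f-F\|_{L^\infty(\mu)}^{p-2} \cdot \|f-F\|_{L^2(\mu)}^2 ,
\]
so taking the $p$-th root gives
\[
  \|f-F\|_{L^p(\mu)} \leq (2\delta)^{1 - 2/p} \cdot \eps^{2/p} \leq 2 \cdot \delta^{1 - 2/p} \, \eps^{2/p} ,
\]
using $2^{1-2/p} \leq 2$. The case $p = \infty$ is immediate from $\|f-F\|_{L^\infty(\mu)} \leq 2\delta$. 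I do not anticipate any real obstacle; the only subtle point is verifying that the projection interpretation gives the non-expansiveness bound $|f-F| \leq |f-g|$, which can be checked by splitting into the three cases defining $F$.
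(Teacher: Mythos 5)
Your proof is correct and follows essentially the same route as the paper's: establish the two pointwise bounds $|f-F| \leq |f-g|$ and $|f-F| \leq 2\delta$, then interpolate between $L^2$ and $L^\infty$. The only cosmetic difference is that you derive the first bound via the non-expansiveness of projection onto a closed interval (a clean way to package the same fact), whereas the paper verifies it by an explicit three-case check.
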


\begin{proof}
  By changing $h$ (and thus possibly also $F$) on a null-set,
  we can assume that $|f(x) - h(x)| \leq \delta$ for all $x \in \Omega$.
  Now, we claim for any $x \in \Omega$ that $|f(x) - F(x)| \leq |f(x) - g(x)|$
  and furthermore $|f(x) - F(x)| \leq 2 \delta$.
  To prove this, we distinguish three cases:

  \noindent
  \textbf{Case 1 ($g(x) \geq h(x) + \delta$):}
  In this case, we see on the one hand that
  \[
    |f(x) - F(x)|
    = |f(x) - (h(x) + \delta)|
    \leq |f(x) - h(x)| + |-\delta|
    \leq 2 \delta ,
  \]
  which proves the second claimed property.
  On the other hand, we see because of $|f(x) - h(x)| \leq \delta$ that
  $g(x) \geq h(x) + \delta \geq f(x)$ and thus
  \begin{align*}
    |f(x) - g(x)|
    & = g(x) - f(x) \\
    & \geq h(x) + \delta - f(x)
      = |h(x) + \delta - f(x)| \\
    & = |F(x) - f(x)|
      = |f(x) - F(x)| ,
  \end{align*}
  which proves the first claimed property.

  \medskip{}

  \noindent
  \textbf{Case 2 ($g(x) \leq h(x) - \delta$):}
  In this case, we see on the one hand that
  \[
    |f(x) - F(x)|
    = |f(x) - (h(x) - \delta)|
    \leq |f(x) - h(x)| + |\delta|
    \leq 2 \delta ,
  \]
  which proves the second claimed property.
  On the other hand, we see because of $|f(x) - h(x)| \leq \delta$ that
  $g(x) \leq h(x) - \delta \leq f(x)$ and thus
  \begin{align*}
    |f(x) - g(x)|
    & = f(x) - g(x) \\
    & \geq f(x) - (h(x) - \delta) \\
    & =    |f(x) - (h(x) - \delta)|
      =    |f(x) - F(x)| ,
  \end{align*}
  which proves the first claimed property.

  \medskip{}

  \noindent
  \textbf{Case 3 (the remaining case):} In this case, $|g(x) - h(x)| < \delta$,
  and this easily implies that $|f(x) - F(x)| = |f(x) - g(x)| < \delta \leq 2 \delta$.
  Hence, both claimed estimates are valid in this case.

  \medskip{}

  Overall, we thus see $\| f - F \|_{L^2(\mu)} \leq \| f - g \|_{L^2(\mu)} \leq \eps$
  and $\| f - F \|_{L^\infty (\mu)} \leq 2 \delta$.
  Now, let $p \in [2, \infty]$ be arbitrary and set $\theta = 2/p$, noting that then
  $\frac{1}{p} = \frac{\theta}{2} + \frac{1-\theta}{\infty}$.
  Thus, combining the previous estimates with a standard interpolation estimate for $L^p$ norms
  (see for instance \cite[Proposition~6.10]{FollandRA}), we see that
  \[
    \| f - F \|_{L^p(\mu)}
    \leq \| f - F \|_{L^2(\mu)}^{\theta} \cdot \| f - F \|_{L^\infty(\mu)}^{1 - \theta}
    \leq \eps^{\theta} \cdot (2 \delta)^{1-\theta}
    \leq 2 \cdot \eps^{\theta} \cdot \delta^{1-\theta}
    ,
  \]
  which easily implies the claim of the lemma.
\end{proof}

\begin{corollary}\label{cor:SamplingNumberInterpolation}
  Let $\Omega \subset \R^d$ be compact and let $V \hookrightarrow C(\Omega)$.
  Then we have for all $p \in [2,\infty]$ and $m \in \N$ that
  \[
    s_{2m} (V;L^p)
    \leq 2 \cdot [s_m(V; L^2)]^{\frac{2}{p}} \cdot [s_m(V;L^\infty)]^{1 - \frac{2}{p}}
    .
  \]
\end{corollary}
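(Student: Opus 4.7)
The plan is to combine a reconstruction that is (almost) optimal for the $L^2$ sampling number with one that is (almost) optimal for the $L^\infty$ sampling number, and then apply the pointwise clipping from \Cref{lem:SpecialInterpolationLemma} to interpolate between them in $L^p$.

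First I would fix $\eta > 0$ and unpack the infima in \Cref{def:SamplingNumbers}: choose sampling points $x_1,\dots,x_m \in \Omega$ together with a reconstruction $T_2 : \R^m \to L^2(\Omega)$ satisfying
\[
  \|f - T_2(f(x_1),\dots,f(x_m))\|_{L^2} \leq s_m(V;L^2) + \eta =: \eps
\]
for every $f$ in the unit ball of $V$, and analogously points $x_{m+1},\dots,x_{2m} \in \Omega$ and a reconstruction $T_\infty : \R^m \to L^\infty(\Omega)$ realizing the error $\delta := s_m(V;L^\infty) + \eta$ in $L^\infty$. I would then take the concatenated list $x_1,\dots,x_{2m}$ as the sampling points and define the combined reconstruction $T : \R^{2m} \to L^p(\Omega)$ by $T(y) := F_{g,h,\delta}$, where $g$ is a measurable representative of $T_2(y_1,\dots,y_m)$, $h$ is a bounded measurable representative of $T_\infty(y_{m+1},\dots,y_{2m})$, and $F$ is the clipping function from \Cref{lem:SpecialInterpolationLemma}.

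Given $f \in V$ with $\|f\|_V \leq 1$, both hypotheses of \Cref{lem:SpecialInterpolationLemma} are then met with this $\eps$ and $\delta$, which yields
\[
  \|f - T(f(x_1),\dots,f(x_{2m}))\|_{L^p} \leq 2 \, \eps^{2/p} \, \delta^{1 - 2/p}.
\]
Taking the supremum over the unit ball of $V$ and the infimum over admissible choices of points and reconstructions gives $s_{2m}(V;L^p) \leq 2 \, (s_m(V;L^2)+\eta)^{2/p} \, (s_m(V;L^\infty)+\eta)^{1-2/p}$, and letting $\eta \downarrow 0$ completes the proof.

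There is no substantive obstacle here; the only detail to watch is that \Cref{lem:SpecialInterpolationLemma} is stated for pointwise-defined measurable functions, so one has to pick explicit measurable (and, in the case of $h$, pointwise-bounded) representatives of the outputs of $T_2$ and $T_\infty$ before forming $F_{g,h,\delta}$. With those representatives in hand, the clipping $F_{g,h,\delta}$ is measurable by construction and all the $L^p$ norms above are well-defined.
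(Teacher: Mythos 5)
Your proposal is correct and follows exactly the paper's own argument: combine an almost-optimal $L^2$ reconstruction on the first $m$ points with an almost-optimal $L^\infty$ reconstruction on the next $m$ points, apply the clipping construction of \Cref{lem:SpecialInterpolationLemma} with the $L^\infty$ error level as the clipping parameter, and let the slack tend to zero. The added remark about picking measurable representatives of $T_2$ and $T_\infty$ before clipping is a harmless technical gloss that the paper leaves implicit.
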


\begin{proof}
  Let $m \in \N$ and $\gamma > 0$ be arbitrary.
  Set $\eps := s_m (V; L^2) + \gamma$ and $\delta := s_m(V;L^\infty) + \gamma$.
  By definition, there exist sampling points $x_1,\dots,x_m \in \Omega$ and a reconstruction map
  $T_2 : \R^m \to L^2(\Omega)$ such that for all $f \in V$ with $\| f \|_V \leq 1$, we have
  $\| f - T_2(f(x_1),\dots,f(x_m)) \|_{L^2} \leq \eps$.
  Likewise, there exist sampling points $x_{m+1},\dots,x_{2m} \in \Omega$ and a reconstruction map
  $T_\infty : \R^m \to L^\infty(\Omega)$ satisfying
  $\| f - T_\infty(f(x_{m+1}), \dots,f(x_{2m})) \|_{L^\infty(\Omega)} \leq \delta$
  for all $f \!\in\! V$ with $\| f \|_{V} \leq 1$.

  Now, let $p \in [2,\infty]$ be arbitrary and set
  \[
    T : \quad
    \R^{2 m} \to L^p(\Omega), \quad
    (y_1,\dots,y_{2 m}) \mapsto F_{T_2(y_1,\dots,y_m), T_\infty(y_{m+1},\dots,y_{2m}), \delta}
    ,
  \]
  with $F_{g,h,\delta}$ as defined in \Cref{lem:SpecialInterpolationLemma}.
  Now, let $f \in V$ with $\| f \|_V \leq 1$ be arbitrary and set
  $g := T_2(f(x_1),\dots,f(x_m))$ and $h := T_\infty(f(x_{m+1}),\dots,f(x_{2m}))$.
  As seen above, we then have $\| f - g \|_{L^2(\Omega)} \leq \eps$
  and $\| f - h \|_{L^\infty(\Omega)} \leq \delta$.
  Therefore, \Cref{lem:SpecialInterpolationLemma} shows that
  \begin{align*}
    \| f - T(f(x_1),\dots,f(x_{2m})) \|_{L^p(\Omega)}
    & = \| f - F_{g,h,\delta} \|_{L^p(\Omega)} \\
    & \leq 2 \, \eps^{\frac{2}{p}} \delta^{1 - \frac{2}{p}} \\
    & =    2
           \cdot [s_m(V;L^2) + \gamma]^{\frac{2}{p}}
           \cdot [s_m (V;L^\infty) + \gamma]^{1 - \frac{2}{p}}
    .
  \end{align*}
  Thus,
  \(
    s_{2m}(V; L^p)
    \leq 2
         \cdot [s_m(V;L^2) + \gamma]^{\frac{2}{p}}
         \cdot [s_m (V;L^\infty) + \gamma]^{1 - \frac{2}{p}}
    .
  \)
  Since this holds for arbitrary $\gamma > 0$, we are done.
\end{proof}

As a consequence, we now obtain upper bounds for the $L^p$-sampling numbers of the Barron space
$B^{\sigma}(\Omega)$ for general $p \in [1,\infty]$.

\begin{theorem}\label{thm:GeneralUpperBound}
  Let $d \in \N$ and $\Omega := [-\frac{1}{2}, \frac{1}{2}]^d$.
  Given $\sigma \in (0,\infty)$, there exists a constant $C = C(d,\sigma) \geq 1$ such that
  for every $p \in [1,\infty]$ and $m \in \N$, we have
  \[
    s_m \bigl(B^\sigma (\Omega); L^p(\Omega)\bigr)
    \leq \begin{cases}
           C \cdot \big( [\ln(e + m)]^4 \big/ m \big)^{\frac{1}{2} + \frac{\sigma}{d}},
           & \text{if } p \in [1,2], \\[0.2cm]
           C \cdot [\ln(e + m)]^{(4 + 8 \frac{\sigma}{d}) / p}
             \cdot m^{-(\frac{1}{p} + \frac{\sigma}{d})},
           & \text{if } p \in [2,\infty]. \\
         \end{cases}
  \]
\end{theorem}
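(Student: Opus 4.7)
The plan is to combine the three ingredients already established in this section: the $L^2$ bound from \Cref{cor:L2SamplingNumberUpperBound}, the $L^\infty$ bound from \Cref{prop:LInftyUpperBound}, and the interpolation statement \Cref{cor:SamplingNumberInterpolation}. Since the computations are essentially arithmetic on exponents, no real obstacle arises; the only thing to check carefully is the index arithmetic and the mild bookkeeping needed to pass from $s_{2m}$ to $s_m$.

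\textbf{Case $p \in [1,2]$.} I would simply observe that $\Omega$ has Lebesgue measure~$1$, so that the identity furnishes a norm-one embedding $L^2(\Omega) \hookrightarrow L^p(\Omega)$. Consequently, for any sampling points $x_1,\dots,x_m$ and reconstruction map $T$ with values in $L^2(\Omega)$, the inequality $\|f - T(f(x_1),\dots,f(x_m))\|_{L^p} \leq \|f - T(f(x_1),\dots,f(x_m))\|_{L^2}$ holds, whence $s_m(B^\sigma(\Omega); L^p(\Omega)) \leq s_m(B^\sigma(\Omega); L^2(\Omega))$. Applying \Cref{cor:L2SamplingNumberUpperBound} closes this case.

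\textbf{Case $p \in [2,\infty]$.} Here I would invoke \Cref{cor:SamplingNumberInterpolation} applied to $V = B^\sigma(\Omega)$, noting that $B^\sigma(\Omega) \hookrightarrow C(\Omega)$ since every $f \in B^\sigma(\Omega)$ is a Fourier transform of an $L^1$ function (hence continuous) and satisfies $\|f\|_\infty \leq \|f\|_{B^\sigma(\Omega)}$ by \Cref{lem:BarronEmbedsIntoLInfty}. The interpolation bound yields
\[
s_{2m}\bigl(B^\sigma; L^p\bigr)
  \leq 2 \cdot \bigl[s_m(B^\sigma; L^2)\bigr]^{2/p}
       \cdot \bigl[s_m(B^\sigma; L^\infty)\bigr]^{1-2/p} .
\]
Plugging in the two known bounds, the exponent of $m$ evaluates to
\[
\tfrac{2}{p}\bigl(\tfrac{1}{2} + \tfrac{\sigma}{d}\bigr)
  + \bigl(1 - \tfrac{2}{p}\bigr) \cdot \tfrac{\sigma}{d}
= \tfrac{1}{p} + \tfrac{\sigma}{d} ,
\]
while the exponent of the logarithmic factor becomes $\tfrac{2}{p} \cdot 4 \cdot (\tfrac{1}{2} + \tfrac{\sigma}{d}) = (4 + 8\sigma/d)/p$. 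This gives the desired estimate, but with $s_{2m}$ on the left.

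\textbf{Transition from $s_{2m}$ to $s_m$.} To conclude, I would use that sampling numbers are monotone nonincreasing in $m$, so $s_m(V;L^p) \leq s_{2\lfloor m/2 \rfloor}(V;L^p)$, and apply the previous step with $m$ replaced by $\lfloor m/2 \rfloor$. For $m = 1$, the estimate is trivial after enlarging the constant, using $\|f\|_{L^p(\Omega)} \leq \|f\|_{L^\infty(\Omega)} \leq \|f\|_{B^\sigma(\Omega)}$ from \Cref{lem:BarronEmbedsIntoLInfty}. For $m \geq 2$, replacing $m$ with $\lfloor m/2 \rfloor$ in the bound only alters the exponents' prefactors by a multiplicative constant depending on $d$ and $\sigma$ (since $\ln(e + \lfloor m/2 \rfloor) \asymp \ln(e + m)$ and $\lfloor m/2 \rfloor^{-\alpha} \lesssim m^{-\alpha}$), which is absorbed into $C = C(d,\sigma)$. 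This completes the proof.
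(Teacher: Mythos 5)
Your proof is correct and follows essentially the same route as the paper's: the embedding $L^2(\Omega) \hookrightarrow L^p(\Omega)$ for $p \in [1,2]$, the interpolation inequality of \Cref{cor:SamplingNumberInterpolation} fed with the $L^2$ bound from \Cref{cor:L2SamplingNumberUpperBound} and the $L^\infty$ bound from \Cref{prop:LInftyUpperBound} for $p \in [2,\infty]$, and the passage from $s_{2m}$ to $s_m$ via monotonicity and $\lfloor m/2 \rfloor \geq m/4$ for $m \geq 2$. The exponent arithmetic and the treatment of $m = 1$ also match the paper.
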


\begin{proof}
  Let $\lambda_0 := \frac{1}{2} + \frac{\sigma}{d}$.
  \Cref{cor:L2SamplingNumberUpperBound} yields $C_1 = C_1(d,\sigma) \geq 1$ such that
  \[
    s_m \bigl(B^\sigma(\Omega); L^2(\Omega)\bigr)
    \leq C_1 \cdot \Big( \ln^4(e + m) \Big/ m \Big)^{\lambda_0}
    \qquad \forall \, m \in \N .
  \]
  Similarly, \Cref{prop:LInftyUpperBound} yields $C_2 = C_2(d,\sigma) \geq 1$ such that
  \[
    s_m \big( B^\sigma(\Omega); L^\infty(\Omega) \big)
    \leq C_2 \cdot m^{-\sigma / d}
    .
  \]
  Define $C_3 := \max \{ C_1, C_2 \}$
  and $C := 2 \cdot 4^{1 + \frac{\sigma}{d}} \cdot C_3$.
  We now distinguish two cases regarding $p \in [1,\infty]$.

  \smallskip{}

  \noindent
  \textbf{Case~1 ($p \in [1,2]$):} For $p \in [1,2]$, Hölder's inequality shows because of
  $\lebesgue(\Omega) = 1$ that $L^2(\Omega) \hookrightarrow L^p(\Omega)$ with
  $\| \bullet \|_{L^p(\Omega)} \leq \| \bullet \|_{L^2(\Omega)}$.
  This easily implies
  \[
    s_m \big( B^\sigma(\Omega); L^p(\Omega) \big)
    \leq s_m \big( B^\sigma(\Omega); L^2(\Omega) \big)
    \leq C_3 \cdot \Big( \ln^4(e + m) \Big/ m \Big)^{\lambda_0}
    .
  \]
  Since $C_3 \leq C$, this implies the claim in this case.

  \medskip{}

  \noindent
  \textbf{Case~2 ($p \in [2,\infty]$):}
  We first consider the case $m \!\in\! \N_{\geq 2}$ and set $n := \lfloor m / 2 \rfloor \!\in\! \N$,
  noting that $\frac{m}{2} \leq 1 + n \leq 2 n$ and thus $m \geq n \geq m/4$.
  Then, \Cref{cor:SamplingNumberInterpolation} (applied with $n$ instead of $m$) shows
  \begin{align*}
    s_m \big( B^\sigma(\Omega); L^p(\Omega) \big)
    & \leq s_{2 n} \big( B^\sigma(\Omega); L^p(\Omega) \big) \\
    & \leq 2
           \cdot \bigl[ s_{n} \big( B^\sigma(\Omega); L^2(\Omega) \big) \bigr]^{\frac{2}{p}}
           \cdot \bigl[ s_{n} \big( B^\sigma(\Omega); L^\infty(\Omega) \big) \bigr]^{1 - \frac{2}{p}} \\
    & \leq 2 C_3
           \cdot \Big( \ln^4 (e + n) \Big/ n  \Big)^{2 \lambda_0 / p}
           \cdot n^{- \frac{\sigma}{d} (1 - \frac{2}{p})} \\
    & =    2 C_3 \cdot \ln^{8\lambda_0 / p} (e + n) \cdot n^{-(\frac{1}{p} + \frac{\sigma}{d})} \\
    & \leq 2 \cdot 4^{\frac{1}{p} + \frac{\sigma}{d}} \cdot C_3
           \cdot \ln^{8\lambda_0 / p} (e + m) \cdot m^{-(\frac{1}{p} + \frac{\sigma}{d})} \\
    & \leq C \cdot \ln^{8\lambda_0 / p} (e + m) \cdot m^{-(\frac{1}{p} + \frac{\sigma}{d})} 
    .
  \end{align*}
  This proves the claim for $p \in [2,\infty]$ and $m \in \N_{\geq 2}$.

  Finally, if $m = 1$, simply recall from \Cref{lem:BarronEmbedsIntoLInfty} that
  $\| f \|_{L^p} \leq \| f \|_{L^\infty} \leq 1$ for all $f \in B^\sigma(\Omega)$
  with $\| f \|_{B^\sigma} \leq 1$; by taking $T \equiv 0$ as the reconstruction map,
  this easily implies
  \[
    s_m \big( B^\sigma(\Omega); L^p(\Omega) \big)
    \leq 1
    \leq \ln^{8 \lambda_0 / p} (e + m) \cdot m^{-(\frac{1}{p} + \frac{\sigma}{d})}
    \leq C \cdot \ln^{8 \lambda_0 / p} (e + m) \cdot m^{-(\frac{1}{p} + \frac{\sigma}{d})}
    .
  \]
  Thus, the claim holds for all $m \in \N$.
\end{proof}

\section{Lower bounds}%
\label{sec:LowerBounds}

In the previous section, we showed that a certain \emph{deterministic} choice of sampling points
and reconstruction algorithm yields a certain decay of the recovery error as the number of sampling
points tends to $\infty$.
In this section, we show that this rate is optimal (possibly up to log factors),
even in the class of \emph{randomized} algorithms.
To formally prove this, let us first clarify the notion of (deterministic and randomized)
algorithms based on point samples.

\begin{definition}\label{def:AlgorithmsBasedOnPointSamples}
  Let $\emptyset \neq \Omega \subset \R^d$ be arbitrary and $p \in [1,\infty]$.
  A \emph{deterministic ($L^p$-valued) algorithm based on $m$ point samples} is any map of the form
  \[
    A = T \circ \Phi ,
  \]
  where $\Phi : C(\Omega) \to \R^m, f \mapsto (f(x_1),\dots,f(x_m))$ for certain $x_1,\dots,x_m \in \Omega$
  and where $T : \R^m \to L^p(\Omega)$ is arbitrary.
  We then define the \emph{worst-case error} of the algorithm $A$ on $B^\sigma(\Omega)$ as
  \[
    E_p (A; B^\sigma(\Omega))
    := \sup_{\| f \|_{B^\sigma(\Omega)} \leq 1} \| f - A(f) \|_{L^p}
    ,
  \]
  and we write $\Alg_m^{\mathrm{det}} (L^p)$ for the set of all these algorithms.

  A \emph{randomized ($L^p$-valued) algorithm using at most $m$ point samples in expectation} is a family
  $\mathbf{A} = (A_\omega)_{\omega \in \Omega}$ indexed by a probability space $(\mathbf{X}, \PP)$
  such that the following properties hold:
  \begin{enumerate}
    \item There exists a measurable map $\mathbf{m} : \mathbf{X} \to \N$ satisfying
          \begin{itemize}
            \item $A_\omega \in \Alg_{\mathbf{m}(\omega)}^{\mathrm{det}}(L^p)$ for all $\omega \in \mathbf{X}$, and
            \item $\EE [\mathbf{m}(\omega)] \leq m$.
          \end{itemize}

    \item The map $\omega \mapsto \| f - A_\omega (f) \|_{L^p}$ is measurable
          for every $f \in B^\sigma(\Omega)$ with $\| f \|_{B^\sigma(\Omega)} \leq 1$.
  \end{enumerate}
  The \emph{worst-case expected error} of $\mathbf{A}$ on $B^\sigma(\Omega)$ is defined as
  \[
    E_p(\mathbf{A}; B^\sigma(\Omega))
    := \sup_{\| f \|_{B^\sigma(\Omega)} \leq 1}
         \EE \| f - A_\omega(f) \|_{L^p}
    ,
  \]
  and we write $\Alg_m^{\mathrm{MC}}(L^p)$ for the class of all such algorithms;
  here, the ``MC'' stand for ``Monte Carlo''.
\end{definition}

\begin{remark*}
  \begin{enumerate}
    \item We note that $\Alg_m^{\mathrm{det}}(L^p) \subset \mathrm{Alg}_m^{\mathrm{MC}}(L^p)$
          (up to abuse of notation), since one can interpret every deterministic algorithm
          as a randomized algorithm over the trivial probability space $\mathbf{X} = \{ 0 \}$.

    \item The usage of the word ``algorithm'' is \emph{not} to be understood as implying some
          form of (Turing) computability; as stated above, the map $T$ in $A = T \circ \Phi$
          can be \emph{arbitrary}, even non-computable.

          We remark, however, that the algorithm that we construct in our upper bounds
          (see e.g.\ \Cref{thm:UpperBoundL2}) \emph{is} in fact computable.
  \end{enumerate}
\end{remark*}

Our goal in this section is to prove the following lower bound:

\begin{theorem}\label{thm:LowerBound}
  For any $d \in \N$, $\sigma \in (0,\infty)$, and $p \in [1,\infty]$,
  there exists a constant $C = C(d,\sigma,p) > 0$
  such that for $\Omega = [-\frac{1}{2}, \frac{1}{2}]^d$ and any randomized $L^p$-valued
  algorithm $\mathbf{A}$ using at most $m \in \N$ point samples in expectation, we have
  \[
    E_p (\mathbf{A}; B^\sigma(\Omega))
    \geq \begin{cases}
           C \cdot m^{-(1/2 + \sigma/d)}, & \text{if } p \leq 2, \\[0.2cm]
           C \cdot m^{-(1/p + \sigma/d)}, & \text{if } p >    2.
         \end{cases}
  \]
\end{theorem}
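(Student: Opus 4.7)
The plan is to combine a bump-function construction in $B^\sigma(\Omega)$ with a $\pm$-symmetrization (for $p > 2$) and a random-sign argument (for $p \leq 2$) to produce the deterministic lower bound, and then to upgrade to randomized algorithms via Yao's minimax principle. Fix once and for all a $\phi \in C_c^\infty(\R^d)$ with $\supp \phi \subset (-\tfrac14,\tfrac14)^d$ and $\phi(0) > 0$; since $\widehat\phi \in \Schwartz(\R^d)$, one has $\| \phi \|_{B^\sigma} \leq C_0$, and the same Fourier change of variables used in \Cref{lem:TranslationDilation} shows that the dilated/translated bump $\psi_{x_0,\delta}(x) := \phi((x-x_0)/\delta)$ satisfies $\| \psi_{x_0,\delta} \|_{B^\sigma} \leq C_0\,\delta^{-\sigma}$ for every $\delta \in (0,1]$, while $\| \psi_{x_0,\delta} \|_{L^p} \asymp \delta^{d/p}$ and $\supp \psi_{x_0,\delta}$ is a cube of side $\delta/2$.

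For the deterministic bound with $p > 2$, given any $m$ sample points I would choose $\delta \asymp m^{-1/d}$ so that $\Omega$ decomposes into at least $2m$ disjoint cubes of side $\delta/2$; at most $m$ of these meet the sample set, so I may place a single normalized bump $h = c\,\delta^\sigma \psi_{x_0,\delta}$ with $\| h \|_{B^\sigma} \leq 1$ in a sample-free cube. Then $\pm h$ both lie in the Barron unit ball and produce identical (zero) samples, yielding $s_m \gtrsim \| h \|_{L^p} \asymp m^{-(\sigma/d + 1/p)}$. For $p \leq 2$ a single bump is too weak, so I instead take $M \asymp m$ disjoint sample-free cubes and consider $f_\epsilon := \sum_{k=1}^{M} \epsilon_k\, \psi_{x_k,\delta}$ for $\epsilon \in \{\pm 1\}^M$. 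The Fourier representation $F_\epsilon(\xi) = \delta^d \widehat\phi(\delta\xi) \sum_k \epsilon_k\, e^{-2\pi i \langle x_k,\xi \rangle}$ combined with Cauchy--Schwarz in $\epsilon$ gives $\EE_\epsilon \| f_\epsilon \|_{B^\sigma} \lesssim \sqrt{M}\,\delta^{-\sigma}$, so Markov produces some $\epsilon^\ast$ with $\| f_{\epsilon^\ast} \|_{B^\sigma} \lesssim \sqrt{M}\,\delta^{-\sigma}$; rescaling to unit Barron norm and exploiting the disjointness of supports gives an $h$ that still vanishes at every sample and satisfies $\| h \|_{L^p} \asymp M^{1/p - 1/2}\,\delta^{\sigma + d/p} \asymp m^{-(1/2 + \sigma/d)}$.

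To handle randomized algorithms I would invoke Yao's minimax principle: since $\EE \mathbf{m}(\omega) \leq m$ implies $\PP(\mathbf{m}(\omega) \leq 2m) \geq 1/2$ by Markov, it suffices to construct a prior $\mu$ on the Barron unit ball for which every deterministic $2m$-sample algorithm has $\EE_\mu \| f - A(f) \|_{L^p}$ at least the claimed bound; then the chain $\sup_f \EE_\omega \| f - A_\omega(f) \|_{L^p} \geq \EE_\mu \EE_\omega \geq \EE_\omega \EE_\mu \gtrsim (\text{bound})/2$ closes the argument. For $p > 2$ the prior places the bump in a uniformly random cube (chosen among $\geq 4m$ disjoint cubes) with a uniform $\pm$ sign; with probability $\geq 1/2$ the bump avoids every sample, and the sign-flip symmetry $\| h - A(0) \|_{L^p} + \| -h - A(0) \|_{L^p} \geq 2 \| h \|_{L^p}$ forces the required average-case bound. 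For $p \leq 2$ the prior is a uniform $\epsilon \in \{\pm 1\}^M$ on the multi-bump construction (with amplitude rescaled so that $\| f_\epsilon \|_{B^\sigma} \leq 1$ holds on a Markov-typical set), and the per-bump symmetry --- for each $k$ whose bump is untouched by samples, $A(f_\epsilon)$ is independent of $\epsilon_k$, and the elementary inequality $\| y + \psi_k \|_{L^p} + \| y - \psi_k \|_{L^p} \geq 2 \| \psi_k \|_{L^p}$ yields $\EE_{\epsilon_k} \| f_\epsilon - A(f_\epsilon) \|_{L^p} \geq \| \psi_k \|_{L^p}$ conditionally --- together with the disjointness of supports delivers the average-case bound of order $m^{-(1/2 + \sigma/d)}$.

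The main technical obstacle is this last step: the random-sign improvement of $\sqrt{M}$ in the Barron norm is only typical, not uniform over $\epsilon$, so one must either truncate the prior to the Markov-typical event $\mathcal{E}^\ast = \{\| f_\epsilon \|_{B^\sigma} \leq K\}$ (which, since it is symmetric under $\epsilon \mapsto -\epsilon$, preserves the global sign-flip symmetry but breaks per-coordinate symmetry) or, alternatively, keep the amplitude deterministic and absorb the loss from the atypical signs via a tail estimate for $\| f_\epsilon \|_{B^\sigma}$. Either variant requires a careful bookkeeping of how the conditioning interacts with the per-bump independence that drives the lower bound, which is the delicate point of the proof.
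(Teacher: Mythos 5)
Your overall strategy---bump functions placed in sample-free cubes, sign-flip symmetry, and Markov on the sample count to pass from deterministic to randomized lower bounds---is the same as the paper's, and your handling of the case $p > 2$ (single bump, two signs) is sound. But the $p \leq 2$ case, which you yourself flag as the delicate point, contains a genuine gap that the paper avoids by a more structured construction.

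You bound $\EE_\epsilon \| f_\epsilon \|_{B^\sigma}$ by Cauchy--Schwarz \emph{in $\epsilon$}, which yields only a \emph{typical} bound of order $\sqrt{M}\,\delta^{-\sigma}$, and then you must wrestle with how the conditioning on the typical event interacts with the per-bump sign-flip independence. This problem disappears in the paper's proof because the bound on the Barron norm is \emph{uniform over all sign patterns}, not just typical. The mechanism (see \Cref{lem:BumpSumNormEstimate}) is to place the $N^d$ bumps on the regular lattice $\{n/N - (\tfrac12,\dots,\tfrac12) : n \in \{0,\dots,N-1\}^d\}$; then the trigonometric sum $g_{\Lambda,\theta}(\xi) = \sum_{n \in \Lambda} \theta_n e^{-2\pi i \langle n,\xi\rangle}$ is $\Z^d$-periodic, and the chain $\|g_{\Lambda,\theta}\|_{L^1([0,1]^d)} \leq \|g_{\Lambda,\theta}\|_{L^2([0,1]^d)} = |\Lambda|^{1/2}$ (Hölder in $\xi$ followed by Parseval) holds for \emph{every} fixed $\theta \in \{\pm 1\}^{\Lambda_N^\ast}$, with no averaging in $\theta$. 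Your construction places bumps only in the $M$ sample-free cubes, which are not lattice-structured, so you cannot invoke Parseval pointwise in $\theta$. There is a second structural difference that also helps the paper: for $p \leq 2$ the sum is taken over \emph{all} $N^d$ lattice cubes, not only the sample-free ones, and the sign-flip involution $\iota$ acts only on the coordinates in the sample-free index set $I$. Because the Barron norm bound is uniform, every $\gamma_{\Lambda,\theta}$ lies in the unit ball, the symmetrization $\theta \mapsto \iota\theta$ is norm-preserving on the unit ball, and the average over $\theta$ goes through without any truncation. To repair your proposal along its current lines you would need either (a) to redo the Barron-norm estimate pointwise in $\theta$ using a lattice placement and Parseval, recovering the paper's lemma, or (b) to carry out the conditioning-on-a-typical-event argument carefully, which is precisely the extra bookkeeping you identified; option (a) is the cleaner resolution and is what the paper does.
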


\subsection{Preparations}
\label{sub:Preparations}

In this subsection, we collect a few technical results which we will then use to
prove \Cref{thm:LowerBound} in the following subsection.

\begin{lemma}\label{lem:BumpSumNormEstimate}
  Let $d \in \N$ and $\sigma \in (0,\infty)$, set $\Omega := [-\frac{1}{2}, \frac{1}{2}]^d$, and fix
  $\varphi \in C_c^\infty(\R^d)$ with $\varphi \not\equiv 0$ and $\supp \varphi \subset (0,1)^d$.
  Then there exists a constant $C = C(\varphi,d,\sigma)$ with the following property:

  Let $N \in \N$, set $\Lambda_N^\ast := \{ 0,\dots,N-1 \}^d$, and define
  \[
    \psi_{N,\Lambda,\theta}
    := \sum_{n \in \Lambda}
         \theta_n \cdot \varphi\bigl(N \cdot (\bullet + (\tfrac{1}{2},\dots,\tfrac{1}{2}) - \tfrac{n}{N})\bigr)
  \]
  for $\Lambda \subset \Lambda_N^\ast$ and $\theta \in \{ \pm 1 \}^{\Lambda_N^\ast}$.
  Then we have
  \[
    \| \psi_{N,\Lambda,\theta} \|_{B^\sigma(\Omega)}
    \leq C \cdot N^\sigma \cdot |\Lambda|^{1/2}
    .
  \]
\end{lemma}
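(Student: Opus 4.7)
The plan is to exhibit a single Fourier representation $\psi_{N,\Lambda,\theta} = \int F(\xi)\, e^{2\pi i \langle \cdot, \xi\rangle}\, d\xi$ and to bound $\int (1+|\xi|)^\sigma |F(\xi)|\, d\xi$ directly. Since each summand is a translated/dilated copy of $\varphi$, the Fourier transform is explicit: with $c = (\tfrac12,\dots,\tfrac12)$ and $\varphi_{N,n}(x) := \varphi(N(x + c - n/N))$, a change of variables gives
\[
  \widehat{\varphi_{N,n}}(\xi)
  = N^{-d}\, e^{2\pi i \langle c - n/N,\, \xi\rangle}\, \widehat{\varphi}(\xi/N),
\]
so I can take
\(
  F(\xi) := N^{-d}\, \widehat{\varphi}(\xi/N) \sum_{n \in \Lambda} \theta_n\, e^{2\pi i \langle c - n/N,\, \xi\rangle}.
\)
Since $\psi_{N,\Lambda,\theta}$ is a compactly supported $C^\infty$ function, Fourier inversion guarantees that this $F$ indeed represents it on all of $\Omega$.

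Next I substitute $\xi = N\eta$ and use $(1+N|\eta|)^\sigma \le N^\sigma(1+|\eta|)^\sigma$ (valid for $N\ge 1$; the case $N=0$ and the finitely many small $N$ can be absorbed into the constant) to reduce the Barron-norm bound to controlling
\[
  N^\sigma \int_{\R^d} (1+|\eta|)^\sigma\, |\widehat{\varphi}(\eta)|\, \Big|\sum_{n\in\Lambda} \theta_n\, e^{-2\pi i \langle n,\eta\rangle}\Big|\, d\eta
\]
(after pulling out the unimodular factor $e^{2\pi i \langle Nc,\eta\rangle}$).

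The key step, which accounts for the $|\Lambda|^{1/2}$ (rather than $|\Lambda|$ that one would get from the triangle inequality), is to exploit the integrality of the frequencies $n\in\Z^d$ via Parseval. I partition $\R^d = \biguplus_{k\in\Z^d} Q_k$ with $Q_k := k + [-\tfrac12,\tfrac12]^d$, and on each $Q_k$ apply Cauchy--Schwarz:
\[
  \int_{Q_k} (1+|\eta|)^\sigma\, |\widehat{\varphi}(\eta)|\, \Big|\sum_{n\in\Lambda}\theta_n e^{-2\pi i\langle n,\eta\rangle}\Big|\, d\eta
  \le A_k^{1/2} \cdot B_k^{1/2},
\]
where $A_k := \int_{Q_k} (1+|\eta|)^{2\sigma}|\widehat{\varphi}(\eta)|^2\, d\eta$ and $B_k := \int_{Q_k}\bigl|\sum_{n\in\Lambda}\theta_n e^{-2\pi i\langle n,\eta\rangle}\bigr|^2\, d\eta$. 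Because $n\in\Z^d$, the trigonometric polynomial is $\Z^d$-periodic, so $B_k = \int_{[-1/2,1/2]^d}|\cdots|^2\,d\eta = |\Lambda|$ by Parseval on the torus. This yields the factor $|\Lambda|^{1/2}$.

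Finally, I sum over $k$: since $\widehat{\varphi} \in \Schwartz(\R^d)$, for any $M$ we have $|\widehat{\varphi}(\eta)| \lesssim_M (1+|\eta|)^{-M}$, hence $A_k^{1/2} \lesssim_M (1+|k|)^{\sigma - M}$, and choosing $M > \sigma + d$ makes $\sum_{k\in\Z^d} A_k^{1/2}$ a finite constant depending only on $\varphi, d, \sigma$. Combining everything gives $\|\psi_{N,\Lambda,\theta}\|_{B^\sigma(\Omega)} \le C\cdot N^\sigma\cdot |\Lambda|^{1/2}$, as claimed. The only conceptual subtlety is the Parseval step — recognizing that the integer-lattice structure of the shifts $n/N$ (rather than the full affine data) is what lets Cauchy--Schwarz beat the trivial triangle-inequality bound; the rest is bookkeeping.
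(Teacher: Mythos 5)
Your proof is correct, and it follows essentially the same route as the paper's: compute the Fourier transform of $\psi_{N,\Lambda,\theta}$ via scaling/translation identities, rescale to pull out the factor $N^\sigma$, exploit the $\Z^d$-periodicity of the exponential sum $g_{\Lambda,\theta}$ together with Parseval to get $\|g_{\Lambda,\theta}\|_{L^2}=|\Lambda|^{1/2}$, and use the Schwartz decay of $\widehat{\varphi}$ to sum over unit cubes. The only (cosmetic) difference is the order of operations: the paper first uses the Schwartz bound and the periodicity to collapse the integral to a single period and then applies Cauchy--Schwarz on $[0,1]^d$, while you apply Cauchy--Schwarz on each cube $Q_k$ and sum afterward -- a harmless rearrangement of the same ingredients. (The aside about ``the case $N=0$'' is vacuous since $N\in\N$ means $N\ge 1$, but this causes no issue.)
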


\begin{proof}
  For brevity, set $\varphi_N (x) := \varphi(N x)$ and note with the usual translation operator
  $T_x$ defined by $T_x f (y) = f(y - x)$ that
  \[
    \psi_{N,\Lambda,\theta}
    = \sum_{n \in \Lambda}
        \theta_n \, T_{\frac{n}{N} - (\frac{1}{2},\dots,\frac{1}{2})} \varphi_N
    .
  \]
  By standard properties of the Fourier transform, we see that
  $\psi_{N,\Lambda,\theta} \in C_c^\infty(\R^d)$ with Fourier transform
  \[
    \Fourier \psi_{N,\Lambda,\theta}
    = \sum_{n \in \Lambda}
        \theta_n \,
        e^{-2 \pi i \langle \frac{n}{N} - (\frac{1}{2},\dots,\frac{1}{2}), \xi \rangle} \,
        \widehat{\varphi_N} (\xi)
    = e^{\pi i \langle (1,\dots,1), \xi \rangle} \,
      N^{-d} \,
      \widehat{\varphi} (\tfrac{\xi}{N}) \,
      g_{\Lambda,\theta} (\tfrac{\xi}{N})
  \]
  for
  \(
    g_{\Lambda,\theta} (\xi)
    := \sum_{n \in \Lambda}
         \theta_n \, e^{- 2 \pi i \langle n, \xi \rangle}
    .
  \)
  Note that $g$ is $\Z^d$-periodic.

  Next, since $\widehat{\varphi} \in \Schwartz(\R^d)$,
  there exists a constant $C_1 = C_1(\varphi,d,\sigma) > 0$ such that
  $|\widehat{\varphi}(\xi)| \leq C_1 \cdot (1 + |\xi|)^{-(d+1+\sigma)}$ for all $\xi \in \R^d$.
  Furthermore, note that if $\ell \in \Z^d$ and $\eta \in [0,1]^d$, then
  \(
    1 + |\ell|
    \leq 1 + |\ell + \eta| + |-\eta|
    \leq 1 + \sqrt{d} + |\ell + \eta|
    \leq (1 + \sqrt{d}) \cdot (1 + |\ell + \eta|)
    ,
  \)
  and thus $(1 + |\ell+\eta|)^{-(d+1)} \leq C_2 \cdot (1 + |\ell|)^{-(d+1)}$
  for $C_2 := (1 + \sqrt{d})^{d+1}$.
  Therefore, we see
  \[
    \sum_{\ell \in \Z^d}
      (1 + |\ell + \eta|)^{-(d+1)}
    \leq C_2 \cdot \sum_{\ell \in \Z^d} (1 + |\ell|)^{-(d+1)}
    =:   C_3
    \qquad \forall \, \eta \in [0,1]^d ,
  \]
  where $C_3 = C_3(d) < \infty$.

  Combining these observations, we see
  \begin{align*}
    & \int_{\R^d}
        (1 + |\xi|)^\sigma \cdot |\Fourier \psi_{N,\Lambda,\theta}(\xi)|
      \, d \xi \\
    & = \int_{\R^d}
          N^{-d}
          \cdot \bigl(1 + N \cdot |\tfrac{\xi}{N}|\bigr)^\sigma
          \cdot |\widehat{\varphi} (\tfrac{\xi}{N})|
          \cdot |g_{\Lambda,\theta} (\tfrac{\xi}{N})|
        \, d \xi \\
    & = \int_{\R^d}
          (1 + N \cdot |\eta|)^\sigma
          \cdot |\widehat{\varphi}(\eta)|
          \cdot |g_{\Lambda,\theta}(\eta)|
        \, d \eta \\
    & \leq C_1 \cdot N^\sigma
           \cdot \int_{\R^d}
                   (1 + |\eta|)^{-(d+1)} \cdot |g_{\Lambda,\theta}(\eta)|
                 \, d \eta \\
    & =    C_1 \cdot N^\sigma
           \cdot \sum_{\ell \in \Z^d}
                 \int_{[0,1]^d}
                   (1 + |\eta + \ell|)^{-(d+1)} \cdot |g_{\Lambda,\theta}(\eta + \ell)|
                 \, d \eta \\
    ({\scriptstyle{g_{\Lambda,\theta} \text{ is $\Z^d$-periodic}}})
    & =    C_1 \cdot N^\sigma
           \int_{[0,1]^d}
             |g_{\Lambda,\theta}(\eta)|
             \sum_{\ell \in \Z^d}
               (1 + |\eta + \ell|)^{-(d+1)}
           \, d \eta \\
    & \leq C_1 C_3 \cdot N^\sigma \cdot \| g_{\Lambda,\theta} \|_{L^1([0,1]^d)} \\
    & \leq C_1 C_3 \cdot N^\sigma \cdot \| g_{\Lambda,\theta} \|_{L^2([0,1]^d)} \\
    & =    C_1 C_3 \cdot N^\sigma \cdot \bigg( \sum_{n \in \Lambda} |\theta_n|^{2} \bigg)^{1/2}
      =    C_1 C_3 \cdot N^\sigma \cdot |\Lambda|^{1/2}
    .
  \end{align*}
  Finally, by Fourier inversion (which is valid since
  $\psi_{N,\Lambda,\theta} \in C_c^\infty (\R^d) \subset \Schwartz(\R^d)$), we see
  \[
    \psi_{N,\Lambda,\theta} (x)
    = \Fourier^{-1} [\Fourier \psi_{N,\Lambda,\theta}](x)
    = \int_{\R^d}
         \Fourier \psi_{N,\Lambda,\theta} (\xi)
         e^{2 \pi i \langle x,\xi \rangle}
      \, d \xi
    .
  \]
  By definition of the Barron norm, this implies
  $\| \psi_{N,\Lambda,\theta} \|_{B^\sigma(\Omega)} \leq C_1 C_3 \cdot N^\sigma \cdot |\Lambda|^{1/2}$,
  which implies the claim.
\end{proof}

\subsection{Proof of Theorem~\ref{thm:LowerBound}}%
\label{sub:LowerBoundProof}

We remark that the employed proof idea (deriving lower bounds for randomized algorithms
using lower bounds for a suitably chosen average-case error of deterministic algorithms)
is well-known in the information-based complexity community;
see for instance \cite{HeinrichRandomApproximation}.

Let $d \in \N$, $\sigma \in (0,\infty)$ and $p \in [1,\infty]$ be arbitrary.

\noindent
\textbf{Step~1 (Preparation):}
Fix $\varphi \in C_c^\infty(\R^d) \strut$ with $\supp \varphi \subset (0,1)^d$ and $\varphi \not\equiv 0$.
Let $m \in \N$ and choose $N := \big\lceil (4 m)^{1/d} \big\rceil \strut$, and
\[
  k := \begin{cases}
         1,   & \text{if } p > 2, \\
         N^d, & \text{if } p \leq 2 ,
       \end{cases}
\]
and set $\Lambda_N^\ast := \{ 0,1,\dots,N-1 \}^d$ and
$S_k := \{ \Lambda \subset \Lambda_N^\ast \colon |\Lambda| = k \}$.
For $\Lambda \in S_k$ and $\theta \in \{ \pm 1 \}^{\Lambda_N^\ast}$, define
\[
  \gamma_{\Lambda,\theta}
  := \frac{1}{C_1 \cdot N^\sigma \cdot k^{1/2}} \cdot \psi_{N,\Lambda,\theta}
  ,
\]
with $C_1 = C_1(d,\sigma,\varphi) > 0$ and $\psi_{N,\Lambda,\theta}$
as in \Cref{lem:BumpSumNormEstimate}.
By that lemma, we have $\| \gamma_{\Lambda,\theta} \|_{B^\sigma(\Omega)} \leq 1$.

\medskip{}

\noindent
\textbf{Step~2 (Average-case error bound for deterministic algorithms):}
Let $x_1,\dots,x_{2m} \in \Omega$ and $R : \R^{2m} \!\!\to\! L^p(\Omega)$ be arbitrary
but fixed, and let $A \!:=\! R \circ \Phi$ for $\Phi \!:\! C(\Omega) \!\to\! \R^{2m}$ given by
$\Phi(f) = (f(x_1),\dots,f(x_{2m}))$.
Define
\[
  I := \big\{
         n \in \{ 0,\dots,N-1 \}^d
         \colon
         \forall \, i \in \FirstN{2m} : x_i \notin \Omega_n
       \big\}
  ,
\]
where we set
\[
  \Omega_n
  := (0, \tfrac{1}{N})^d + \tfrac{n}{N} - (\tfrac{1}{2},\dots,\tfrac{1}{2})
  \quad \text{for} \quad
  n \in \{ 0,1,\dots,N-1 \}^d
  .
\]
Note for each $n \in I^c = \Lambda_N^\ast \setminus I$ that there exists $i_n \in \FirstN{2m}$
such that $x_{i_n} \in \Omega_n$.
Since $\Omega_n \cap \Omega_m = \emptyset$ for $n \neq m$,
the map $I^c \to \FirstN{2m}, n \mapsto i_n$ is injective,
which implies that $|I^c| \leq 2 m$, and thus
\begin{equation}
  |I|
  \geq N^d - 2 m
  \geq N^d - \frac{N^d}{2}
  =    \frac{1}{2} N^d
  =    \frac{1}{2} |\Lambda_N^\ast|
  .
  \label{eq:ICardinalityLowerBound}
\end{equation}

Next, define
\[
  \iota : \quad
  \{ \pm 1 \}^{\Lambda_N^\ast} \to \{ \pm 1 \}^{\Lambda_N^\ast}, \quad
  (\iota \theta)_n = \begin{cases}
                       - \theta_n , & \text{if } n \in I, \\
                       + \theta_n , & \text{if } n \notin I.
                     \end{cases}
\]
Note that $\iota (\iota \theta) = \theta$, which implies that $\iota$ is bijective.
Next, note that
\[
  \varphi_n
  := \varphi \big( N \cdot (\bullet + (\tfrac{1}{2},\dots,\tfrac{1}{2}) - \tfrac{n}{N}) \big)
\]
satisfies $\supp \varphi_n \subset \Omega_n$ and hence $\varphi_n (x_i) = 0$ for all $i \in \FirstN{2m}$,
provided that $n \in I$.
Therefore, $\theta_n \cdot \varphi_n (x_i) = (\iota \theta)_n \cdot \varphi_n (x_i)$
for all $n \in \Lambda_N^\ast$ and $i \in \FirstN{2m}$, and thus
\[
  \Phi(\gamma_{\Lambda,\theta}) = \Phi(\gamma_{\Lambda,\iota \theta})
  \qquad \forall \, \Lambda \in S_k \text{ and } \theta \in \{ \pm 1 \}^{\Lambda_N^\ast}
  .
\]
Therefore, setting
\[
  \eps_{\Lambda,\theta}
  := \big\| \gamma_{\Lambda,\theta} - R (\Phi(\gamma_{\Lambda,\theta})) \big\|_{L^p}
  ,
\]
we see for any $\Lambda \in S_k$ that
\begin{align*}
  \eps_{\Lambda,\theta} + \eps_{\Lambda,\iota \theta}
  & = \big\| \gamma_{\Lambda,\theta} - R(\Phi(\gamma_{\Lambda,\theta})) \big\|_{L^p}
      + \big\| R(\Phi(\gamma_{\Lambda,\theta})) - \gamma_{\Lambda, \iota \theta} \big\|_{L^p} \\
  & \geq \| \gamma_{\Lambda,\theta} - \gamma_{\Lambda, \iota \theta} \|_{L^p} \\
  & =    \frac{1}{C_1 \cdot N^\sigma \cdot k^{1/2}}
         \cdot \bigg\|
                 \sum_{n \in \Lambda \cap I}
                   (\theta_n - (-\theta_n)) \, \varphi_n
               \bigg\|_{L^p(\Omega)}
  .
\end{align*}
Now, since the $(\varphi_n)_{n \in \Lambda_N^\ast}$ have disjoint supports
contained i n $\Omega$ and since
\(
  \| \varphi_n \|_{L^p}
  = \| \varphi(N \bullet) \|_{L^p}
  = \| \varphi \|_{L^p} \cdot N^{-d/p}
  ,
\)
it is easy to see
\begin{align*}
  \bigg\|
    \sum_{n \in \Lambda \cap I}
      (\theta_n - (-\theta_n)) \, \varphi_n
  \bigg\|_{L^p}
  & = \| \varphi \|_{L^p}
      \cdot N^{-d/p}
      \cdot \| (2 \, \theta_n)_{n \in \Lambda \cap I} \|_{\ell^p} \\
  & = 2 \, \| \varphi \|_{L^p}
      \cdot N^{-d/p}
      \cdot |\Lambda \cap I|^{1/p}
  ,
\end{align*}
where for $p = \infty$ we use the convention $0^0 = 0$.
Since $\iota : \{ \pm 1 \}^{\Lambda_N^\ast} \to \{ \pm 1 \}^{\Lambda_N^\ast}$ is bijective,
this implies
\begin{align*}
  \avsum_{\theta \in \{ \pm 1 \}^{\Lambda_N^\ast}}
    \eps_{\Lambda,\theta}
  & = \frac{1}{|\{ \pm 1 \}^{\Lambda_N^\ast}|}
        \sum_{\theta \in \{ \pm 1 \}^{\Lambda_N^\ast}}
          \eps_{\Lambda,\theta}
    = \frac{1}{2 \, |\{ \pm 1 \}^{\Lambda_N^\ast}|}
      \sum_{\theta \in \{ \pm 1 \}^{\Lambda_N^\ast}}
        (\eps_{\Lambda,\theta} + \eps_{\Lambda, \iota \theta}) \\
  & \geq \frac{1}{|\{ \pm 1 \}^{\Lambda_N^\ast}|}
         \sum_{\theta \in \{ \pm 1 \}^{\Lambda_N^\ast}}
           \frac{\| \varphi \|_{L^p}}{C_1 \cdot N^{\sigma + d/p}
           \cdot k^{1/2}}
           \cdot |\Lambda \cap I|^{1/p} \\
  & =     \frac{\| \varphi \|_{L^p}}{C_1}
          \cdot N^{-(\sigma + d/p)}
          \cdot k^{-1/2}
          \cdot |\Lambda \cap I|^{1/p}
  .
\end{align*}

Now, in case of $p > 2$, we have $k = 1$, so that for $\Lambda \in S_k$ (meaning $|\Lambda| = k = 1$),
we have $|\Lambda \cap I| \in \{ 0,1 \}$ and thus $|\Lambda \cap I|^{1/p} = |\Lambda \cap I|$
(also for $p = \infty$).
Therefore, in case of $p > 2$, we see thanks to \Cref{eq:ICardinalityLowerBound} that
\[
  \sum_{\Lambda \in S_k}
    |\Lambda \cap I|^{1/p}
  = \sum_{\Lambda \in S_k}
      |\Lambda \cap I|
  = \sum_{n \in \Lambda_N^\ast}
      |\{ n \} \cap I|
  = |I|
  \geq \frac{1}{2} |\Lambda_N^\ast|
  =    \frac{1}{2} |S_k|
  ,
\]
and thus $\avsum_{\Lambda \in S_k} |\Lambda \cap I|^{1/p} \geq \frac{1}{2}$.

If otherwise $p \leq 2$, then $k = N^d$, so that $S_k = \{ \Lambda_N^\ast \}$, and hence
\[
  \avsum_{\Lambda \in S_k} |\Lambda \cap I|^{1/p}
  = |I|^{1/p}
  \geq (N^d / 2)^{1/p}
  \geq \frac{1}{2} N^{d/p}
  ,
\]
again thanks to \Cref{eq:ICardinalityLowerBound}.
Overall, these considerations show that
\begin{align*}
  & \avsum_{\Lambda \in S_k, \theta \in \{ \pm 1 \}^{\Lambda_N^\ast}}
      \| \gamma_{\Lambda,\theta} - R(\Phi(\gamma_{\Lambda,\theta})) \|_{L^p} \\
  & = \avsum_{\Lambda \in S_k, \theta \in \{ \pm 1 \}^{\Lambda_N^\ast}}
        \eps_{\Lambda,\theta} \\
  & \geq 2 C_2 \cdot N^{-(\sigma + d/p)} \cdot k^{-1/2}
         \avsum_{\Lambda \in S_k}
           |\Lambda \cap I|^{1/p} \\
  & \geq \begin{cases}
           C_2 \cdot N^{-(\sigma + d/p)},                                                        & \text{if } p > 2, \\
           C_2 \cdot N^{-(\sigma + d/p)} \cdot N^{-d/2} N^{d/p} = C_2 \cdot N^{-(\sigma + d/2)}, & \text{if } p \leq 2 ,
         \end{cases}
\end{align*}
for $C_2 := \frac{\| \varphi \|_{L^p}}{2 \, C_1}$.

Overall, since $N \leq 1 + (4m)^{1/d} \leq 2 \cdot (4m)^{1/d} \leq 8 \, m^{1/d}$,
we have thus shown for every $A \in \Alg^{\mathrm{det}}_{2m} (L^p)$ that
\begin{equation}
  \avsum_{\Lambda \in S_k, \theta \in \{ \pm 1 \}^{\Lambda_N^\ast}}
    \| \gamma_{\Lambda,\theta} - A(\gamma_{\Lambda,\theta}) \|_{L^p}
  \geq \begin{cases}
         C_3 \cdot m^{-(1/p + \sigma/d)}, & \text{if } p > 2, \\
         C_3 \cdot m^{-(1/2 + \sigma/d)}, & \text{if } p \leq 2 ,
       \end{cases}
  \label{eq:DeterministicAverageCaseLowerBound}
\end{equation}
where $C_3 := 8^{-(\sigma + d/p)} \, C_2$ for $p > 2$ and $C_3 := 8^{-(\sigma + d/2)}$ for $p \leq 2$.

\medskip{}

\noindent
\textbf{Step~3 (Completing the proof):}
Let $\mathbf{A} = (A_\omega)_{\omega \in \mathbf{X}} \in \Alg^{\mathrm{MC}}_m (L^p)$ be arbitrary.
This means that there exist a probability space $(\mathbf{X}, \PP)$ and a measurable map
$\mathbf{m} : \mathbf{X} \to \N$ such that $\mathbf{A} = (A_\omega)_{\omega \in \mathbf{X}}$
with $A_\omega \in \Alg^{\mathrm{det}}_{\mathbf{m}(\omega)} (L^p)$ for all $\omega \in \mathbf{X}$
and such that $\EE [\mathbf{m}(\omega)] \leq m$.

Let $\mathbf{X}_0 := \{ \omega \in \mathbf{X} \colon \mathbf{m}(\omega) \leq 2 m \}$ and note
\[
  m
  \geq \EE [\mathbf{m}(\omega)]
  \geq \EE [2m \, \indicator_{\mathbf{X}_0^c}(\omega)]
  =    2m \cdot \PP(\mathbf{X}_0^c)
  ,
\]
which implies that $\PP(\mathbf{X}_0^c) \leq \frac{1}{2}$
and hence $\PP(\mathbf{X}_0) \geq \frac{1}{2}$.

Thus, we see from the definition of $E_p(\mathbf{A}; B^\sigma(\Omega))$,
because of $\| \gamma_{\Lambda,\theta} \|_{B^\sigma} \leq 1$,
and since $A_\omega \in \Alg_m^{\mathrm{det}}(L^p)$ for $\omega \in \mathbf{X}_0$,
so that \Cref{eq:DeterministicAverageCaseLowerBound} is applicable,
\begin{align*}
  E_p (\mathbf{A}; B^\sigma(\Omega))
  & = \sup_{\| f \|_{B^\sigma(\Omega)} \leq 1}
        \EE \| f - A_\omega(f) \|_{L^p} \\
  & \geq \avsum_{\Lambda \in S_k, \theta \in \{ \pm 1 \}^{\Lambda_N^\ast}}
           \EE \| \gamma_{\Lambda,\theta} - A_\omega(\gamma_{\Lambda,\theta}) \|_{L^p} \\
  & =    \EE
         \bigg[
           \avsum_{\Lambda \in S_k, \theta \in \{ \pm 1 \}^{\Lambda_N^\ast}}
             \| \gamma_{\Lambda,\theta} - A_\omega(\gamma_{\Lambda,\theta}) \|_{L^p}
         \bigg] \\
  & \geq \EE
         \bigg[
           \indicator_{\mathbf{X}_0}(\omega)
           \avsum_{\Lambda \in S_k, \theta \in \{ \pm 1 \}^{\Lambda_N^\ast}}
             \| \gamma_{\Lambda,\theta} - A_\omega(\gamma_{\Lambda,\theta}) \|_{L^p}
         \bigg] \\
  & \geq \EE
         \big[
           \indicator_{\mathbf{X}_0}(\omega)
           \cdot C_3
           \cdot m^{-(\sigma/d + 1 / \max\{ 2,p \})}
         \big] \\
  & \geq \frac{C_3}{2} \cdot m^{-(\sigma/d + 1 / \max \{ 2,p \})}
  .
\end{align*}
This completes the proof of \Cref{thm:LowerBound}.
\hfill$\square$

\section{Lower bounds for linear sampling numbers}%
\label{sec:LinearSamplingNumbersLowerBound}

In this section, we show based on observations and techniques
in \cite[Section~X]{BarronUniversalApproximationBoundsForSigmoidal} that the
\emph{linear} sampling numbers are subject to the curse of dimension,
in stark contrast to the case of the (general, non-linear) sampling numbers
that we studied in the previous sections.
In fact, for \emph{deterministic} linear reconstruction based on point samples,
the desired lower bound is an easy consequence of
\cite[Theorem~6]{BarronUniversalApproximationBoundsForSigmoidal};
but since we want to show that the same also holds for potentially randomized linear
reconstruction, we need to delve into the proof techniques
employed in \cite{BarronUniversalApproximationBoundsForSigmoidal}.

\begin{remark*}
  We remark that we derive these lower bounds only for the case where the reconstruction
  error is measured in $L^p$ with $p \geq 2$.
  We leave the case of $1 \leq p < 2$ as an interesting problem for future work.
\end{remark*}

Before we properly begin, let us define what we mean by ``randomized linear reconstruction''.

\begin{definition}\label{def:LinearAlgorithmsBasedOnPointSamples}
  Let $\emptyset \neq \Omega \subset \R^d$ be arbitrary and $p \in [1,\infty]$.
  A deterministic ($L^p$-valued) \emph{linear} algorithm based on $m$ point samples
  is any map of the form
  \[
    A = T \circ \Phi ,
  \]
  where $T : \R^m \to L^p(\Omega)$ is \emph{linear} and where $\Phi : C(\Omega) \to \R^m$
  is of the form $\Phi (f) = \big( f(x_1),\dots,f(x_m) \big)$ for certain $x_1,\dots,x_m \in \Omega$.
  We write $\Alg_m^{\mathrm{lin}, \mathrm{det}}(L^p)$ for the set of all these algorithms.

  A randomized ($L^p$-valued) \emph{linear} algorithm using at most $m$ point samples in expectation
  is a family $\mathbf{A} = (A_\omega)_{\omega \in \Omega}$ indexed by a probability space
  $(\mathbf{X}, \PP)$ such that the following properties hold:
  \begin{enumerate}
    \item There exists a measurable map $\m : \X \to \N$ satisfying
          \begin{itemize}
            \item $A_\omega \in \Alg_{\m (\omega)}^{\mathrm{lin}, \mathrm{det}}(L^p)$
                  for all $\omega \in \X$, and
            \item $\EE [\m (\omega)] \leq m$.
          \end{itemize}

    \item The map $\omega \mapsto \| f - A_\omega (f) \|_{L^p}$ is measurable for every
          $f \in B^{\sigma}(\Omega)$ with $\| f \|_{B^\sigma (\Omega)} \leq 1$.
  \end{enumerate}
\end{definition}

\begin{remark*}
  Since $\Alg_m^{\mathrm{lin},\mathrm{det}}(L^p) \subset \Alg_m^{\mathrm{det}}(L^p)$
  and $\Alg_m^{\mathrm{lin}, \mathrm{MC}} \subset \Alg_m^{\mathrm{MC}}(L^p)$,
  the worst-case errors $E_p(A; B^{\sigma}(\Omega))$ and $E_p (\mathbf{A}; B^\sigma(\Omega))$
  are already defined by virtue of \Cref{def:AlgorithmsBasedOnPointSamples}.
\end{remark*}

We now aim to prove that linear algorithms (deterministic or randomized) for reconstructing
Barron functions from point samples are subject to the curse of dimensionality.
For doing so, we start with the following lemma.

\begin{lemma}\label{lem:CosineFunctionsAreBarron}
  Let $d \in \N$ and $\sigma > 0$ be arbitrary and let $\Omega = [-\frac{1}{2}, \frac{1}{2}]^d$.
  For $\eta \in \N_0^d$, define
  \[
    f_\eta : \quad
    \Omega \to \R, \quad
    f_\eta(x) = \begin{cases}
                    1,                                                 & \text{if } \eta = 0, \\
                    \sqrt{2} \, \cos(2 \pi \langle \eta, x \rangle),   & \text{otherwise} .
                  \end{cases}
  \]
  Then the family $(f_\eta)_{\eta \in \N_0^d} \subset L^2(\Omega)$ is an orthonormal system.

  Furthermore, for every $\eta \in \N_0^d$, we have
  \[
    \| f_\eta \|_{B^\sigma(\Omega)}
    \leq C \cdot (1 + |\eta|)^{\sigma}
  \]
  for a constant $C = C(d, \sigma) > 0$.
\end{lemma}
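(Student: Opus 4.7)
The plan has two independent parts corresponding to the two claims. For the orthonormality, I would just appeal to the classical fact that $\int_{[-1/2,1/2]^d} e^{2\pi i \langle k,x\rangle}\,dx = \delta_{k,0}$ for $k\in\Z^d$, and expand each cosine via $\cos\theta = \tfrac12(e^{i\theta}+e^{-i\theta})$. Pairwise integrals $\int f_\eta f_{\eta'}\,dx$ reduce to integrals of $e^{2\pi i \langle k,x\rangle}$ with $k = \pm\eta \pm \eta'$, and one checks case-by-case (using that $\eta,\eta' \in \N_0^d$ so $\eta+\eta' = 0$ forces both to be $0$) that all relevant $k$ vanish only on the diagonal $\eta=\eta'$, at which point the $\sqrt 2$ normalization gives exactly $1$.

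For the Barron-norm bound, the obstacle is that the ``natural'' Fourier data $F$ for $\cos(2\pi\langle\eta,x\rangle)$ are a pair of Dirac masses at $\pm\eta$, which are not allowed in \Cref{def:BarronFunctions}. I would sidestep this by smoothing via a fixed cutoff: pick once and for all a Schwartz function $\varphi \in \Schwartz(\R^d)$ with $\varphi \equiv 1$ on $\Omega = [-\tfrac12,\tfrac12]^d$, and define
\[
  F_\eta(\xi) := \tfrac{\sqrt{2}}{2}\bigl(\widehat{\varphi}(\xi-\eta) + \widehat{\varphi}(\xi+\eta)\bigr)
  \qquad (\eta \neq 0),
  \qquad F_0(\xi) := \widehat{\varphi}(\xi).
\]
Using the Fourier inversion identity $\int_{\R^d} \widehat{\varphi}(\xi-\eta)\, e^{2\pi i\langle x,\xi\rangle}\,d\xi = \varphi(x)\, e^{2\pi i \langle \eta,x\rangle}$, one checks that $\int F_\eta(\xi) e^{2\pi i \langle x,\xi\rangle}\,d\xi = \varphi(x) \cdot \sqrt{2}\cos(2\pi\langle\eta,x\rangle) = f_\eta(x)$ for all $x \in \Omega$ (and likewise $=1$ in the $\eta=0$ case), so $F_\eta$ is a legitimate Barron representer of $f_\eta$.

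It remains to estimate $\int (1+|\xi|)^\sigma |F_\eta(\xi)|\,d\xi$. By the change of variables $\xi \mapsto \xi \pm \eta$ and the submultiplicativity inequality $1 + |\xi' + \eta| \le (1+|\xi'|)(1+|\eta|)$ (raised to the $\sigma$-th power, exactly as in \Cref{eq:WeightSubmultiplicative}), each summand is bounded by
\[
  (1+|\eta|)^\sigma \int_{\R^d} (1+|\xi'|)^\sigma\, |\widehat{\varphi}(\xi')|\,d\xi',
\]
and this last integral is a finite constant $C_\varphi = C(d,\sigma)$ because $\widehat\varphi \in \Schwartz(\R^d)$. Summing the two terms and taking the infimum defining $\|\cdot\|_{B^\sigma(\Omega)}$ yields $\|f_\eta\|_{B^\sigma(\Omega)} \le \sqrt{2}\, C_\varphi \cdot (1+|\eta|)^\sigma$, with the $\eta=0$ case handled by the bound $C_\varphi$ which is also $\le C_\varphi(1+|0|)^\sigma$. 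The only mild subtlety is to fix $\varphi$ before the argument starts, so that the constant $C$ depends on $d$ and $\sigma$ only, as claimed.
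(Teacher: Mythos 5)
Your proof is correct and uses essentially the same approach as the paper: expand the cosines in exponentials and appeal to orthogonality of $e^{2\pi i\langle k,\cdot\rangle}$ for the orthonormality claim, and for the Barron-norm bound fix a cutoff $\varphi\equiv 1$ on $\Omega$, represent $f_\eta$ via shifts $\widehat\varphi(\cdot\mp\eta)$, and invoke submultiplicativity of $(1+|\cdot|)^\sigma$ together with $\widehat\varphi\in\Schwartz(\R^d)$. The only cosmetic difference is that the paper takes $\varphi\in C_c^\infty$ and phrases the Barron-norm estimate via the localized function $g_\eta=\varphi\cdot\cos(2\pi\langle\eta,\cdot\rangle)$, while you work directly with the explicit representer $F_\eta$; these are the same calculation.
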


\begin{proof}
  First, we note for $\eta \in \Z^d$ that
  $\cos(2 \pi \langle \eta ,x \rangle) = \operatorname{Re} e^{2 \pi i \langle \eta, x \rangle}$
  and thus
  \(
    \int_{[-\frac{1}{2}, \frac{1}{2}]^d}
      \cos(2 \pi \langle \eta, x \rangle)
    \, d x
    = \operatorname{Re}
      \int_{[-\frac{1}{2}, \frac{1}{2}]^d}
        e^{2 \pi i \langle \eta, x \rangle}
      \, d x
    = \delta_{\eta,0}
    .
  \)
  Next, we recall the well-known formula
  $\cos (x) \cos(y) = \frac{1}{2} \big( \cos(x-y) + \cos(x+y) \big)$.
  Using it, we see for $\eta,\zeta \in \N_0^d$ that
  \begin{align*}
    & \int_{[-\frac{1}{2},\frac{1}{2}]^d}
        \cos(2 \pi \langle \eta, x \rangle) \cos(2 \pi \langle \zeta, x \rangle)
      \, d x \\
    & = \frac{1}{2}
        \int_{[-\frac{1}{2},\frac{1}{2}]^d}
           \cos(2 \pi \langle \eta - \zeta, x \rangle) + \cos(2 \pi \langle \eta + \zeta, x \rangle) 
        \, d x \\
    & = \frac{1}{2} (\delta_{\eta - \zeta, 0} + \delta_{\eta + \zeta, 0})
      = \frac{1}{2} (\delta_{\eta,\zeta} + \delta_{\eta,0}\delta_{\zeta,0})
    ,
  \end{align*}
  and thus
  \[
    \int_{[-\frac{1}{2},\frac{1}{2}]^d}
      \cos(2 \pi \langle \eta, x \rangle) \cos(2 \pi \langle \zeta, x \rangle)
    \, d x
    = \begin{cases}
        1,           & \text{if } \eta = \zeta = 0, \\
        \frac{1}{2}, & \text{if } \eta = \zeta \neq 0, \\
        0 ,          & \text{if } \eta \neq \zeta .
      \end{cases}
  \]
  This implies (the well-known fact) that $(f_\eta)_{\eta \in \N_0^d} \subset L^2(\Omega)$
  is an orthonormal system.

  Next, fix $\varphi \in C_c^\infty(\R^d)$ with $\varphi \equiv 1$ on $[-\frac{1}{2}, \frac{1}{2}]^d$
  and with $\varphi \geq 0$, and define $w : \R^d \to (0,\infty)$ by $w(\xi) := (1 + |\xi|)^{\sigma}$.
  As seen in \Cref{eq:WeightSubmultiplicative}, we have $w(\xi + \eta) \leq w(\xi) \cdot w(\eta)$
  for all $\xi, \eta \in \R^d$.
  Note that $g_\eta : \R^d \to \R$ defined by
  $g_\eta (x) = \cos(2 \pi \langle \eta, x \rangle) \cdot \varphi(x)$ satisfies
  \[
    g_\eta (x)
    = \frac{1}{2}
      \bigl(e^{2 \pi i \langle \eta, x \rangle} + e^{-2 \pi i \langle \eta, x \rangle}\bigr)
      \cdot \varphi(x)
    ,
  \]
  which implies that
  \(
    \Fourier g_\eta (\xi)
    = \frac{1}{2} \bigl( \widehat{\varphi} (\xi - \eta) + \widehat{\varphi}(\xi + \eta) \bigr)
    ,
  \)
  by elementary properties of the Fourier transform.
  Therefore,
  \begin{align*}
    & \int_{\R^d}
        w(\xi) \cdot |\Fourier g_\eta (\xi)|
      \, d \xi \\
    & \leq \frac{1}{2}
           \bigg(
             \int_{\R^d}
               |\widehat{\varphi}(\xi - \eta)| \cdot w(\xi - \eta + \eta)
             \, d \xi
             + \int_{\R^d}
                 |\widehat{\varphi}(\xi + \eta)| \cdot w(\xi + \eta - \eta)
               \, d \xi
           \bigg) \\
    & \leq w(\eta) \cdot \| \widehat{\varphi} \|_{L_w^1}
      \leq C_1 \cdot (1 + |\eta|)^{\sigma}
  \end{align*}
  for a suitable constant $C_1 = C_1(d,\sigma) > 0$.
  Since we have by Fourier inversion that $g_\eta = \Fourier^{-1} \Fourier g_\eta$
  and since $g_\eta (x) = \cos(2 \pi \langle \eta, x \rangle)$ for $x \in \Omega$,
  we see by definition of the Barron norm that
  \(
    \| \cos(2 \pi \langle \eta, \bullet \rangle) \|_{B^\sigma(\Omega)}
    \leq C_1 \cdot (1 + |\eta|)^{\sigma}.
  \)
  This easily implies that $\| f_\eta \|_{B^{\sigma}} \leq C_2 \cdot (1 + |\eta|)^{\sigma}$
  for a suitable constant $C_2 = C_2(d,\sigma) > 0$.
\end{proof}

\begin{theorem}\label{thm:LinearLowerBound}
  Let $d \in \N$ and $\sigma > 0$ be arbitrary and set $\Omega := [-\frac{1}{2}, \frac{1}{2}]^d$.
  Then there exists a constant $C = C(d,\sigma) > 0$ such that for every
  $p \in [2,\infty]$ and every $\mathbf{A} \in \Alg_m^{\mathrm{lin}, \mathrm{MC}}(L^p)$, we have
  \[
    E_p (\mathbf{A}; B^{\sigma}(\Omega))
    \geq C \cdot m^{-\sigma / d}
    .
  \]
\end{theorem}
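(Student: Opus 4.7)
The plan is to combine a Bessel-style dimension count for the orthonormal cosine family $(f_\eta)_{\eta \in \N_0^d}$ supplied by \Cref{lem:CosineFunctionsAreBarron} with Yao's principle and a Markov argument on $\mathbf{m}(\omega)$. Concretely, set $N := \lceil (8m)^{1/d} \rceil$, $F_N := \{\eta \in \N_0^d \colon |\eta|_{\infty} \leq N\}$ (so that $|F_N| \geq 8m$), and $g_\eta := f_\eta / (C_1 \cdot (1+|\eta|)^\sigma)$, where $C_1 = C_1(d,\sigma)$ is the constant from that lemma. Then $\|g_\eta\|_{B^\sigma(\Omega)} \leq 1$ by construction, while $\|g_\eta\|_{L^2(\Omega)} = 1/(C_1 (1+|\eta|)^\sigma) \geq c \cdot m^{-\sigma/d}$ uniformly for $\eta \in F_N$ (since $|\eta| \leq N \sqrt{d} \leq c' m^{1/d}$). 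The goal is to show that for any linear algorithm using at most $2m$ point samples, at most half of the $\eta \in F_N$ can be well approximated, and averaging then yields the claim.

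Fix first a deterministic linear algorithm $A = T \circ \Phi \in \Alg_k^{\mathrm{lin}, \mathrm{det}}(L^p)$ with $k \leq 2m$, and let $W := T(\R^k) \subseteq L^p(\Omega)$, which has dimension at most $k$. Since $\Omega$ has unit measure and $p \in [2,\infty]$, Jensen's inequality gives the embedding $L^p(\Omega) \hookrightarrow L^2(\Omega)$ with $\|\bullet\|_{L^2} \leq \|\bullet\|_{L^p}$, so $W \subseteq L^2(\Omega)$ and one may let $P_W$ denote the $L^2$-orthogonal projection onto $W$. By orthonormality of $(f_\eta)$ and Bessel's inequality,
\[
  \sum_{\eta \in F_N} \| P_W f_\eta \|_{L^2}^2 \leq \dim W \leq k \leq 2m .
\]
Chebyshev's inequality yields $|\{ \eta \in F_N : \| P_W f_\eta \|_{L^2}^2 > 1/2 \}| \leq 4m \leq |F_N|/2$, so for at least half of $\eta \in F_N$ one has $\| P_W g_\eta \|_{L^2}^2 = \| P_W f_\eta \|_{L^2}^2 / (C_1 (1 + |\eta|)^\sigma)^2 \leq \| g_\eta \|_{L^2}^2 / 2$. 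For any such ``good'' $\eta$,
\[
  \| g_\eta - A(g_\eta) \|_{L^p}
  \geq \| g_\eta - A(g_\eta) \|_{L^2}
  \geq \| g_\eta - P_W g_\eta \|_{L^2}
  \geq \| g_\eta \|_{L^2} / \sqrt{2}
  \gtrsim m^{-\sigma/d} ,
\]
and averaging yields $\avsum_{\eta \in F_N} \| g_\eta - A(g_\eta) \|_{L^p} \gtrsim m^{-\sigma/d}$.

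For the randomized case, given $\mathbf{A} = (A_\omega)_{\omega \in \X} \in \Alg_m^{\mathrm{lin}, \mathrm{MC}}(L^p)$, Markov's inequality yields $\PP(\X_0) \geq 1/2$ for $\X_0 := \{ \omega : \m(\omega) \leq 2m \}$ (exactly as in the proof of \Cref{thm:LowerBound}), so on $\X_0$ the algorithm $A_\omega$ is covered by the deterministic bound above with $k = 2m$. Using the measurability hypothesis from \Cref{def:LinearAlgorithmsBasedOnPointSamples} to swap the finite sum and the expectation, we obtain
\[
  E_p(\mathbf{A}; B^\sigma(\Omega))
  \geq \avsum_{\eta \in F_N} \EE \| g_\eta - A_\omega(g_\eta) \|_{L^p}
  = \EE \Bigl[ \avsum_{\eta \in F_N} \| g_\eta - A_\omega(g_\eta) \|_{L^p} \Bigr]
  \geq \EE \bigl[ \indicator_{\X_0}(\omega) \cdot c \, m^{-\sigma/d} \bigr]
  \gtrsim m^{-\sigma/d} ,
\]
which is the claim. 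The step I expect to need the most care is producing the sharp rate $m^{-\sigma/d}$ rather than the weaker $m^{-\sigma/d - 1/2}$ one would get by a naive Rademacher averaging over the bump family of \Cref{lem:BumpSumNormEstimate}: the gain comes from the fact that, for \emph{linear} $A$, one may dominate the $L^2$-error of $A(g_\eta)$ by the error of the $L^2$-projection $P_W$, after which Bessel's inequality quantifies very precisely how little mass the $\leq 2m$-dimensional space $W$ can collect from the orthonormal family $(f_\eta)_{\eta \in F_N}$ of size $\geq 8m$.
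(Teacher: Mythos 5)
Your proof is correct and follows essentially the same route as the paper: both reduce to $p=2$ via $L^p(\Omega)\hookrightarrow L^2(\Omega)$, both use the orthonormal cosine family from \Cref{lem:CosineFunctionsAreBarron} together with Bessel's inequality against the $\le 2m$-dimensional range of the linear reconstruction map, and both handle randomization by the Markov argument $\PP(\m(\omega)\le 2m)\ge 1/2$. The only cosmetic differences are that you normalize each $f_\eta$ individually by $C_1(1+|\eta|)^\sigma$ rather than by a single uniform scalar $\gamma$, and that in the last step you use a Markov/counting argument (``at least half of the $\eta$ have $\|P_W f_\eta\|^2\le 1/2$'') instead of the paper's Cauchy--Schwarz-style $\avsum\|\cdot\|\ge\avsum\|\cdot\|^2$ passage; both yield the same rate with constants depending only on $d$ and $\sigma$.
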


\begin{proof}
  Since $\lebesgue(\Omega) = 1$, it is an easy consequence of Hölder's inequality
  (or Jensen's inequality) that $L^p (\Omega) \hookrightarrow L^2(\Omega)$
  with $\| \bullet \|_{L^2 (\Omega)} \leq \| \bullet \|_{L^p(\Omega)}$;
  therefore, it is enough to consider the case $p = 2$.
  With $f_\eta$, $\eta \in \N_0^d$ as defined in \Cref{lem:CosineFunctionsAreBarron},
  it is an easy consequence of that lemma that there exists a constant $C_1 = C_1(d,\sigma) > 0$
  satisfying $\| f_\eta \|_{B^\sigma(\Omega)} \leq C_1 \cdot (1 + \| \eta \|_{\ell^\infty})^\sigma$
  for all $\eta \in \N_0^d$.
  The remainder of the proof is divided into two steps.

  \medskip{}

  \noindent
  \textbf{Step~1 (average-case lower bound for linear \emph{deterministic} algorithms):}
  Let $m \in \N$ be arbitrary.
  Let $C_2 := 4^{\sigma/d} 2^\sigma C_1$ and $\gamma := C_2 \cdot m^{\sigma / d}$.
  We first claim that
  \[
    I_\gamma
    := \big\{
         \eta \in \N_0^d
         \colon
         C_1 \cdot (1 + \| \eta \|_{\ell^\infty})^\sigma \leq \gamma
       \big\}
  \]
  satisfies $|I_\gamma| \geq 4 m$.
  To see this, note that if $\eta \in \N_0^d \setminus \{ 0 \}$
  satisfies
  \[
    \| \eta \|_{\ell^\infty}
    \leq \big\lfloor \gamma^{1/\sigma} / (2 C_1^{1/\sigma}) \big\rfloor
    ,
  \]
  then
  \[
    C_1 \cdot (1 + \| \eta \|_{\ell^\infty})^\sigma
    \leq C_1 \cdot \bigl( 2 \| \eta \|_{\ell^\infty} \bigr)^\sigma
    \leq C_1 \cdot \bigl( \gamma^{1/\sigma} / C_1^{1/\sigma} \bigr)^\sigma
    =    \gamma
    ,
  \]
  and this also remains true if $\eta = 0$, since $\gamma \geq C_2 \geq C_1$.
  Therefore, we see
  \begin{align*}
    |I_\gamma|
    & \geq \Big|
            \Big\{
              \eta \in \N_0^d
              \colon
              \| \eta \|_{\ell^\infty}
              \leq \big\lfloor \gamma^{1/\sigma} / (2 C_1^{1/\sigma}) \big\rfloor
            \Big\}
           \Big| \\
    & = \Bigl( 1 + 2 \, \big\lfloor \gamma^{1/\sigma} / (2 C_1^{1/\sigma}) \big\rfloor \Bigr)^d \\
    & \geq \Bigl( \gamma^{1/\sigma} \big/ (2 C_1^{1/\sigma}) \Bigr)^d \\
    & =    \frac{C_2^{d/\sigma} \cdot m}{2^d C_1^{d/\sigma}}
      =    4 m ,
  \end{align*}
  as claimed.

  Now, let $A \in \Alg_{2 m}^{\mathrm{lin},\mathrm{det}}(L^2)$ be arbitrary;
  this means that there exists a linear map $T : \R^{2 m} \to L^2(\Omega)$ and a certain map
  $\Phi : C(\Omega) \to \R^{2 m}$ satisfying $A = T \circ \Phi$.
  Since $T : \R^{2 m} \to L^2(\Omega)$ is linear, we have $\dim (\operatorname{range} T) \leq 2 m$,
  so that we can choose orthonormal functions $g_1,\dots,g_{2 m} \in L^2(\Omega)$ satisfying
  $\operatorname{range} (T) \subset \linspan \{ g_1,\dots,g_{2 m} \} =: V$.
  Thus, denoting by $\pi_V$ the orthogonal projection onto $V$,
  we have for every $f \in B^{\sigma}(\Omega)$ that
  \begin{equation}
    \| f - A (f) \|_{L^2(\Omega)}
    = \| f - T(\Phi(f)) \|_{L^2(\Omega)}
    \geq \| f - \pi_V f \|_{L^2(\Omega)}
    .
    \label{eq:LinearErrorLowerBoundedByProjectionError}
  \end{equation}

  Now, note (as in the proof of \cite[Lemma~6]{BarronUniversalApproximationBoundsForSigmoidal})
  because of $|I_\gamma| \geq 4 m$ that
  \begin{align*}
    \avsum_{\eta \in I_\gamma}
      \| \pi_V f_\eta \|_{L^2}^2
    & = \frac{1}{|I_\gamma|}
        \sum_{\eta \in I_\gamma}
          \sum_{j=1}^{2 m}
            \langle f_\eta, g_j \rangle_{L^2(\Omega)} \\
    & = \frac{1}{|I_\gamma|}
        \sum_{j=1}^{2 m}
          \sum_{\eta \in I_\gamma}
            \langle f_\eta, g_j \rangle_{L^2(\Omega)} \\
    & \overset{(\ast)}{\leq} \frac{1}{|I_\gamma|}
                               \sum_{j=1}^{2 m}
                                 \| g_j \|_{L^2(\Omega)}^2
      = \frac{2m}{|I_\gamma|}
      \leq \frac{1}{2}
      .
  \end{align*}
  Here, the step marked with $(\ast)$ is justified by \emph{Bessel's inequality}
  (see e.g.\ \cite[Theorem~5.26]{FollandRA})
  Now, since $\| f_\eta - \pi_V f_\eta \|_{L^2} \leq \| f_\eta \|_{L^2} \leq 1$, this implies
  \begin{equation}
    \begin{split}
      \avsum_{\eta \in I_\gamma}
        \| (f_\eta / \gamma) - A(f_\eta / \gamma) \|_{L^2(\Omega)}
      & \geq \avsum_{\eta \in I_\gamma}
               \| (f_\eta / \gamma) - \pi_V (f_\eta / \gamma) \|_{L^2(\Omega)} \\
      & =    \frac{1}{\gamma}
             \avsum_{\eta \in I_\gamma}
               \| f_\eta - \pi_V f_\eta \|_{L^2(\Omega)} \\
      & \geq \frac{1}{\gamma}
             \avsum_{\eta \in I_\gamma}
               \| f_\eta - \pi_V f_\eta \|_{L^2(\Omega)}^2 \\
      & \overset{(\ast)}{=}
             \frac{1}{\gamma}
             \avsum_{\eta \in I_\gamma}
               \big( \| f_\eta \|_{L^2}^2 - \| \pi_V f_\eta \|_{L^2}^2 \big) \\
      & =    \frac{1}{\gamma}
             \Big( 1 - \avsum_{\eta \in I_\gamma} \| \pi_V f_\eta \|_{L^2}^2 \Big) \\
      & \geq \frac{1}{2 \gamma}
        =    \bigl(2 \, C_2 \cdot m^{\sigma / d}\bigr)^{-1}
        =:   C_3 \cdot m^{-\sigma / d}
        .
    \end{split}
    \label{eq:LinearReconstructionAverageCaseLowerBound}
  \end{equation}
  Here, the step marked with $(\ast)$ used that $f_\eta - \pi_V f_\eta = \pi_{V^\perp} f_\eta$
  is orthogonal to $\pi_V f_\eta$ and thus
  \(
    \| f_\eta \|_{L^2}^2
    = \| (f_\eta - \pi_V f_\eta) + \pi_V f_\eta \|_{L^2}^2
    = \| f_\eta - \pi_V f_\eta \|_{L^2}^2 + \| \pi_V f_\eta \|_{L^2}^2
  \)
  and therefore
  \(
    \| f_\eta - \pi_V f_\eta \|_{L^2}^2
    = \| f_\eta \|_{L^2}^2 - \| \pi_V f_\eta \|_{L^2}^2
    = 1 - \| \pi_V f_\eta \|_{L^2}^2
    .
  \)
  Here, the last step used that $(f_\eta)_{\eta \in \N_0^d}$ is an orthonormal
  system by construction and thus $\| f_\eta \|_{L^2} = 1$; see \Cref{lem:CosineFunctionsAreBarron}.

  Note that \Cref{eq:LinearReconstructionAverageCaseLowerBound}
  holds for all $A \in \Alg_{2 m}^{\mathrm{lin},\mathrm{det}}(L^2)$ and that
  each $h_\eta := f_\eta / \gamma$ satisfies
  \(
    \| h_\eta \|_{B^\sigma(\Omega)}
    = \frac{1}{\gamma} \| f_\eta \|_{B^\sigma(\Omega)}
    \leq \frac{1}{\gamma} C_1 \cdot (1 + \|\eta\|_{\ell^\infty})^{\sigma}
    \leq \frac{1}{\gamma} \gamma
    = 1
  \)
  for all $\eta \in I_\gamma$.

  \medskip{}

  \noindent
  \textbf{Step~2 (completing the proof):}
  The remainder of the proof is similar to Step~3 in the proof of \Cref{thm:LowerBound}.
  Let $\mathbf{A} = (A_\omega)_{\omega \in \mathbf{X}} \in \Alg_m^{\mathrm{lin},\mathrm{MC}}(L^2)$
  be arbitrary.
  This means that there exist a probability space $(\mathbf{X}, \PP)$ and a measurable map
  $\mathbf{m} : \mathbf{X} \to \N$ such that $\mathbf{A} = (A_\omega)_{\omega \in \mathbf{X}}$
  with $A_\omega \in \Alg^{\mathrm{lin},\mathrm{det}}_{\mathbf{m}(\omega)} (L^2)$
  for all $\omega \in \mathbf{X}$ and such that $\EE [\mathbf{m}(\omega)] \leq m$.
  Let $\mathbf{X}_0 := \{ \omega \in \mathbf{X} \colon \mathbf{m}(\omega) \leq 2 m \}$.
  Exactly as in Step~3 of the proof of \Cref{thm:LowerBound},
  one sees that $\PP(\mathbf{X}_0) \geq \frac{1}{2}$.

  Now, recalling the definition of $E_2(\mathbf{A}; B^\sigma(\Omega))$
  and that $\| h_\eta \|_{B^\sigma} \leq 1$ for $\eta \in I_\gamma$,
  and noting that $A_\omega \in \Alg_m^{\mathrm{lin},\mathrm{det}}(L^2)$ for $\omega \in \mathbf{X}_0$,
  so that \Cref{eq:LinearReconstructionAverageCaseLowerBound} is applicable, we see
  \begin{align*}
    E_2 (\mathbf{A}; B^\sigma(\Omega))
    & = \sup_{\| f \|_{B^\sigma(\Omega)} \leq 1}
          \EE \| f - A_\omega(f) \|_{L^2} \\
    & \geq \avsum_{\eta \in I_\gamma}
             \EE \| h_\eta - A_\omega(h_\eta) \|_{L^2} \\
    & =    \EE
           \bigg[
             \avsum_{\eta \in I_\gamma}
               \| h_\eta - A_\omega(h_\eta) \|_{L^2}
           \bigg] \\
    & \geq \EE
           \bigg[
             \indicator_{\mathbf{X}_0}(\omega)
             \avsum_{\eta \in I_\gamma}
               \| h_\eta - A_\omega(h_\eta) \|_{L^2}
           \bigg] \\
    & \geq \EE
           \big[
             \indicator_{\mathbf{X}_0}(\omega)
             \cdot C_3
             \cdot m^{-\sigma / d}
           \big] \\
    & \geq \frac{C_3}{2} \cdot m^{-\sigma / d}
    .
  \end{align*}
  For $C := C_3 / 2$, this implies the claimed lower bound, since we noted at the beginning
  of the proof that it is enough to treat the case $p = 2$.
\end{proof}

\appendix

\section{Postponed technical proofs}
\label{sec:Appendix}

\subsection{Proof of Lemma \ref{lem:ReconstructionOnWholeSpace}}%
\label{sub:ReconstructionOnWholeSpaceProof}

\begin{proof}
  For brevity, set $\eps := s_m (V; R)$.
  By definition, there exist $x_1,\dots,x_m \in \Omega$ and a map
  $T : \R^m \to \R$ such that $\| f - T(f(x_1),\dots,f(x_m)) \|_R \leq \eps + \delta$
  for all $f \in V$ with $\| f \|_V \leq 1$.
  Define $\Phi : V \to \R^m, f \mapsto (f(x_1),\dots,f(x_m))$.
  For each $y \in \Phi(V)$, we can choose some $g_y \in V$ satisfying
  \[
    \Phi(g_y) = y
    \qquad \text{and} \qquad
    \| g_y \|_V \leq (1 + \delta) \, \inf_{g \in V, \Phi(g) = y} \| g \|_V
    ;
  \]
  indeed, this is clear in case the infimum is positive; otherwise,
  there exists a sequence $(g_n)_{n \in \N}$ satisfying $\Phi(g_n) = y$ and $\| g_n \|_V \to 0$.
  Since $V \hookrightarrow C(\Omega)$, this implies $g_n (x_i) \xrightarrow[n\to\infty]{} 0$
  and thus $y = \lim_{n \to \infty} \Phi(g_n) = 0$, so that $g_y = 0$ is a valid choice.
  Define
  \[
    A : \quad
    \R^m \to V, \quad
    y \mapsto \begin{cases}
                g_y, & \text{if } y \in \Phi(V), \\
                0,   & \text{otherwise} .
              \end{cases}
  \]

  Now, let $f \in V$ be arbitrary and set $y := \Phi(f)$ and $C := \| f \|_V$.
  This implies
  \[
    \| g_y \|_V
    \leq (1 + \delta) \, \inf_{g \in V, \Phi(g) = y} \| g \|_V
    \leq (1 + \delta) \, \| f \|_V
    =    (1 + \delta) \, C
    .
  \]
  In case of $C = 0$, we thus have $f = g_y = 0$ and thus $\| f - A(y) \|_R = \| f - g_y \|_R = 0$,
  so that the claim is trivial; we can thus assume in what follows that $C > 0$.

  Now, set $h := \frac{f - g_y}{(2 + \delta) C}$ and note $\| h \|_{V} \leq 1$ and $\Phi(h) = 0$.
  Therefore, we see by choice of $T$ that
  \[
    \| h \|_R
    \leq \| h - T(0) \|_R + \| T(0) - 0 \|_R
    \leq 2 (\eps + \delta)
    ,
  \]
  and thus
  \begin{align*}
    \| f - A(f(x_1),\dots,f(x_m)) \|_R
    & = \| f - g_y \|_R \\
    & = (2 + \delta) C \cdot \| h \|_R \\
    & \leq 2 (\eps + \delta) \cdot (2 + \delta) \cdot C \\
    & \leq 2 (\eps + \delta) \cdot (2 + \delta) \cdot \| f \|_V
    .
  \end{align*}
  Since $\delta > 0$ can be chosen arbitrarily, this easily implies the claim.
\end{proof}

\section*{Acknowledgments}

F.\ Voigtlaender warmly thanks Jan Vybiral for helpful discussions.
Big thanks are also due to Andrei Caragea and Dae Gwan Lee,
for several very helpful discussions and for help in proofreading the paper.

F.\ Voigtlaender acknowledges support by the German Research Foundation (DFG)
in the context of the Emmy Noether junior research group VO 2594/1--1.

\footnotesize
\bibliographystyle{plain}
\bibliography{references.bib}

\end{document}